\renewcommand{\le}{\leqslant}
\renewcommand{\ge}{\geqslant}
\numberwithin{equation}{section}
\DeclareMathOperator{\Dom}{Dom}
\DeclareMathOperator{\Ran}{Ran}
\DeclareMathOperator{\spec}{spec}
\titleformat{\part}[hang]{ \large\bfseries}{Глава~\thepart.}{0.5ex}{\centering}[]
\titleformat{\section}[hang]{\large\bfseries}{\thesection.}{0.5ex}{}[]
\titleformat{\subsection}[runin]{\bfseries}{\thesubsection.}{0.5ex}{}[]
\theoremstyle{plain}
\newtheorem{thrm}{\scshape Theorem}
\newtheorem{lemma}[thrm]{\scshape Lemma}
\newtheorem{remark}[thrm]{\scshape Remark}
\newtheorem{condition}[thrm]{\scshape Condition}
\theoremstyle{definition}
\newtheoremstyle{break}
{}{}
{\itshape}{}
{\bfseries}{}
{\newline}{}
\theoremstyle{break}
\theoremstyle{break}
\providecommand{\keywords}[1]{\textbf{{Keywords:}} #1.}
\title{High-frequency homogenization of nonstationary \\ periodic equations\footnote{Supported by Young Russian Mathematics award and Ministry of Science and Higher Education of the Russian Federation, agreement \textnumero\,075-15-2019-1619.}}
\date{}
\author{Mark Dorodnyi\footnote{Leonhard Euler International Mathematical Institute, St.~Petersburg State University, 14th Line V.O., 29B, St.~Petersburg, 199178, Russia; e-mail: \texttt{mdorodni@yandex.ru}.}}
\begin{document}

\clubpenalty = 10000
\widowpenalty = 10000

\hyphenation{мат-ри-чно-знач-ная}
\hyphenation{пе-рио-ди-чес-кие}
\hyphenation{пе-рио-ди-чес-кую}
\hyphenation{пе-рио-ди-чес-кое}

\maketitle

\begin{abstract}
	\noindent We consider an elliptic differential operator $A_\varepsilon = - \frac{d}{dx} g(x/\varepsilon) \frac{d}{dx} + \varepsilon^{-2} V(x/\varepsilon)$, $\varepsilon > 0$, with periodic coefficients acting in $L_2(\mathbb{R})$. For the nonstationary Schr\"{o}dinger equation with the Hamiltonian $A_\varepsilon$ and for the hyperbolic equation with the operator $A_\varepsilon$, analogs of homogenization problems, related to the edges of the spectral bands of the operator $A_\varepsilon$, are studied  (the so called high-frequency homogenization). For the solutions of the Cauchy problems for these equations with special initial data, approximations in $L_2(\mathbb{R})$-norm for small $\varepsilon$ are obtained.
\end{abstract}

\keywords{periodic differential operators, Schr\"{o}dinger-type equations, hyperbolic equations, spectral bands, homogenization, effective operator, operator error estimates}

\allowdisplaybreaks

\section{Introduction}

\subsection{Periodic homogenization.}
The study of the wave propagation in periodic structures is of significant interest both for applications and from the
theoretical point of view. Direct numerical simulations of such processes may be difficult. One of the approaches to study these problems is application of homogenization theory. The aim of homogenization is to describe the macroscopic properties of inhomogeneous media by taking into account the properties of the microscopic structure. An extensive literature is devoted to homogenization problems. First of all, we mention the books~\cite{BaPa, BeLP, ZhKO}.

Let us discuss a typical problem of homogenization theory. Let $\Gamma$ be a lattice in $\mathbb{R}^d$, and let $\Omega$ be the cell of $\Gamma$. For any $\Gamma$-periodic function $F(\mathbf{x})$, we denote $F^\varepsilon (\mathbf{x}) \coloneqq F(\varepsilon^{-1} \mathbf{x})$, where $\varepsilon > 0$ is a (small) parameter. In $L_2(\mathbb{R}^d)$,  consider a differential operator~(DO) formally given by
\begin{equation}
	\label{intro_hatA_eps}
	\widehat{\mathcal{A}}_\varepsilon = - \operatorname{div} g^\varepsilon (\mathbf{x}) \nabla,
\end{equation}
where $g(\mathbf{x})$ is a Hermitian $\Gamma$\nobreakdash-periodic $(d \times d)$\nobreakdash-matrix-valued function, bounded and positive definite. Operator~\eqref{intro_hatA_eps} models the simplest cases of microinhomogeneous media with $\varepsilon \Gamma$-periodic structure. Let $u_\varepsilon(\mathbf{x})$ be a (weak) solution of the elliptic equation
\begin{equation}
	\label{intro_elliptic_eq}
	- \operatorname{div} g^\varepsilon(\mathbf{x}) \nabla u_\varepsilon(\mathbf{x}) + u_\varepsilon(\mathbf{x}) = f(\mathbf{x}),
\end{equation} 
where $f \in L_2(\mathbb{R}^d)$. For $\varepsilon \to 0$, the solution $u_\varepsilon$ converges to the solution $u_0$ of the "homogenized" equation:
\begin{equation}
	\label{intro_elliptic_eq_homog}
	- \operatorname{div} g^0 \nabla u_0(\mathbf{x}) + u_0(\mathbf{x}) = f(\mathbf{x}).
\end{equation} 
The operator $\widehat{\mathcal{A}}^{\mathrm{hom}}= - \operatorname{div} g^0 \nabla$ is called the effective operator for $\widehat{\mathcal{A}}_\varepsilon$. The matrix $g^0$ is determined by a well-known procedure (see., e.g.,~\cite[Chapter~2, \S\,3]{BaPa}, \cite[Chapter~3, \S\,1]{BSu2003}) that requires solving an auxiliary boundary value problem on the cell $\Omega$. Besides finding the effective coefficients, the following questions are of great interest. \emph{What is the type of convergence $u_\varepsilon \to u_0$? What is an estimate for $u_\varepsilon - u_0$?}

\subsection{Operator error estimates in homogenization.}
M.~Birman and T.~Suslina (see~\cite{BSu2003}) suggested the operator-theoretic (spectral) approach to homogenization problems in $\mathbb{R}^d$, based on the scaling transformation, the Floquet--Bloch theory, and the analytic perturbation theory. 

Let $u_\varepsilon$ be the solution of equation~\eqref{intro_elliptic_eq}, and let $u_0$ be the solution of equation~\eqref{intro_elliptic_eq_homog}. In~\cite{BSu2003},  it was proved that
\begin{equation}
	\label{intro_BSu2003_est_eq}
	\| u_\varepsilon - u_0 \|_{L_2(\mathbb{R}^d)} \le C \varepsilon \|f\|_{L_2(\mathbb{R}^d)}.
\end{equation}
Since $u_\varepsilon = (\widehat{\mathcal{A}}_\varepsilon + I)^{-1} f$ and $u_0 = (\widehat{\mathcal{A}}^{\mathrm{hom}}+ I)^{-1} f$, estimate~\eqref{intro_BSu2003_est_eq} can be rewritten in operator terms:
\begin{equation}
	\label{intro_BSu2003_est}
	\| (\widehat{\mathcal{A}}_\varepsilon + I)^{-1} - (\widehat{\mathcal{A}}^{\mathrm{hom}}+ I)^{-1} \|_{L_2(\mathbb{R}^d) \to L_2(\mathbb{R}^d)} \le C \varepsilon.
\end{equation} 
Parabolic equations were studied in~\cite{Su2004, Su2007}. In operator terms, the following approximation for the parabolic semigroup $e^{-t \widehat{\mathcal{A}}_\varepsilon}$, $t > 0$, was obtained:
\begin{equation}
	\label{intro_Su2004_est}
	\| e^{-t \widehat{\mathcal{A}}_{\varepsilon}} - e^{-t \widehat{\mathcal{A}}^{\mathrm{hom}}} \|_{L_2(\mathbb{R}^d) \to L_2(\mathbb{R}^d)} \le C \varepsilon (t + \varepsilon^2)^{-1/2}, \qquad t > 0.
\end{equation}
Estimates~\eqref{intro_BSu2003_est}, \eqref{intro_Su2004_est} are order-sharp; the constants $C$ are controlled explicitly in terms of the problem data. These estimates are called~\emph{operator error estimates} in homogenization. More accurate approximations for the resolvent and the exponential  with correctors taken into account were found in~\cite{BSu2005, BSu2006, V, Su2010}.

A different approach to operator error estimates (the so called “shift method”) for the elliptic and parabolic problems was suggested by V.~Zhikov and S.~Pastukhova in the papers~\cite{Zh2006, ZhPas2005, ZhPas2006}.  See also the survey~\cite{ZhPas2016}.

The situation with homogenization of nonstationary Schr\"{o}dinger-type equations and hyperbolic equations is quite different. The papers~\cite{BSu2008, Su2017, DSu2018, M2021, DSu2020, D2021} were devoted to such problems. In operator terms, the behavior of the operator-functions $e^{-i t \widehat{\mathcal{A}}_{\varepsilon}}$ and $\cos(t \widehat{\mathcal{A}}_{\varepsilon}^{1/2})$, $\widehat{\mathcal{A}}_{\varepsilon}^{-1/2} \sin(t \widehat{\mathcal{A}}_{\varepsilon}^{1/2})$ (where $\tau \in \mathbb{R}$) for small $\varepsilon$ was studied. For these operator-functions, it is impossible to obtain approximations in the operator norm on $L_2 (\mathbb{R}^d)$, and we are forced to consider the norm of operators acting from the Sobolev space $H^q (\mathbb{R}^d)$ (with a suitable $q$) to $L_2 (\mathbb{R}^d)$. In~\cite{BSu2008}, the following sharp-order estimates were proved:
\begin{align}
	\label{intro_exp_est}
	\| e^{-i t \widehat{\mathcal{A}}_{\varepsilon}} - e^{-i t \widehat{\mathcal{A}}^{\mathrm{hom}}} \|_{H^3 (\mathbb{R}^d) \to L_2 (\mathbb{R}^d)} &\le C (1 + |t|)\varepsilon, \\
	\label{intro_cos_est}
	\| \cos(t \widehat{\mathcal{A}}_{\varepsilon}^{1/2}) - \cos(t (\widehat{\mathcal{A}}^{\mathrm{hom}})^{1/2}) \|_{H^2 (\mathbb{R}^d) \to L_2 (\mathbb{R}^d)} &\le  C (1 + |t|)\varepsilon.
\end{align}
In~\cite{M2021}, the result for the operator $\widehat{\mathcal{A}}_{\varepsilon}^{-1/2} \sin(t \widehat{\mathcal{A}}_{\varepsilon}^{1/2})$ was obtained:
\begin{equation}
	\label{intro_sin_est}
	\| \widehat{\mathcal{A}}_{\varepsilon}^{-1/2} \sin(t \widehat{\mathcal{A}}_{\varepsilon}^{1/2}) - (\widehat{\mathcal{A}}^{\mathrm{hom}})^{-1/2} \sin(t (\widehat{\mathcal{A}}^{\mathrm{hom}})^{1/2}) \|_{H^1 (\mathbb{R}^d) \to L_2 (\mathbb{R}^d)} \le C (1 + |t|)\varepsilon.
\end{equation}
Moreover, in~\cite{M2021}, an approximation of the operator $\widehat{\mathcal{A}}_{\varepsilon}^{-1/2} \sin(t \widehat{\mathcal{A}}_{\varepsilon}^{1/2})$ for a fixed $t$ in the $(H^2 \to H^1)$-norm with error of order $O (\varepsilon)$ (with a corrector taken into account) was obtained. Next, in~\cite{Su2017, DSu2018, DSu2020, D2021}, it was shown that these results are sharp with respect to the norm type as well as with respect to the dependence on $t$ (for large $t$). On the other hand, it was shown  that under some additional assumptions (e.g., if the matrix $g(\mathbf{x})$ has real entries) estimates~\eqref{intro_exp_est}--\eqref{intro_sin_est} can be improved:
\begin{alignat}{2}
	\notag
	\| e^{-i t \widehat{\mathcal{A}}_{\varepsilon}} - e^{-i t \widehat{\mathcal{A}}^{\mathrm{hom}}} &\|_{H^2 (\mathbb{R}^d) \to L_2 (\mathbb{R}^d)} &  &\le C (1 + |t|^{1/2})\varepsilon, 
	\\
	\label{intro_cos_improved_est}
	\| \cos(t \widehat{\mathcal{A}}_{\varepsilon}^{1/2}) - \cos(t (\widehat{\mathcal{A}}^{\mathrm{hom}})^{1/2}) &\|_{H^{3/2} (\mathbb{R}^d) \to L_2 (\mathbb{R}^d)} & &\le  C (1 + |t|^{1/2})\varepsilon,
	\\
	\label{intro_sin_improved_est}
	\| \widehat{\mathcal{A}}_{\varepsilon}^{-1/2} \sin(t \widehat{\mathcal{A}}_{\varepsilon}^{1/2}) - (\widehat{\mathcal{A}}^{\mathrm{hom}})^{-1/2} \sin(t (\widehat{\mathcal{A}}^{\mathrm{hom}})^{1/2}) &\|_{H^{1/2} (\mathbb{R}^d) \to L_2 (\mathbb{R}^d)} & &\le C (1 + |t|^{1/2})\varepsilon.
\end{alignat}

Note that in~\cite{BSu2003, Su2004,  Su2007, BSu2005, BSu2006, V, Su2010, BSu2008, Su2017, DSu2018, M2021, DSu2020, D2021}, a much broader class of operators than~\eqref{intro_hatA_eps} (including matrix DOs) was studied. In particular, operators of the form
\begin{equation}
	\label{intro_A_eps}
	\mathcal{A}_{\varepsilon} = - \operatorname{div} \check{g}^{\varepsilon}(\mathbf{x}) \nabla + \varepsilon^{-2}V^{\varepsilon}(\mathbf{x})
\end{equation}
were considered. Here $\check{g}(\mathbf{x})$ is a $\Gamma$-periodic positive definite and bounded $(d \times d)$-matrix-valued function with real entries, the potential $V(\mathbf{x})$ is a $\Gamma$-periodic real-valued function, $V \in L_p(\Omega)$ with a suitable $p$ (and it is assumed that $\inf \spec \mathcal{A}_1 = 0$). 
For operator~\eqref{intro_A_eps}, it is impossible to find an operator $\mathcal{A}^{\mathrm{hom}}$ with constant coefficients such that the corresponding operator-functions converge to the operator-functions of $\mathcal{A}^{\mathrm{hom}}$. However, some approximations can be found if we "border" operator-functions of $\widehat{\mathcal{A}}^{\mathrm{hom}}$ by appropriate rapidly oscillating factors. In particular, an analog of~\eqref{intro_BSu2003_est} is as follows:
\begin{equation*}
	\| (\mathcal{A}_\varepsilon + I)^{-1} - [\omega^\varepsilon](\widehat{\mathcal{A}}^{\mathrm{hom}}+ I)^{-1}[\omega^\varepsilon] \|_{L_2(\mathbb{R}^d) \to L_2(\mathbb{R}^d)} \le C \varepsilon,
\end{equation*} 
where $\omega (\mathbf{x})$ is a positive $\Gamma$-periodic solution of the equation
\begin{equation*}
	- \operatorname{div} \check{g}(\mathbf{x}) \nabla \omega (\mathbf{x}) + V(\mathbf{x}) \omega (\mathbf{x}) = 0
\end{equation*}
satisfying the normalization condition $\| \omega \|_{L_2(\Omega)}^2 = | \Omega |$, and $\widehat{\mathcal{A}}^{\mathrm{hom}}$ is the effective operator for operator~\eqref{intro_hatA_eps} with the matrix $g(\mathbf{x}) = \check{g} (\mathbf{x}) \omega^2 (\mathbf{x})$.

Let us explain the method using the example of operator~\eqref{intro_hatA_eps}. The scaling transformation reduces investigation of the behavior of the operator $(\widehat{\mathcal{A}}_\varepsilon + I)^{-1}$, $\varepsilon \to 0$, to studying the operator $(\widehat{\mathcal{A}} + \varepsilon^2 I)^{-1}$, where $\widehat{\mathcal{A}} = \widehat{\mathcal{A}}_1 = - \operatorname{div} g(\mathbf{x}) \nabla$. Next, by the Floquet--Bloch theory, the operator~$\widehat{\mathcal{A}}$ expands in the direct integral of the operators $\widehat{\mathcal{A}}(\mathbf{k})$ acting in the space $L_2 (\Omega)$. The operator $\widehat{\mathcal{A}}(\mathbf{k})$ is defined by the differential expression $-\operatorname{div}_\mathbf{k} g(\mathbf{x}) \nabla_\mathbf{k}$, where $\nabla_\mathbf{k} = \nabla + i \mathbf{k}$, $\operatorname{div}_\mathbf{k} = \operatorname{div} + i \langle \mathbf{k}, \cdot \rangle$, with periodic boundary conditions. The spectrum of the operator $\widehat{\mathcal{A}}(\mathbf{k})$ is discrete.
It turns out that the behavior of the resolvent $(\widehat{\mathcal{A}} + \varepsilon^2 I)^{-1}$ can be described in terms of the threshold characteristics of $\widehat{\mathcal{A}}$ at the edge of the spectrum, i.e., it is sufficient to know the spectral decomposition of $\widehat{\mathcal{A}}$ only near the lower edge of the spectrum. In particular, the effective matrix $g^0$ is a Hessian of the first band function $E_1(\mathbf{k})$ at the point $\mathbf{k} = 0$.

Finally, we mention the recent paper~\cite{LiSh2021}, where the authors investigated the problem of convergence rates for a solution of the initial-Dirichlet boundary value problem for a wave equation; analogs of estimates~\eqref{intro_cos_improved_est}, \eqref{intro_sin_improved_est} as well as results with the Dirichlet corrector were obtained.

\subsection{High-frequency homogenization.}
As stated above, only a small neighborhood of the bottom of the spectrum (i.e., waves with low frequencies) contributes to homogenization. However, we can consider problems of wave propagation when the frequency is proportional to $\varepsilon^{-1}$ or $\varepsilon^{-2}$ (the high-frequency mode). In this case, even the leading order of the asymptotics oscillates rapidly. These problems were studied in~\cite[Chapter~4]{BeLP} using WKB-ansatz.

Traditional methods of homogenization theory, related to asymptotic expansions in two scales, were applied to these problems in~\cite{CrKaPi2010, HaMiCr2016}. We also cite the paper~\cite{Ce_etal_2015}, where application of the results of~\cite{CrKaPi2010} to photonic crystals was considered. In~\cite{CrKaPi2010}, an asymptotic expansion for solutions of the equation
\begin{equation*}
	\operatorname{div} g^\varepsilon(\mathbf{x}) \nabla u_\varepsilon(\mathbf{x}) + \nu^2 \rho^\varepsilon(\mathbf{x}) u_\varepsilon(\mathbf{x}) = 0,
\end{equation*}
which are perturbations of the standing waves, was obtained (the functions $g(\mathbf{x})$, $\rho(\mathbf{x})$ were supposed to be sufficiently smooth and $\Gamma$-periodic). In~\cite{HaMiCr2016}, a similar problem for travelling waves was considered. In the paper~\cite{APi2004}, homogenization of the Cauchy problem for a nonstationary Schr\"{o}dinger equation with well-prepared initial data concentrating on a Bloch eigenfunction was studied using techniques of two-scale convergence and suitable oscillating test functions; a rigorous derivation of so-called effective mass theorems in solid state physics was obtained. We also mention the papers~\cite{KuRa2012,KhKuRa2017}, where asymptotics of Green’s function for different values of the spectral parameter has been studied.

Now, let us discuss error estimates for high-frequency homogenization. This topic has been studied in~\cite{B03,SuKh2009,AkhAkSlSu2021} in the one-dimensional case ($d=1$) and in~\cite{BSu2004, SuKh2011} in the case of arbitrary dimension $d$. It is well-known that the spectrum of $\mathcal{A}$ has a band structure and may have gaps. For the sake of simplicity, we consider the case where $d=1$ and $\Gamma = \mathbb{Z}$;  in this case we shall use the notation $A_\varepsilon$ for operator~\eqref{intro_A_eps}. Let $\sigma > 0$ be a (non-degenerate) left edge of a band with an odd number ($\ge 3$) in the spectrum of the operator $A = A_1$. Then for $A_\varepsilon$, this edge "moves" to the point $\varepsilon^{-2} \sigma$ (to the high-frequency (high-energy) region). Instead of~\eqref{intro_elliptic_eq}, we consider the
equation
\begin{equation}
	\label{intro_int_edge_elliptic_eq_est}
	- \frac{d}{dx} g^\varepsilon(x) \frac{d}{dx} u_\varepsilon(x) - (\varepsilon^{-2} \sigma - \varkappa^2) u_\varepsilon(x) = f(x),
\end{equation} 
where $f \in L_2(\mathbb{R})$. It is supposed that $\varkappa > 0$ is such that the point $\varepsilon^{-2} \sigma - \varkappa^2$ belongs to the gap in the spectrum of the operator $A_\varepsilon$. Similarly to~\eqref{intro_BSu2003_est}, the question is reduced to studying the operator $(A_\varepsilon - (\varepsilon^{-2} \sigma - \varkappa^2)I)^{-1}$. In~\cite{B03}, the following result was proved:
\begin{equation}
	\label{intro_int_gap_est}
	\| (A_\varepsilon - (\varepsilon^{-2} \sigma - \varkappa^2)I)^{-1} - [\varphi_\sigma^\varepsilon](A_\sigma^{\mathrm{hom}} + \varkappa^2 I)^{-1}[\varphi_\sigma^\varepsilon] \|_{L_2(\mathbb{R}) \to L_2(\mathbb{R})} \le C \varepsilon.
\end{equation} 
Here $A_\sigma^{\mathrm{hom}} = -b_\sigma \frac{d^2}{dx^2}$ is the corresponding effective operator, $b_\sigma > 0$ is the coefficient in the asymptotics of the band function $E(k)$ corresponding to the
band for which $\sigma$ is the left edge: $E(k) \sim \sigma + b_\sigma k^2$, $k \sim 0$; and $\varphi_\sigma$ is a real-valued periodic solution of the equation $A \varphi_\sigma = \sigma \varphi_\sigma$, normalized in $L_2(0,1)$. Consequently, the possibility of homogenization for equation~\eqref{intro_int_edge_elliptic_eq_est} is a threshold effect near the edge of an internal gap.

Estimate~\eqref{intro_int_gap_est} was obtained in~\cite{B03} in the case where $V(x) = 0$. In~\cite{BSu2004}, an analog of estimate~\eqref{intro_int_gap_est} was proved for operators~\eqref{intro_A_eps} in arbitrary dimension $d \ge 1$. More accurate approximations with correctors were obtained in~\cite{SuKh2009, SuKh2011}. 

Parabolic equations in the one-dimensional case were studied in~\cite{AkhAkSlSu2021}. It was proved that
\begin{equation*}
	\| e^{-t A_{\varepsilon}} \mathcal{E}_{A_\varepsilon}[\varepsilon^{-2} \sigma, \infty) - e^{-t \sigma/\varepsilon^2} [\varphi_\sigma^\varepsilon]e^{-t A_\sigma^{\mathrm{hom}}}[\varphi_\sigma^\varepsilon] \|_{L_2(\mathbb{R}) \to L_2(\mathbb{R})} \le C e^{-t \sigma/\varepsilon^2} \varepsilon (t + \varepsilon^2)^{-1/2}, \quad t > 0,
\end{equation*}
and a more accurate approximation with a corrector was found. Here $\mathcal{E}_{A_\varepsilon}[\varepsilon^{-2} \sigma, \infty)$ is the spectral projection of the operator $A_\varepsilon$ corresponding to the
interval $[\varepsilon^{-2} \sigma, \infty)$.

\subsection{Main results.}
In the present paper, we study error estimates for high-frequency homogeniza\-tion of nonstationary Schr\"{o}dinger equations and hyperbolic equations in the one-dimensional case ($d=1$). Main results of the paper are formulated in Section~\ref{main_results_section}. (Note that in Section~\ref{main_results_section}, it is convenient for us to use  slightly different notations.) In the introduction, we consider again only the case where  $\sigma$ is a (non-degenerate) left band edge with an odd number $s$ in the spectrum of the operator $A$. 

Let $f, g \in L_2(\mathbb{R})$. Consider the Cauchy problems
\begin{equation}
	\label{intro_Cauchy_problem_Schrod_hyperb}
	\left\{
	\begin{aligned}
		&i \frac{\partial}{\partial t} u_\varepsilon (x,t) = (A_\varepsilon u_\varepsilon)(x,t) ,\\
		&u_\varepsilon (x,0) = (\Upsilon_\varepsilon f)(x),
	\end{aligned}
	\right.
	\qquad 
		\left\{
		\begin{aligned}
			&\frac{\partial^2}{\partial t^2}  v_\varepsilon (x,t) = - (A_\varepsilon v_\varepsilon)(x,t) + \varepsilon^{-2} \sigma v_\varepsilon(x,t),\\
			&v_\varepsilon (x,0) = (\Upsilon_\varepsilon f)(x), \; (\partial_t v_\varepsilon) (x,0) = (\Upsilon_\varepsilon g)(x),
		\end{aligned}
		\right.
\end{equation} 
where
\begin{equation*}
	(\Upsilon_\varepsilon f)(x) \coloneqq (2 \pi)^{-1/2} \int_\mathbb{R} (\Phi f) (k) \sum_{j=s}^{\infty} e^{ikx} \varphi_j(x/\varepsilon, \varepsilon k) \chi_{\widetilde{\Omega}_{j-s+1}} (\varepsilon k) \, dk.
\end{equation*}
Here $\{e^{ikx} \varphi_j(x, k)\}_{j=s}^{\infty}$ are the Bloch waves corresponding to the bands with the numbers $j \ge s$;
\begin{equation}
	\label{Brillouin_zones}
	\widetilde{\Omega}_j = (-j \pi, -(j-1)\pi] \cup ((j-1)\pi, j \pi ], \qquad j \in \mathbb{N},
\end{equation}
are the Brillouin zones. The initial data of problems~\eqref{intro_Cauchy_problem_Schrod_hyperb} are superpositions of the Bloch waves with the amplitudes, which are equal to the Fourier images $(\Phi f) (k)$, $(\Phi g) (k)$ of the functions $f(x)$, $g(x)$, and belong to the subspace $\mathcal{E}_{A_\varepsilon}[\varepsilon^{-2} \sigma, \infty) L_2(\mathbb{R})$. Main results of the paper are the following estimates:
\begin{align}
	\label{intro_Schrod_est}
	&\| u_\varepsilon(\cdot, t) - e^{-i t \varepsilon^{-2} \sigma} \varphi_\sigma^{\varepsilon}  u_0(\cdot, t)\|_{L_2(\mathbb{R})} \le C (1+|t|^{1/2}) \varepsilon \|f\|_{H^2(\mathbb{R})}, \qquad  f \in H^2(\mathbb{R}),
	\\
	\label{intro_hyperb_est}
	&\begin{multlined}[c][0.9\textwidth]
		\| v_\varepsilon(\cdot, t) - \varphi_\sigma^{\varepsilon}  v_0(\cdot, t)\|_{L_2(\mathbb{R})} \le C (1+|t|^{1/2}) \varepsilon (\|f\|_{H^{3/2}(\mathbb{R})} + \|g\|_{H^{1/2}(\mathbb{R})}), \\ f \in H^{3/2}(\mathbb{R}), \; g \in H^{1/2}(\mathbb{R}).
	\end{multlined}	
\end{align}
Here $u_0$ and $v_0$ are the solutions of the effective problems
\begin{equation*}
	\left\{
	\begin{aligned}
		&i \frac{\partial}{\partial t} u_0 (x,t) = (A^\textrm{hom}_\sigma u_0)(x,t) ,\\
		&u_0 (x,0) = f (x),
	\end{aligned}
	\right.
	\qquad
	\left\{
	\begin{aligned}
		&\frac{\partial^2}{\partial t^2} v_0 (x,t) = - (A^\textrm{hom}_\sigma v_0)(x,t) ,\\
		&v_0 (x,0) = f (x), \; (\partial_t v_0) (x,0) = g (x),
	\end{aligned}
	\right.
\end{equation*}
and $A^\textrm{hom}_\sigma$, $\varphi_\sigma$ are the same as in~\eqref{intro_int_gap_est}.

Note that estimates~\eqref{intro_Schrod_est}, \eqref{intro_hyperb_est} can be formulated in operator terms, but we postpone the discussion until Section~\ref{main_results_section}.

\subsection{Notation.}
Let $\mathfrak{H}_1$ and $\mathfrak{H}_{2}$ be complex separable Hilbert spaces. If $A \colon \mathfrak{H}_1 \to \mathfrak{H}_2$ is a closed linear operator, then $\Dom A$ stands for its domain, the adjoint operator is denoted by $A^*$. 
The symbol $\| \cdot \|_{\mathfrak{H}_1 \to \mathfrak{H}_2}$ denotes the norm of a linear bounded operator from $\mathfrak{H}_1$ to $\mathfrak{H}_2$. Next, if $A$ is a selfadjoint operator in some Hilbert space, then we use the notation $\spec A$ for the spectrum of $A$, $\mathcal{E}_A(\delta)$ stands for the spectral projection of the operator $A$ corresponding to the Borel set $\delta \subset \mathbb{R}$.

The standard $L_p$ classes of functions on an interval $(a,b) \subset \mathbb{R}$ are denoted by $L_p (a,b)$, $1 \le p \le \infty$. If $f$ is a measurable function, then $[f]$ or $[f(x)]$ denote the operator of multiplication by the function $f$ in the space $L_2$. Next, $H^s (\mathbb{R})$ is the Sobolev class of order $s \in \mathbb{R}$ and integrability index $2$; and $\widetilde{H}^1(0,1)$ is the subspace formed by the functions from $H^1(0,1)$ whose $1$-periodic extensions belong to $H^1_{\mathrm{loc}}(\mathbb{R})$.

If $F(x)$ is a $1$-periodic function, then we put  $F^\varepsilon (x) \coloneqq F(\varepsilon^{-1} x)$. By $\Phi \coloneqq \Phi_{x \to k}$ we denote the Fourier transform on $\mathbb{R}$ defined on the Schwartz class by the formula
\begin{equation*}
	(\Phi v) (k) = (2 \pi)^{-1/2} \int_{\mathbb{R}} e^{-i k x} v(x) \, d x, \qquad v \in \mathcal{S}(\mathbb{R}),
\end{equation*}
and extended by continuity up to the unitary mapping $\Phi \colon L_2(\mathbb{R}) \to L_2(\mathbb{R})$. For the characteristic function of a set $\delta \subset \mathbb{R}$, we use the notation $\chi_\delta$. The range of values of an arbitrary function $f$ is denoted by $R(f)$.

\subsection{Acknowledgments.} The author is grateful to T.~A.~Suslina for helpful discussions and attention to the work.

\section{The operator $A$}
Let $A$ be a self-adjoint operator in $L_2(\mathbb{R})$ generated by the differential expression
\begin{equation}
	\label{A_with_V}
	A = - \frac{d}{d x} \check{g} (x) \frac{d}{d x} + V(x),
\end{equation}
where 
\begin{equation}
	\label{g_check_cond}
	\left. 
	\begin{aligned}
		\check{g}~\text{is a real-valued measurable function,}&\\
		0 < \alpha_0 \le \check{g} (x) \le \alpha_1 < \infty, \; \check{g}(x+1) = \check{g}(x), \; x \in \mathbb{R},&
	\end{aligned}	
	\quad\right\rbrace 
\end{equation}
and where $V(x)$ is a real-valued potential such that $V \in L_1(0,1)$, $V(x+1) = V(x)$, $x \in \mathbb{R}$.
The precise definition of the operator $A$  is given in terms of the semi-bounded closed quadratic form
\begin{equation}
	\label{a_form_1}
	\mathfrak{a}[u,u] = \int_{\mathbb{R}} (\check{g} (x) |u' (x)|^2 + V(x) |u(x)|^2) \, d x, \qquad u \in H^1(\mathbb{R}).
\end{equation}
Adding an appropriate constant to $V$, we assume that $\inf \spec A = 0$. Under this assumption the operator $A$ admits a convenient factorization (see, e.g., \cite{KiSi1987}, \cite[Chapter~6, Section~1.1]{BSu2003}). To describe this factorization, we consider the equation
\begin{equation*}
	- (\check{g}(x) \omega' (x))' + V(x) \omega (x) = 0
\end{equation*}
(which is understood in the weak sense). There exists a $1$-periodic solution $\omega \in \widetilde{H}^1 (0,1)$ of this equation defined up to a constant factor. This factor can be fixed  so that $\omega (x) > 0$ and
\begin{equation}
	\label{omega_cond1}
	\| \omega \|_{L_2(0,1)} = 1.
\end{equation}
It turns out that the solution $\omega (x)$ is positive definite and bounded:
\begin{equation}
	\label{omega_cond2}
	0 < \beta_0 \le \omega (x) \le \beta_1 < \infty.
\end{equation}
The function $\omega$ is Lipschitz and is a multiplier in $H^1 (\mathbb{R})$ and in $\widetilde{H}^1 (0,1)$. The substitution  $u = \omega \phi$ transforms form~\eqref{a_form_1} to the form
\begin{equation}
	\label{a_form_2}
	\mathfrak{a}[u,u] = \int_{\mathbb{R}^d} \omega^2 (x) \check{g}(x) |\phi'(x)|^2 \, dx, \qquad u = \omega \phi, \quad \phi \in H^1(\mathbb{R}).
\end{equation}
This means that operator~\eqref{A_with_V} admits the following factorization:
\begin{equation}
	\label{A}
	A = - \omega(x)^{-1} \frac{d}{d x} g(x) \frac{d}{d x} \omega(x)^{-1}, \qquad g = \omega^2 \check{g}.
\end{equation}
We take representation~\eqref{A} of the operator $A$ as the initial definition, i.e., we assume that $A$ is the operator generated by form~\eqref{a_form_2}, where $\check{g}$ and $\omega$ are $1$-periodic functions satisfying~\eqref{g_check_cond}, \eqref{omega_cond1}, \eqref{omega_cond2}. We can return to representation~\eqref{A_with_V} putting $V(x) = (\check{g}(x) \omega'(x))'/\omega(x)$. However, the potential $V(x)$ may be highly singular.

\section{Spectral decomposition of operator~\eqref{A}}
We need to describe the spectral properties of operator~\eqref{A}. For this, let us introduce the objects associated with the spectral resolution of operator~\eqref{A}. We follow the papers~\cite{B03,SuKh2009,AkhAkSlSu2021}, see also~\cite[Chapter~2, \S\S\,1,2]{BSu2003}.  
In $L_2(0,1)$, consider the family of quadratic forms 
\begin{equation}
	\label{a(k)_form}
	\mathfrak{a}(k)[u,u] = \int_0^1 g (x) |\phi' + i k \phi|^2 \, d x, \qquad \phi = \omega^{-1} u \in \widetilde{H}^1(0,1), \quad k \in \mathbb{R}.
\end{equation}
The operator generated by form~\eqref{a(k)_form} is denoted by $A (k)$. The parameter $k \in \mathbb{R}$ is called the \emph{quasimomentum}.  Let $E_l (k)$, $l \in \mathbb{N}$, be consecutive (counted with multiplicities)
eigenvalues of the operator $A (k)$, and let $\varphi_l (\cdot, k)$, $l \in \mathbb{N}$, be the corresponding normalized eigenfunctions. The functions $E_l (k)$ are called \emph{band functions}; they are $(2 \pi)$-periodic. Next, $\varphi_l (x+1, k) = \varphi_l (x, k)$, and the functions $e^{ikx} \varphi_l (x, k)$ can be chosen to be $(2 \pi)$-periodic in $k$.

Denote by $\widetilde{\Omega} = \widetilde{\Omega}_1 = (-\pi, \pi]$ the central (first) Brillouin zone. Since the functions  $E_l (k)$ and $e^{ikx} \varphi_l (x, k)$ are periodic, it suffices to consider $k \in \widetilde{\Omega}$ only. However, it will be sometimes convenient to assume that $k \in \mathbb{R}$. 

Consider the function $E_s (k)$ for some $s \in \mathbb{N}$. The following facts are well known.
\begin{enumerate}[label=\arabic*$^{\circ}.$, ref=\arabic*$^{\circ}$, leftmargin=3\parindent]
	\setlength\itemsep{-0.1em}
	\item The function $E_s$ is Lipschitz and even.
	\item The mapping $k \mapsto E_s(k)$, $k \in \widetilde{\Omega}$, covers the band $R (E_s)$ twice.
	\item The function $E_s$ is piecewise real-analytic, its smoothness may be lost at the points where $E_{s+1} (k) = E_s(k)$ or $E_{s-1} (k) = E_s(k)$.
	\item The equality $E_{s+1} (k) = E_s(k)$, $k \in \widetilde{\Omega}$, is possible only if $k =  \pi$ ($s$ is an odd number).
	\item The equality $E_{s-1} (k) = E_s(k)$, $k \in \widetilde{\Omega}$, is possible only if $k = 0$ ($s$ is an odd number).	
	\item For $0 \le k \le \pi$ the function $E_s(k)$ is strictly monotone.
	\item If the number $s$ is odd, then the function $E_s (k)$, $k \in \widetilde{\Omega}$, has its minimum value at the point $k=0$ and its maximum value at the point $k = \pi$.
	\item If the number $s$ is even, then the function $E_s (k)$, $k \in \widetilde{\Omega}$, has its maximum value at the point $k=0$ and its minimum value at the point $k = \pi$.
\end{enumerate}

We need estimates for the band functions $E_l(k)$, $l \in \mathbb{N}$. Consider the form
\begin{equation*}
	\mathfrak{a}^\circ (k)[\phi,\phi] = \int_0^1 |\phi' + i k \phi|^2 \, d x, \qquad \phi \in \widetilde{H}^1(0,1), \quad k \in \mathbb{R}. 
\end{equation*}
Let $E^\circ_l (k)$, $l \in \mathbb{N}$, be consecutive (counted with multiplicities) eigenvalues of the corresponding operator. They are $(2 \pi)$-periodic and are equal to 
\begin{gather*}
	E^\circ_1(k)   =  k^2, \qquad k \in \widetilde{\Omega},
	\\
	E^\circ_{2j}(k) = (2 \pi j - |k|)^2, \qquad E^\circ_{2j+1}(k) = (2 \pi j + |k|)^2,  \qquad k \in \widetilde{\Omega}, \quad j \in \mathbb{N},
\end{gather*}  
From periodicity of the functions $\{E^\circ_l (k)\}_{l \in \mathbb{N}}$ it follows that
\begin{equation}
	\label{E0_l_k^2}
	E^\circ_l (k) \chi_{\widetilde{\Omega}_{l}}(k) = k^2 \chi_{\widetilde{\Omega}_{l}}(k), \qquad k \in \mathbb{R}, \quad l \in \mathbb{N}.
\end{equation}
Here $\widetilde{\Omega}_l$ are the Brillouin zones~\eqref{Brillouin_zones}. Next, using the Fourier series, one can show that
\begin{equation*}
	 \alpha_0 \beta_0^2 \mathfrak{a}^\circ (k)[\phi,\phi]\le \mathfrak{a} (k) [u,u] \le  \alpha_1 \beta_1^2 \mathfrak{a}^\circ (k)[\phi,\phi], \qquad \phi = \omega^{-1} u \in \widetilde{H}^1(0,1), \quad k \in \mathbb{R},
\end{equation*}
whence
\begin{equation*}
	\alpha_0 \beta_0^2 \beta_1^{-2} \frac{\mathfrak{a}^\circ (k)[\phi,\phi]}{\|\phi\|_{L_2(0,1)}^2} \le \frac{\mathfrak{a} (k) [u,u]}{\|u\|_{L_2(0,1)}^2}  \le  \alpha_1 \beta_0^{-2} \beta_1^2 \frac{\mathfrak{a}^\circ (k)[\phi,\phi]}{\|\phi\|_{L_2(0,1)}^2}, \qquad \phi = \omega^{-1} u \in \widetilde{H}^1(0,1), \quad k \in \mathbb{R}.
\end{equation*}
Using the variational principle, periodicity of the functions $\{E_l (k)\}_{l\in \mathbb{N}}$, and~\eqref{E0_l_k^2}, we get
\begin{equation}
	\label{E_l_estimates}
	\alpha_0 \beta_0^2 \beta_1^{-2} k^2 \chi_{\widetilde{\Omega}_l}(k) \le E_l (k) \chi_{\widetilde{\Omega}_l}(k) \le \alpha_1 \beta_0^{-2} \beta_1^2 k^2 \chi_{\widetilde{\Omega}_l}(k), \qquad k \in \mathbb{R}, \quad l \in \mathbb{N}.
\end{equation}

Now we introduce integral operators
\begin{equation}
	\label{Psi_l}
	\Psi_l \colon L_2(\mathbb{R}) \to L_2(\mathbb{S}^1), \qquad l \in \mathbb{N},
\end{equation}
defined on the Schwartz class by the following formula:
\begin{equation}
	\label{Psi_l_integral}
	(\Psi_l v) (k) = (2 \pi)^{-1/2} \int_{\mathbb{R}} e^{-i k x} \overline{\varphi_l (x, k)} v(x) \, d x, \qquad v \in \mathcal{S}(\mathbb{R}), \quad l \in \mathbb{N}.
\end{equation}
Points of the circle $k \in \mathbb{S}^1$ can be realized, for example, as points from $\widetilde{\Omega}$. However, other realizations will be also convenient for us. Operators~\eqref{Psi_l_integral} extend by continuity up to partial isometries "onto". The operators $\Psi_l^* \Psi_l$ are orthogonal projections of $L_2(\mathbb{R})$ onto $\mathcal{E}_{A}( R(E_l)) L_2(\mathbb{R})$, and $\sum_{l \in \mathbb{N}} \Psi_l^* \Psi_l = I$.
The following representation (the Floquet--Bloch decomposition) is true:
\begin{equation}
	\label{decomp}
	A = \sum_{l=1}^{\infty} \Psi_l^* [E_l] \Psi_l.
\end{equation}

From~\eqref{decomp} it follows that the spectrum of the operator $A$ is the union of intervals (bands), which are the ranges of the functions $E_l (k)$: 
\begin{equation*}
	\sigma(A) = \bigcup_{l=1}^\infty R (E_l) = [E_1(0), E_1(\pi)] \cup [E_2(\pi), E_2(0)] \cup [E_3(0), E_3(\pi)] \cup \ldots.
\end{equation*}
In the one-dimensional case the \emph{spectral bands} cannot overlap. The intervals
\begin{equation*}
	 (- \infty, E_1(0)),  (E_1(\pi), E_2(\pi)), (E_2(0), E_3(0)), \ldots
\end{equation*}
are called \emph{spectral gaps}. Note that some bands may touch, i.e., they may intersect at the boundary points. This means that some gaps may be empty.

\section{A gap}
Let us fix some $s \in \mathbb{N}$. Here and throughout the paper we agree to denote $E(k) \coloneqq E_s(k)$, $\varphi(x, k) \coloneqq \varphi_s(x, k)$, $\Psi \coloneqq\Psi_s$. Put
\begin{equation*}
		\widetilde{\mathcal{H}}^1(0,1) \coloneqq \{f \colon \omega^{-1} f \in \widetilde{H}^1(0,1)\}, \qquad \|f\|_{\widetilde{\mathcal{H}}^1(0,1)} \coloneqq \| \omega^{-1} f \|_{H^1(0,1)}.
\end{equation*}

We suppose that at least one of the following four conditions is fulfilled.
\begin{condition}
	\label{cond1}
	Let $s$ be an odd number. There is the gap $(E_{s-1}(0), E(0)) \ne \varnothing$ in the spectrum of the operator $A$.
\end{condition}
\begin{condition}
	\label{cond2}
	Let $s$ be an even number. There is the gap $(E(0), E_{s+1}(0)) \ne \varnothing$ in the spectrum of the operator $A$.
\end{condition}
Under the assumptions of Condition~\ref{cond1} or Condition~\ref{cond2}, we have
\begin{alignat}{2}
	\label{E_powerseries_1}
	&E (k) = \sigma_0 \pm  b_0 k^2 + k^4 \gamma_0 (k), & \qquad &|k| \le \pi, \;  b_0 > 0,\\
	\label{phi_powerseries_1}
	&\varphi (x, k) = \varphi_0 (x) + k \theta_0 (x,k), & &|k| < \pi.
\end{alignat}
In equality~\eqref{E_powerseries_1}, the sign "$+$" corresponds to the case where Condition~\ref{cond1} is fulfilled, "$-$" to the case where Condition~\ref{cond2} is fulfilled. The function $\gamma_0(k)$ is continuous, and for $|k| < \pi$ this function is real-analytic; $\sigma_0 \coloneqq E(0)$, $\varphi_0 (x) \coloneqq \varphi(x, 0)$; $\varphi (\cdot, k)$ and $\theta_0 (\cdot,k)$ are real-analytic functions of $k \in (-\pi,\pi)$ with values in $\widetilde{\mathcal{H}}^1(0,1)$.

\begin{condition}
	\label{cond3}
	Let $s$ be an odd number. There is the gap $(E(\pi), E_{s+1}(\pi)) \ne \varnothing$ in the spectrum of the operator $A$.
\end{condition}
\begin{condition}
	\label{cond4}
	Let $s$ be an even number. There is the gap $(E_{s-1}(\pi), E(\pi)) \ne \varnothing$ in the spectrum of the operator $A$.
\end{condition}
Under the assumptions of Condition~\ref{cond3} or Condition~\ref{cond4}, we have
\begin{alignat}{2}
	\label{E_powerseries_2}
	&E (k) = \sigma_{\pi} \pm  b_{\pi} (k - \pi)^2 + (k - \pi)^4 \gamma_{\pi} (k), & \qquad &0 \le k \le 2\pi, \; b_{\pi} > 0,\\
	\label{phi_powerseries_2}
	&\varphi (x, k) = \varphi_{\pi}(x) + (k - \pi) \theta_{\pi}(x,k), & &0 < k < 2\pi.
\end{alignat}
In equality~\eqref{E_powerseries_2}, the sign "$+$" corresponds to the case where Condition~\ref{cond4} is fulfilled, "$-$" to the case where Condition~\ref{cond3} is fulfilled. The function $\gamma_{\pi} (k)$ is continuous, and for $0 < k < 2\pi$ this function is real-analytic; $\varphi_{\pi}(x) \coloneqq \varphi(x, \pi)$; $\varphi (\cdot, k)$ and $\theta_{\pi} (\cdot,k)$ are real-analytic functions of $k \in (0, 2\pi)$ with values in $\widetilde{\mathcal{H}}^1(0,1)$.
\begin{remark}
	In order to include the case of the semi-infinite gap $(-\infty, 0)$ under the assumptions of  Condition~\textup{\ref{cond1}}, we formally put $E_0(0) = - \infty$.
\end{remark}

\begin{remark}
	The functions $\varphi_{0}$ and $\varphi_{\pi}$ in~\eqref{phi_powerseries_1}, \eqref{phi_powerseries_2} belong to $\widetilde{\mathcal{H}}^1(0,1)$ and therefore are bounded. We suppose that they are $1$-periodically extended to  $\mathbb{R}$. 
\end{remark}

\begin{remark}
	The coefficients $b_0$, $b_{\pi}$ in~\eqref{E_powerseries_1} and~\eqref{E_powerseries_2} can be expressed in terms of solutions of some auxiliary boundary value problems on the interval of periodicity $(0,1)$. See, e.g.,~\textup{\cite[Remarks~2.2, 2.4]{AkhAkSlSu2021}}.
\end{remark}

\section{Auxiliary statements}
Put $k_0 \coloneqq 0$ if Condition~\ref{cond1} or~\ref{cond2} is fulfilled, and put  $k_0 \coloneqq \pi$ if Condition~\ref{cond3} or~\ref{cond4} is fulfilled. Here and throughout the paper we assume that
\begin{equation}
	\label{gamma_ne_0}
	\gamma_{k_0}(k_0) \ne 0.
\end{equation}

Suppose that Condition~\ref{cond1} or Condition~\ref{cond4} is satisfied. Consider the expression $(E(k) - \sigma_{k_0})^{1/2}$. Using the Taylor formula for the function $\sqrt{1 + x}$, we have
\begin{equation}
	\label{sqrt_E_s_series_k=0}
	(E(k) - \sigma_{k_0})^{1/2}  = b_{k_0}^{1/2} |k-k_0| + |k-k_0|^3 \tilde{\gamma}_{k_0}(k),
\end{equation}
where $\tilde{\gamma}_{k_0}(k) = \frac{1}{2} b_{k_0}^{-1/2} \gamma_{k_0}(k) (1 + O((k-k_0)^2))$, $k \sim k_0$.
Similarly, if Condition~\ref{cond2} or Condition~\ref{cond3} is satisfied, we have
\begin{equation*}
	(\sigma_{k_0} - E(k))^{1/2}  = b_{k_0}^{1/2} |k-k_0|  - |k-k_0|^3 \tilde{\gamma}_{k_0}(k).
\end{equation*}

Let us fix $0 < \kappa < \pi$ so that 
\begin{gather}
	\label{delta_fix}
	\frac{1}{2} |\gamma_{k_0} (k_0)| \le  |\gamma_{k_0} (k)| \le \frac{3}{2} |\gamma_{k_0} (k_0)|,
	\\
	\label{delta_fix2}
	\frac{1}{2} |\tilde{\gamma}_{k_0} (k_0)| \le  |\tilde{\gamma}_{k_0} (k)| \le \frac{3}{2} |\tilde{\gamma}_{k_0} (k_0)|,
	\\
	\label{delta_fix3}
	(k-k_0)^2 |\tilde{\gamma}_{k_0} (k)| \le \frac{1}{2} b_{k_0}^{1/2},
\end{gather}
where $|k-k_0| \le \kappa$, and denote $\mathfrak{K} \coloneqq \{k \colon |k| \le \kappa\}$.

\begin{lemma}
	\label{lemma1}
	Suppose that one of Conditions~\textup{\ref{cond1}\,--\,\ref{cond4}} is fulfilled. Let $q \ge 0$, $r \ge -1$. Then for $0 < \varepsilon \le 1$ we have
	\begin{align}
		\label{lemma1_est_1}
		\bigl\| (\Psi^* - [\varphi_{k_0}]\Phi^*) [\varepsilon^{q} (|k-k_0|^2 + \varepsilon^2)^{-q/2} \chi_\mathfrak{K}(k-k_0)] \bigr\|_{L_2(\mathbb{R}) \to L_2(\mathbb{R})} &\le C \varepsilon^{\min(1,q)},
		\\
		\label{lemma1_est_2}
		\bigl\| (\Psi^* - [\varphi_{k_0}]\Phi^*) [\varepsilon^{r} (|k-k_0|^2 + \varepsilon^2)^{-r/2} |k-k_0|^{-1} \chi_\mathfrak{K}(k-k_0)] \bigr\|_{L_2(\mathbb{R}) \to L_2(\mathbb{R})} &\le C \varepsilon^{\min(0,r)}.
	\end{align}
The constant $C$ from~\eqref{lemma1_est_1}  depends on $q$, $\kappa$, $\|\theta_{k_0}\|_{M_\kappa(k_0)}$\textup{;} the constant $C$ from~\eqref{lemma1_est_2} depends on $r$, $\kappa$, $\|\theta_{k_0}\|_{M_\kappa(k_0)}$\textup{;} the quantity $\|\theta_{k_0}\|_{M_\kappa(k_0)}$ is defined below in~\eqref{mult_norm}.
\end{lemma}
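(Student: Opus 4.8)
The plan is to reduce the lemma to an explicit integral representation of $\Psi^*-[\varphi_{k_0}]\Phi^*$ near $k_0$, followed by an elementary bound for a scalar symbol in the quasimomentum. First I would fix the realization of the circle $\mathbb{S}^1$ adapted to $k_0$ (the interval $(-\pi,\pi]$ if $k_0=0$, the interval $(0,2\pi]$ if $k_0=\pi$), so that the localization set $\{k\colon|k-k_0|\le\kappa\}$ is a genuine interval of length $2\kappa<2\pi$ on which $\varphi(\cdot,k)$ is analytic and expansions~\eqref{phi_powerseries_1},~\eqref{phi_powerseries_2} hold. Computing the adjoint of~\eqref{Psi_l_integral} gives $(\Psi^*w)(x)=(2\pi)^{-1/2}\int e^{ikx}\varphi(x,k)w(k)\,dk$, while $([\varphi_{k_0}]\Phi^*w)(x)=(2\pi)^{-1/2}\int e^{ikx}\varphi_{k_0}(x)w(k)\,dk$; subtracting and using $\varphi(x,k)-\varphi_{k_0}(x)=(k-k_0)\theta_{k_0}(x,k)$ yields, for $w$ supported in $\{|k-k_0|\le\kappa\}$,
\[
\bigl((\Psi^*-[\varphi_{k_0}]\Phi^*)w\bigr)(x)=(2\pi)^{-1/2}\int e^{ikx}(k-k_0)\,\theta_{k_0}(x,k)\,w(k)\,dk .
\]
I would test the claimed bounds on $v\in\mathcal{S}(\mathbb{R})$ and extend by density. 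The key structural observation is that in both~\eqref{lemma1_est_1} and~\eqref{lemma1_est_2} the factor $(k-k_0)$ coming from the expansion cancels the singular part of the symbol $m(k)$ in square brackets: the product $(k-k_0)m(k)\chi_{\mathfrak{K}}(k-k_0)$ is a bounded, compactly supported function of $k$ (in~\eqref{lemma1_est_2}, $(k-k_0)|k-k_0|^{-1}=\operatorname{sgn}(k-k_0)$), so the composite operator $(\Psi^*-[\varphi_{k_0}]\Phi^*)[m\chi_{\mathfrak{K}}]$ is well defined by the displayed integral even when $[m\chi_{\mathfrak{K}}]$ alone fails to be bounded on $L_2$.

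Next I would expand $\theta_{k_0}(x,k)=\sum_{n\in\mathbb{Z}}c_n(k)e^{2\pi inx}$ in its Fourier series in the $1$-periodic variable $x$, so that $\sum_n|c_n(k)|^2=\|\theta_{k_0}(\cdot,k)\|^2_{L_2(0,1)}$. Substituting into the integral and shifting $k\mapsto k-2\pi n$ in the $n$-th term represents $(\Psi^*-[\varphi_{k_0}]\Phi^*)[m\chi_{\mathfrak{K}}]v$ as $\Phi^*$ applied to $\sum_n g_n(\cdot-2\pi n)$, where $g_n(k)=c_n(k)(k-k_0)m(k)\chi_{\mathfrak{K}}(k-k_0)v(k)$ is supported in $\{|k-k_0|\le\kappa\}$. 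Since $2\kappa<2\pi$, the translated supports $\{|k-k_0|\le\kappa\}+2\pi n$ are pairwise disjoint, and Plancherel gives
\[
\bigl\|(\Psi^*-[\varphi_{k_0}]\Phi^*)[m\chi_{\mathfrak{K}}]v\bigr\|^2_{L_2(\mathbb{R})}=\sum_n\|g_n\|^2_{L_2}=\int_{|k-k_0|\le\kappa}\|\theta_{k_0}(\cdot,k)\|^2_{L_2(0,1)}\,\bigl|(k-k_0)m(k)\bigr|^2\,|v(k)|^2\,dk .
\]
Because $\kappa<\pi$, the map $k\mapsto\|\theta_{k_0}(\cdot,k)\|_{L_2(0,1)}$ is bounded on $\{|k-k_0|\le\kappa\}$, controlled by $\|\theta_{k_0}\|_{M_\kappa(k_0)}$ from~\eqref{mult_norm}, so the operator norm squared is at most $C\sup_{|k-k_0|\le\kappa}|(k-k_0)m(k)|^2$.

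Finally I would carry out the elementary estimate of this supremum. With $t=(k-k_0)^2\in[0,\kappa^2]$: in~\eqref{lemma1_est_1} one has $|(k-k_0)m(k)|^2=t\,\varepsilon^{2q}(t+\varepsilon^2)^{-q}$, which for $q\ge1$ equals $\tfrac{t}{t+\varepsilon^2}\,\varepsilon^{2q}(t+\varepsilon^2)^{-(q-1)}\le\varepsilon^{2q}\varepsilon^{-2(q-1)}=\varepsilon^2$, and for $0\le q<1$ is $\le\varepsilon^{2q}t^{1-q}\le\varepsilon^{2q}\kappa^{2(1-q)}$, i.e.\ $\le C\varepsilon^{2\min(1,q)}$ in both cases; in~\eqref{lemma1_est_2} one has $(k-k_0)m(k)=\operatorname{sgn}(k-k_0)\,\varepsilon^r(t+\varepsilon^2)^{-r/2}$, so $|(k-k_0)m(k)|^2=\varepsilon^{2r}(t+\varepsilon^2)^{-r}$, which for $r\ge0$ is $\le\varepsilon^{2r}\varepsilon^{-2r}=1$ and for $-1\le r<0$ is $\le\varepsilon^{2r}(\kappa^2+1)^{-r}$ (using $\varepsilon\le1$), i.e.\ $\le C\varepsilon^{2\min(0,r)}$. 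Taking square roots gives~\eqref{lemma1_est_1} and~\eqref{lemma1_est_2} with constants depending only on the indicated data. The one point requiring a bit of care is the interpretation of the composite operator with a singular symbol, resolved by the cancellation noted above; the remaining steps are routine Fourier analysis and calculus.
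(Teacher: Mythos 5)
Your proof is correct, and its skeleton is the same as the paper's: isolate the factor $\varphi(x,k)-\varphi_{k_0}(x)=(k-k_0)\theta_{k_0}(x,k)$ from~\eqref{phi_powerseries_1}--\eqref{phi_powerseries_2}, observe that the resulting $(k-k_0)$ cancels the singular or near-singular part of the symbol in square brackets, and finish with exactly the same elementary sup-bounds on $\varepsilon^{q}|k-k_0|\,((k-k_0)^2+\varepsilon^2)^{-q/2}\chi_\mathfrak{K}$ and its analogue for $r$. The one place where you genuinely diverge is the middle step. The paper passes to the adjoint and invokes the multiplier lemma from~\cite{B03}: the statement that $\overline{\theta_{k_0}(x,k)}$ is a multiplier on kernels of bounded integral operators $L_2(\mathbb{R})\to L_2(k_0-\kappa,k_0+\kappa)$, with norm~\eqref{mult_norm} involving the $C^1$-norm in $k$. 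You instead prove the needed boundedness from scratch: expanding $\theta_{k_0}(x,k)$ in a Fourier series in the periodic variable $x$, shifting $k\mapsto k-2\pi n$, and using that the translates of $\{|k-k_0|\le\kappa\}$ by $2\pi n$ are disjoint (since $\kappa<\pi$), you get via Plancherel an exact identity
\[
\bigl\|(\Psi^*-[\varphi_{k_0}]\Phi^*)[m\chi_{\mathfrak{K}}]v\bigr\|^2_{L_2(\mathbb{R})}
=\int_{|k-k_0|\le\kappa}\|\theta_{k_0}(\cdot,k)\|^2_{L_2(0,1)}\,|(k-k_0)m(k)|^2\,|v(k)|^2\,dk,
\]
which requires only $\sup_{|k-k_0|\le\kappa}\|\theta_{k_0}(\cdot,k)\|_{L_2(0,1)}<\infty$ (weaker than~\eqref{mult_norm}). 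This makes the argument self-contained and marginally sharper in the constant; the paper's route is shorter on the page but outsources the key analytic fact to~\cite{B03}. You also correctly flag the only delicate point of interpretation — for~\eqref{lemma1_est_2} the multiplication operator alone is unbounded and the composite must be read through the cancelled symbol $(k-k_0)m(k)$ — which the paper handles in the same implicit way. No gaps.
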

\begin{proof}
	Let us prove~\eqref{lemma1_est_1}. Consider the adjoint operator
	\begin{equation*}
		[\varepsilon^q (|k-k_0|^2 + \varepsilon^2)^{-q/2} \chi_\mathfrak{K}(k-k_0)] (\Psi - \Phi[\overline{\varphi_{k_0}}]).
	\end{equation*}
	By~\eqref{phi_powerseries_1}, \eqref{phi_powerseries_2}, we conclude that the operator $[\chi_\mathfrak{K}(k-k_0)] (\Psi - \Phi[\overline{\varphi_{k_0}}])$ is the integral operator with the kernel
	\begin{equation}
		\label{lemma1_proof_f1}
		(2 \pi)^{-1/2} \chi_\mathfrak{K}(k-k_0) e^{-i x k} (\overline{\varphi (x, k)} - \overline{\varphi_{k_0}(x)}) = (2 \pi)^{-1/2} \chi_\mathfrak{K}(k-k_0) e^{-i x k} (k-k_0) \overline{\theta_{k_0}(x, k)}.
	\end{equation}
	Kernel~\eqref{lemma1_proof_f1} differs from the kernel of the Fourier operator by the factor $(k-k_0) \chi_\mathfrak{K}(k-k_0) \overline{\theta_{k_0}(x, k)}$. The function $\overline{\theta_{k_0}(x, k)}$ is a multiplier on the set of kernels of bounded integral operators that take $L_2(\mathbb{R})$ to $L_2(k_0-\kappa,k_0+\kappa)$ (see~\cite[\S\,2, Sec.~2]{B03}), and its norm in the class of multipliers equals
	\begin{equation}
		\label{mult_norm}
		\|\theta_{k_0}\|_{M_\kappa(k_0)} \coloneqq \underset{x \in \mathbb{R}}{\operatorname{ess-sup}\,} \| \theta_{k_0} (x, \cdot) \|_{C^1[k_0-\kappa,k_0+\kappa]} < \infty.
	\end{equation}
	Together with the inequalities
	\begin{alignat*}{2}
		&\varepsilon^{q} |k-k_0| \bigl((k-k_0)^2 + \varepsilon^2\bigr)^{-q/2} \chi_\mathfrak{K}(k-k_0)  \le \kappa^{1-q} \varepsilon^{q}, & \qquad &\text{for} \quad 0 \le q \le 1,
		\\
		&\varepsilon^{q} |k-k_0| \bigl((k-k_0)^2 + \varepsilon^2\bigr)^{-q/2} \chi_\mathfrak{K}(k-k_0)  \le \varepsilon, & \qquad &\text{for} \quad q > 1,		
	\end{alignat*}
  this yields~\eqref{lemma1_est_1}.
  
  Estimate~\eqref{lemma1_est_2} is proved in a similar way taking into account the inequalities
  \begin{alignat*}{2}
  	&|k-k_0| \cdot |k-k_0|^{-1} \varepsilon^{r} ((k-k_0)^2 + \varepsilon^2)^{-r/2} \chi_\mathfrak{K}(k-k_0)  \le (\kappa^2+1)^{-r/2} \varepsilon^{r}, & \qquad &\text{for} \quad -1 \le r < 0,\\
  	&|k-k_0| \cdot |k-k_0|^{-1} \varepsilon^{r} ((k-k_0)^2 + \varepsilon^2)^{-r/2} \chi_\mathfrak{K}(k-k_0)  \le 1, & \qquad &\text{for} \quad r \ge 0.  	
  \end{alignat*}
\end{proof}

\begin{lemma}
	\label{lemma2}
	Suppose that one of Conditions~\textup{\ref{cond1}\,--\,\ref{cond4}} is fulfilled, and condition~\eqref{gamma_ne_0} is also fulfilled. For $t \ne 0$ we have
	\begin{align}
		\label{lemma2_est_1}
		&\begin{multlined}[c][0.9\textwidth]
			\left\| \left[ \frac{\varepsilon^q}{((k-k_0)^2 + \varepsilon^2)^{q/2}} \sin\left( \frac{1}{2} t \varepsilon^{-2} (k-k_0)^4 \gamma_{k_0} (k) \right) \chi_\mathfrak{K}(k-k_0) \right] \right\|_{L_2(\mathbb{R}) \to L_2(\mathbb{R})}  
			\\
			\asymp \begin{dcases}
				\frac{\varepsilon^{q/2} |t|^{q/4}}{(|\gamma_{k_0} (k_0)|^{-1/2} + \varepsilon |t|^{1/2})^{q/2}}, & \text{for} \quad 0 \le q \le 4 \quad (\text{and} \quad 0 < \varepsilon |t|^{-1/2} \le \mathfrak{e}),
				\\
				\vphantom{\frac{0}{0}} |\gamma_{k_0} (k_0)| \, \varepsilon^2 |t|, & \text{for} \quad q > 4,
			\end{dcases}
		\end{multlined}
		\\
		\label{lemma2_est_2}
		&\begin{multlined}[c][0.9\textwidth]
			\left\| \left[ \frac{\varepsilon^{q}}{((k-k_0)^2 + \varepsilon^2)^{q/2}} \sin\left( \frac{1}{2} t \varepsilon^{-1} |k-k_0|^3 \tilde{\gamma}_{k_0} (k) \right) \chi_\mathfrak{K}(k-k_0) \right] \right\|_{L_2(\mathbb{R}) \to L_2(\mathbb{R})}  
			\\
			\asymp \begin{dcases}
				\frac{\varepsilon^{2q/3} |t|^{q/3}}{(|\tilde{\gamma}_{k_0} (k_0)|^{-2/3} + \varepsilon^{4/3} |t|^{2/3})^{q/2}}, & \text{for} \quad 0 \le q \le 3 \quad (\text{and} \quad 0 < \varepsilon |t|^{-1} \le \tilde{\mathfrak{e}}),
				\\
				\vphantom{\frac{0}{0}} |\tilde{\gamma}_{k_0} (k_0)| \, \varepsilon^2 |t|, & \text{for} \quad q > 3,
			\end{dcases}
		\end{multlined}
		\\
		\label{lemma2_est_3}
		&\begin{multlined}[c][0.8\textwidth]
		\left\| \left[ \frac{\varepsilon^{r}}{((k-k_0)^2 + \varepsilon^2)^{r/2}} |k-k_0|^{-1}  \sin\left( \frac{1}{2}  t \varepsilon^{-1} |k-k_0|^3 \tilde{\gamma}_{k_0} (k) \right) \chi_\mathfrak{K}(k-k_0) \right] \right\|_{L_2(\mathbb{R}) \to L_2(\mathbb{R})}  
		\\
		\asymp \begin{dcases}
			\cfrac{\varepsilon^{(2r-1)/3}|t|^{(r+1)/3}}{|\tilde{\gamma}_{k_0} (k_0)|^{-1/3} (|\tilde{\gamma}_{k_0} (k_0)|^{-2/3} + \varepsilon^{4/3} |t|^{2/3})^{r/2}}, & \text{for} \quad -1 \le r \le 2 \; (\text{and} \;\; 0 < \varepsilon |t|^{-1} \le \tilde{\mathfrak{e}}),
			\\
			\vphantom{\frac{0}{0}}  |\tilde{\gamma}_{k_0} (k_0)| \, \varepsilon |t|, & \text{for} \quad r > 2.
		\end{dcases}
		\end{multlined}
	\end{align}
	Here the notation $X \asymp Y$ means that $c_1 Y \le X \le c_2 Y$ with some constants $c_1$ and $c_2$, which may depend only on $q$ and $r$\textup{;} $\mathfrak{e} \coloneqq (2\pi)^{-1/2}  |\gamma_{k_0} (k_0)|^{1/2} \kappa^2$, $\tilde{\mathfrak{e}} \coloneqq (2\pi)^{-1}  |\tilde{\gamma}_{k_0} (k_0)| \kappa^{3}$.
\end{lemma}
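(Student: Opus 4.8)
The plan is to reduce everything to an elementary $L_\infty$-bound. Since $[m(\cdot)]$ denotes multiplication by $m$ in $L_2(\mathbb{R})$, one has $\|[m]\|_{L_2(\mathbb{R})\to L_2(\mathbb{R})}=\operatorname{ess\,sup}_k|m(k)|$, so each of \eqref{lemma2_est_1}--\eqref{lemma2_est_3} is the assertion that the supremum over $\mathfrak{K}$ of an explicit scalar function is of the stated order. First I would change variables to $\xi=k-k_0$, $|\xi|\le\kappa$, and invoke \eqref{delta_fix} (resp.\ \eqref{delta_fix2}) to replace $\gamma_{k_0}(k_0+\xi)$ (resp.\ $\tilde{\gamma}_{k_0}(k_0+\xi)$) by $\gamma_{k_0}(k_0)$ (resp.\ $\tilde{\gamma}_{k_0}(k_0)$) up to a factor in $[1/2,3/2]$; this affects only the constants hidden in $\asymp$. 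All three estimates then take the shape $\sup_{|\xi|\le\kappa}A(\xi)\,|\sin\Phi(\xi)|$, where $A(\xi)$ is $\varepsilon^q(\xi^2+\varepsilon^2)^{-q/2}$ in \eqref{lemma2_est_1}--\eqref{lemma2_est_2} and $\varepsilon^r(\xi^2+\varepsilon^2)^{-r/2}|\xi|^{-1}$ in \eqref{lemma2_est_3}, and where the phase $\Phi$ is continuous, vanishes at $\xi=0$, keeps a constant sign on $\mathfrak{K}\setminus\{0\}$ (because $\gamma_{k_0}$, $\tilde{\gamma}_{k_0}$ are continuous and bounded away from $0$ there), and satisfies $|\Phi(\xi)|\asymp P|\xi|^m$ uniformly on $\mathfrak{K}$, with $(m,P)=(4,\,|t|\varepsilon^{-2}|\gamma_{k_0}(k_0)|)$ for \eqref{lemma2_est_1} and $(m,P)=(3,\,|t|\varepsilon^{-1}|\tilde{\gamma}_{k_0}(k_0)|)$ for \eqref{lemma2_est_2}--\eqref{lemma2_est_3}.

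For the upper bound I would use $|\sin\Phi(\xi)|\le\min(1,|\Phi(\xi)|)\le C\min(1,P|\xi|^m)$ and estimate $\sup_\xi A(\xi)\min(1,P|\xi|^m)$ by hand. Let $\xi_0:=P^{-1/m}$ be the scale at which the phase becomes of order one; the restrictions $0<\varepsilon|t|^{-1/2}\le\mathfrak{e}$ in \eqref{lemma2_est_1} and $0<\varepsilon|t|^{-1}\le\tilde{\mathfrak{e}}$ in \eqref{lemma2_est_2}--\eqref{lemma2_est_3} are exactly what forces $\xi_0\le\kappa$, so that the critical scale lies inside $\mathfrak{K}$. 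On $[0,\xi_0]$ the minimum equals $P|\xi|^m$ and $A(\xi)P|\xi|^m$ is increasing as long as the exponent is below the relevant threshold ($q\le4$, $q\le3$, $r\le2$ respectively, the shift caused by the extra $|\xi|^{-1}$); on $[\xi_0,\kappa]$ the minimum equals $1$ and $A$ is decreasing — for $-1\le r<0$ one checks directly that $\varepsilon^r(\xi^2+\varepsilon^2)^{|r|/2}|\xi|^{-1}$ is still decreasing. Hence in this regime the supremum is $A(\xi_0)$ up to a constant, and a short computation (factor $\varepsilon$ out of $\xi_0^2+\varepsilon^2$ and homogenise in $|t|$) identifies $A(\xi_0)$ with the first branch of the right-hand side. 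When the exponent exceeds the threshold, $A(\xi)P|\xi|^m$ has an interior maximum at $\xi\asymp\varepsilon$ (differentiate $\log(\xi^m(\xi^2+\varepsilon^2)^{-q/2})$); evaluating there yields $\asymp P\varepsilon^m$ for \eqref{lemma2_est_1}, \eqref{lemma2_est_2} and $\asymp P\varepsilon^2$ for \eqref{lemma2_est_3}, i.e.\ exactly the second branches $|\gamma_{k_0}(k_0)|\varepsilon^2|t|$, $|\tilde{\gamma}_{k_0}(k_0)|\varepsilon^2|t|$, $|\tilde{\gamma}_{k_0}(k_0)|\varepsilon|t|$.

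For the matching lower bound it suffices to exhibit one good point. In the low-exponent regime, $P|\xi|^m$ is continuous, strictly increasing in $|\xi|$, vanishes at $0$ and satisfies $P\kappa^m\ge2\pi$ (again by the regime restriction), while $|\Phi(\xi)|\asymp P|\xi|^m$ with constant sign; hence by the intermediate value theorem there is $\xi_*\in(0,\kappa]$ with $|\Phi(\xi_*)|=\pi/2$, so $|\sin\Phi(\xi_*)|=1$ and $P\xi_*^m\asymp1$, whence $\xi_*\asymp\xi_0$ and $A(\xi_*)\asymp A(\xi_0)$; therefore the norm is $\ge A(\xi_*)\asymp A(\xi_0)$, which matches the upper bound. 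In the remaining regime one takes instead $\xi_*\asymp\varepsilon$ (inside $\mathfrak{K}$ since $\varepsilon\le1$), where $A(\xi_*)\asymp1$ (resp.\ $\asymp\varepsilon^{-1}$ for \eqref{lemma2_est_3}) and, the phase being $\lesssim1$ there, $|\sin\Phi(\xi_*)|\asymp|\Phi(\xi_*)|\asymp P\varepsilon^m$; this reproduces the second branch from below.

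The main obstacle will be the bookkeeping rather than any single estimate: three inequalities, each with two exponent branches, inside each branch a further split according to whether $\xi_0\le\varepsilon$ or $\xi_0>\varepsilon$, plus the boundary behaviour at $q=4$, $q=3$, $r=2$ and the $-1\le r<0$ sub-case in \eqref{lemma2_est_3}. The one computation that must be done with care is checking that $A(P^{-1/m})$ collapses to the stated closed forms — e.g.\ for \eqref{lemma2_est_1} one has $\xi_0^2+\varepsilon^2=\varepsilon(\varepsilon+|t|^{-1/2}|\gamma_{k_0}(k_0)|^{-1/2})$, so $A(\xi_0)=\varepsilon^{q/2}(\varepsilon+|t|^{-1/2}|\gamma_{k_0}(k_0)|^{-1/2})^{-q/2}=\varepsilon^{q/2}|t|^{q/4}(|\gamma_{k_0}(k_0)|^{-1/2}+\varepsilon|t|^{1/2})^{-q/2}$, and similarly with $m=3$ for the other two — while the monotonicity statements, the location of the interior maximum, and the various $\asymp$ with constants depending only on $q,r$ are routine calculus.
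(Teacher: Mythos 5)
Your proposal is correct and follows essentially the same route as the paper's proof: reduce to the $L_\infty$-norm of the symbol, freeze $\gamma_{k_0}$ (resp.\ $\tilde{\gamma}_{k_0}$) at $k_0$ via \eqref{delta_fix}--\eqref{delta_fix2}, locate the critical scale where the phase first reaches order one (the paper's $\check{k}=\max K$, sandwiched between $\check{k}'_1$ and $\check{k}'_2$, is exactly your $\xi_0$ up to constants), and then use monotonicity of $\xi^m(\xi^2+\varepsilon^2)^{-q/2}$ below the threshold exponent and its interior maximum at $\xi\asymp\varepsilon$ above it. The only cosmetic difference is that the paper bounds $|\sin y|$ between two explicit $\pi$-periodic piecewise-linear functions $h_1\le|\sin|\le h_2$ and evaluates at $\check{k}$, where you use $\min(1,|\Phi|)$ for the upper bound and the intermediate value theorem to produce the point with $|\sin\Phi|=1$ for the lower bound; the content is identical.
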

\begin{proof}
	Let us prove estimate~\eqref{lemma2_est_1}. Estimates~\eqref{lemma2_est_2}, \eqref{lemma2_est_3} can be proved in a similar way. We have
	\begin{equation}
		\label{lemma2_norm}
		\begin{multlined}[c][0.8\textwidth]
			\left\| \left[ \varepsilon^{q}((k-k_0)^2 + \varepsilon^2)^{-q/2} \sin\left( \frac{1}{2} t \varepsilon^{-2} (k-k_0)^4 \gamma_{k_0} (k) \right) \chi_\mathfrak{K}(k-k_0) \right] \right\|_{L_2(\mathbb{R}) \to L_2(\mathbb{R})} \\ = \left\| \varepsilon^{q}((k-k_0)^2 + \varepsilon^2)^{-q/2} \sin\left( \frac{1}{2} t \varepsilon^{-2} (k-k_0)^4 \gamma_{k_0} (k) \right) \chi_\mathfrak{K}(k-k_0) \right\|_{L_\infty}.
		\end{multlined}
	\end{equation}
	Let us estimate the norm in the right-hand side of~\eqref{lemma2_norm}. Introduce the functions
	\begin{equation*}
		h_1(y) = \left\{
		\begin{aligned}
			&\tfrac{2}{\pi}y, & &\text{for} \, y \in [0, \pi/2],\\
			&\tfrac{2}{\pi}(\pi - y), & &\text{for} \, y \in (\pi/2, \pi],
		\end{aligned}
		\right. 
		\qquad
		h_2(y) = \left\{
		\begin{aligned}
			&y, & &\text{for} \, y \in [0, \pi/2],\\
			&\pi - y, & &\text{for} \, y \in (\pi/2, \pi],
		\end{aligned}
		\right.
	\end{equation*}
	and $1$-periodically extend them to $\mathbb{R}$.
	We have 
	\begin{equation*}
		h_1 (y) \le |\sin y| \le h_2 (y).
	\end{equation*}
	We shall estimate maxima of the functions
	\begin{equation}
		\label{lemma2_f1}
		\varepsilon^{q}((k-k_0)^2 + \varepsilon^2)^{-q/2} h_j \left( \frac{1}{2} t \varepsilon^{-2} (k-k_0)^4 \gamma_{k_0} (k) \right) \chi_\mathfrak{K}(k-k_0), \qquad j=1,2,
	\end{equation}
	from below (for $j=1$) and from above (for $j=2$). Since $\varepsilon^{q}(k^2 + \varepsilon^2)^{-q/2}$ is monotone decreasing in $k > 0$, and $h_1$, $h_2$ are periodic, maxima of functions~\eqref{lemma2_f1} are reached on the set
	\begin{equation*}
		K = \left\{k \colon \frac{1}{2} |t| \varepsilon^{-2} (k-k_0)^4 |\gamma_{k_0} (k)|  \le \frac{\pi}{2} \right\}.
	\end{equation*} 
	
	Consider the sets
	\begin{align}
		\label{K1'}
		K'_1 &= \left\{k \colon \frac{3}{4} |t| \varepsilon^{-2} \cdot |\gamma_{k_0} (k_0)| (k-k_0)^4  \le \frac{\pi}{2} \right\},\\
		\label{K2'}
		K'_2 &= \left\{k \colon \frac{1}{4} |t| \varepsilon^{-2} \cdot |\gamma_{k_0} (k_0)| (k-k_0)^4  \le \frac{\pi}{2} \right\}.
	\end{align} 
	From~\eqref{delta_fix} it follows that $K'_1 \subset K \subset K'_2$. Let $\check{k} \coloneqq \max K$ and $\check{k}'_{1,2} \coloneqq \max K'_{1,2}$. We have
	\begin{equation*}
		\check{k}'_1 = k_0 + (2\pi/3)^{1/4} |\gamma_{k_0} (k_0)|^{-1/4}  \varepsilon^{1/2} |t|^{-1/4}, \quad \check{k}'_2 = k_0 + (2\pi)^{1/4} |\gamma_{k_0} (k_0)|^{-1/4}  \varepsilon^{1/2} |t|^{-1/4}
	\end{equation*}
	(maxima are reached when we have equalities in~\eqref{K1'}, \eqref{K2'}), and
	\begin{equation}
		\label{k_check_est}
		|\check{k}'_1 - k_0| \le |\check{k} - k_0| \le |\check{k}'_2 - k_0|.
	\end{equation}
	Consider function~\eqref{lemma2_f1} with $j=2$. The right inequality in~\eqref{delta_fix} implies
	\begin{multline*}
		\varepsilon^{q}((k-k_0)^2 + \varepsilon^2)^{-q/2} h_2 \left( \frac{1}{2} t \varepsilon^{-2} (k-k_0)^4 \gamma_{k_0} (k) \right) \chi_\mathfrak{K}(k-k_0) \\ = \varepsilon^{q}((k-k_0)^2 + \varepsilon^2)^{-q/2} \cdot \frac{1}{2} |t| \varepsilon^{-2} (k-k_0)^4 |\gamma_{k_0} (k)| \chi_\mathfrak{K}(k-k_0) \\ \le \varepsilon^{q}((k-k_0)^2 + \varepsilon^2)^{-q/2} \cdot \frac{3}{4} |t| \varepsilon^{-2} |\gamma_{k_0} (k_0)| (k-k_0)^4 \chi_\mathfrak{K}(k-k_0).
	\end{multline*}
	The function $k^4 (k^2 + \varepsilon^2)^{-q/2}$ is even, and monotonically increases in $k \ge 0$ for $0 \le q \le 4$. We substitute $k = \check{k}$ in the right-hand side of the inequality obtained above, and estimate it using the right inequality in~\eqref{k_check_est}:
	\begin{equation*}
		\begin{multlined}[c][0.9\textwidth]
			\varepsilon^{q}((\check{k}-k_0)^2 + \varepsilon^2)^{-q/2} \cdot \frac{3}{4} |t| \varepsilon^{-2} |\gamma_{k_0} (k_0)| (\check{k}-k_0)^4 
			\\
			\le 
			\varepsilon^{q}\left( (2\pi)^{1/2} |\gamma_{k_0} (k_0)|^{-1/2}  \varepsilon |t|^{-1/2} + \varepsilon^2\right) ^{-q/2} \cdot \frac{3}{4} |t| \varepsilon^{-2} |\gamma_{k_0} (k_0)| \cdot (2\pi) |\gamma_{k_0} (k_0)|^{-1}  \varepsilon^2 |t|^{-1}
			\\
			= \frac{3 \pi}{2}  \cdot \varepsilon^{q}\left( (2\pi)^{1/2} |\gamma_{k_0} (k_0)|^{-1/2}  \varepsilon |t|^{-1/2} + \varepsilon^2\right)^{-q/2} = \frac{3 \pi}{2}  \cdot \frac{\varepsilon^{q/2} |t|^{q/4}}{\bigl((2\pi)^{1/2} |\gamma_{k_0} (k_0)|^{-1/2}  + \varepsilon |t|^{1/2}\bigr)^{q/2}}, 
			\\
			0 < \varepsilon |t|^{-1/2} \le \mathfrak{e}, \quad 0 \le q \le 4.
		\end{multlined}
	\end{equation*}
	Here the inclusion $K'_2 \subset (k_0 - \kappa, k_0 + \kappa)$ (which follows from $0 < \varepsilon |t|^{-1/2} \le \mathfrak{e}$) is taken into account. For $q > 4$ the function $k^4 (k^2 + \varepsilon^2)^{-q/2}$, $k \ge 0$, has the maximum $16 q^{-q/2} (q-4)^{q/2-2} \varepsilon^{4-q}$. Then
	\begin{multline*}
		\varepsilon^{q}((k-k_0)^2 + \varepsilon^2)^{-q/2} \cdot \frac{3}{4} |t| \varepsilon^{-2} |\gamma_{k_0} (k_0)| (k-k_0)^4 \\ \le 12 q^{-q/2} (q-4)^{q/2-2} |\gamma_{k_0} (k_0)| \varepsilon^{2} |t|, 
		\qquad	q > 4.
	\end{multline*}
	As a result, we have obtained the upper estimate in~\eqref{lemma2_est_1}.
	
	Now we consider function~\eqref{lemma2_f1} with $j=1$. From the left inequality in~\eqref{delta_fix} it follows that
	\begin{multline*}
		\varepsilon^{q}((k-k_0)^2 + \varepsilon^2)^{-q/2} h_1 \left( \frac{1}{2} t \varepsilon^{-2} (k-k_0)^4 \gamma_{k_0} (k) \right) \chi_\mathfrak{K}(k-k_0) \\ \ge \varepsilon^{q}((k-k_0)^2 + \varepsilon^2)^{-q/2} \cdot \frac{1}{2 \pi} |t| \varepsilon^{-2} |\gamma_{k_0} (k_0)| (k-k_0)^4 \chi_\mathfrak{K}(k-k_0).
	\end{multline*}
	For $0 \le q \le 4$ we substitute $k = \check{k}$ in the right-hand side of this inequality, and estimate it using the left inequality in~\eqref{k_check_est}: 
	\begin{multline*}
			\varepsilon^{q}((\check{k}-k_0)^2 + \varepsilon^2)^{-q/2} \cdot \frac{1}{2 \pi} |t| \varepsilon^{-2} |\gamma_{k_0} (k_0)| (\check{k}-k_0)^4 \\ 
			\ge \frac{1}{3}  \cdot \frac{\varepsilon^{q/2} |t|^{q/4}}{\bigl((2\pi/3)^{1/2} |\gamma_{k_0} (k_0)|^{-1/2}  + \varepsilon |t|^{1/2}\bigr)^{q/2}},
			\qquad
			0 < \varepsilon |t|^{-1/2} \le \mathfrak{e}, \quad 0 \le q \le 4,
	\end{multline*}	
	and for $q > 4$ we have
	\begin{multline*}
		\max_{k} \varepsilon^{q}((k-k_0)^2 + \varepsilon^2)^{-q/2} h_1 \left( \frac{1}{2} t \varepsilon^{-2} (k-k_0)^4 \gamma_{k_0} (k) \right) \chi_\mathfrak{K}(k-k_0) \\ \ge \frac{8}{\pi} q^{-q/2} (q-4)^{q/2-2} |\gamma_{k_0} (k_0)| \varepsilon^{2} |t|, \qquad q > 4.
	\end{multline*}
	Thereby, the lower estimate in~\eqref{lemma2_est_1} is proved.
\end{proof}

\section{Main results of the paper}
\label{main_results_section}
In this section, we formulate the main results of the paper. We give detailed proofs in the cases of Conditions~\ref{cond1} and~\ref{cond3}, and we only formulate the results in the cases of Conditions~\ref{cond2} and~\ref{cond4}.

\subsection{The case where Condition~\ref{cond1} is fulfilled.}
Let $\varepsilon > 0$ be a small parameter. In $L_2(\mathbb{R})$, we consider the operator formally defined by the differential expression
\begin{equation}
	\label{A_eps}
	A_\varepsilon = - (\omega^\varepsilon)^{-1} \frac{d}{d x} g^\varepsilon \frac{d}{d x}(\omega^\varepsilon)^{-1}, \quad g = \check{g} \omega^2.
\end{equation}
Here $\check{g}$ and $\omega$ are $1$-periodic functions satisfying conditions~\eqref{g_check_cond}, \eqref{omega_cond1}, and \eqref{omega_cond2}. The precise definition of the operator $A_\varepsilon$ is given in terms of the corresponding quadratic form (cf.~\eqref{a_form_2}). Operators~\eqref{A} and~\eqref{A_eps} satisfy the following relation:
\begin{equation*}
	A_\varepsilon = \varepsilon^{-2} T_\varepsilon^* A T_\varepsilon,
\end{equation*}
where $T_\varepsilon$ is the operator of scaling transformation: $(T_\varepsilon u)(x) = \varepsilon^{1/2} u(\varepsilon x)$.

Now it is convenient to realize points of the circle in~\eqref{Psi_l} in the following way:
\begin{equation*}
	\Psi_l \colon L_2(\mathbb{R}) \to L_2(\widetilde{\Omega}_{l-s+1}),  \qquad l \ge s.
\end{equation*}
Recall that $\widetilde{\Omega}_j$, $j \in \mathbb{N}$, were defined in~\eqref{Brillouin_zones}. Let $f \in L_2(\mathbb{R})$. We study the behavior of the solution $u_\varepsilon (x,t)$, $\varepsilon \to 0$, of the following Cauchy problem for the nonstationary Schr\"{o}dinger equation
\begin{equation}
	\label{cond1_Cauchy_problem_Schrod}
	\left\{
		\begin{aligned}
			&i \frac{\partial}{\partial t} u_\varepsilon (x,t) = (A_\varepsilon u_\varepsilon)(x,t),\\
			&u_\varepsilon (x,0) = (\Upsilon^{(+)}_\varepsilon f)(x),
		\end{aligned}
	 \right.
\end{equation} 
where
\begin{align*}
	&(\Upsilon^{(+)}_\varepsilon f)(x) \coloneqq (2 \pi)^{-1/2} \int_\mathbb{R} (\Phi f) (k) \sum_{j=s}^{\infty} e^{ikx} \varphi_j(x/\varepsilon, \varepsilon k) \chi_{\widetilde{\Omega}_{j-s+1}} (\varepsilon k) \, dk,
	\\
	&\Upsilon^{(+)}_\varepsilon f \in \mathcal{E}_{A_\varepsilon}[\varepsilon^{-2} \sigma_0, \infty) L_2(\mathbb{R}).
\end{align*}

In $L_2(\mathbb{R})$, we consider the operator $A^{\mathrm{hom}}_0 =  - b_0 \frac{d^2}{d x^2}$, $\Dom A^{\mathrm{hom}}_0 = H^2(\mathbb{R})$, which is called \emph{the effective operator at the left edge $\sigma_{0} = E(0)$ of the band $R(E)$}. Let $u_0 (x,t)$ be the solution of the corresponding "homogenized" problem
\begin{equation}
	\label{cond1_Cauchy_problem_Schrod_homog}
	\left\{
	\begin{aligned}
		&i \frac{\partial}{\partial t} u_0 (x,t) = (A^{\mathrm{hom}}_0 u_0)(x,t) ,\\
		&u_0 (x,0) = f (x).
	\end{aligned}
	\right.
\end{equation}
The solutions of problems~\eqref{cond1_Cauchy_problem_Schrod} and~\eqref{cond1_Cauchy_problem_Schrod_homog} can be represented as follows:
\begin{equation}
	\label{u_represent}
	u_\varepsilon = e^{-i t A_\varepsilon}  \Upsilon^{(+)}_\varepsilon f, \qquad u_0 = e^{-i t A^{\mathrm{hom}}_0} f,
\end{equation}
where $\Upsilon^{(+)}_\varepsilon \coloneqq \sum_{j=s}^\infty T_\varepsilon^* \Psi_j^* R_{j-s+1}  \Phi T_\varepsilon$, and  $R_l$ is the operator of restriction to $\widetilde{\Omega}_l$.
\begin{thrm}
	\label{cond1_Schrod_thrm}
	Suppose that Condition~\textup{\ref{cond1}} and~\eqref{gamma_ne_0} are fulfilled. Let $u_\varepsilon$ be the solution of problem~\eqref{cond1_Cauchy_problem_Schrod}, and let $u_0$ be the solution of problem~\eqref{cond1_Cauchy_problem_Schrod_homog}. Let $t \ne 0$, $f \in H^q(\mathbb{R})$, $0 \le q \le 2$. We have
	\begin{equation}
		\label{cond1_Schrod_thrm_est}
		\| u_\varepsilon(\cdot, t) - e^{-i t \varepsilon^{-2} \sigma_0} \varphi_0^{\varepsilon}  u_0(\cdot, t)\|_{L_2(\mathbb{R})} \le C (1+|t|^{q/4}) \varepsilon^{q/2} \|f\|_{H^q(\mathbb{R})}, \qquad 0 < \varepsilon |t|^{-1/2} \le \mathfrak{e},
	\end{equation}
	with the constant $C = C(q, \kappa, \| \varphi_{0} \|_{L_\infty}, |\gamma_{0} (0)|, \|\theta_0\|_{M_\kappa(0)})$.
\end{thrm}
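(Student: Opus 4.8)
I would pass to the ``scaled'' picture and then use the Floquet--Bloch decomposition together with Lemmas~\ref{lemma1} and~\ref{lemma2}. Since $A_\varepsilon=\varepsilon^{-2}T_\varepsilon^*AT_\varepsilon$ with $T_\varepsilon$ unitary, we have $e^{-itA_\varepsilon}=T_\varepsilon^*e^{-it\varepsilon^{-2}A}T_\varepsilon$, and by~\eqref{decomp}, $e^{-it\varepsilon^{-2}A}=\sum_{l\ge1}\Psi_l^*[e^{-it\varepsilon^{-2}E_l}]\Psi_l$. Starting from the representation $u_\varepsilon=e^{-itA_\varepsilon}\Upsilon_\varepsilon^{(+)}f$ in~\eqref{u_represent} and using that the projections $\Psi_l^*\Psi_l$ are pairwise orthogonal with $\sum_{l\ge1}\Psi_l^*\Psi_l=I$ (so that $\Psi_j\Psi_j^*=I$ and $\Psi_j\Psi_l^*=0$ for $j\ne l$) together with $\sum_{l\ge1}\chi_{\widetilde\Omega_l}\equiv1$, one gets
\begin{equation*}
	T_\varepsilon u_\varepsilon(\cdot,t)=\sum_{j\ge s}\Psi_j^*\bigl[e^{-it\varepsilon^{-2}E_j}\chi_{\widetilde\Omega_{j-s+1}}\bigr]\Phi T_\varepsilon f.
\end{equation*}
On the homogenized side, $u_0=\Phi^*[e^{-itb_0k^2}]\Phi f$, and using $\Phi T_\varepsilon=M_\varepsilon\Phi$ with $(M_\varepsilon w)(k)=\varepsilon^{-1/2}w(k/\varepsilon)$ together with $[\varphi_0^\varepsilon]=T_\varepsilon^*[\varphi_0]T_\varepsilon$, a direct computation gives
\begin{equation*}
	T_\varepsilon\bigl(e^{-it\varepsilon^{-2}\sigma_0}\varphi_0^\varepsilon u_0(\cdot,t)\bigr)=e^{-it\varepsilon^{-2}\sigma_0}[\varphi_0]\Phi^*\bigl[e^{-it\varepsilon^{-2}b_0k^2}\bigr]\Phi T_\varepsilon f.
\end{equation*}
Since $T_\varepsilon$ is unitary, it remains to bound the $L_2(\mathbb{R})$-norm of the difference of the two right-hand sides; note also that the substitution $\xi=k/\varepsilon$ yields the identity $\|f\|_{H^q(\mathbb{R})}=\bigl\|\bigl[\varepsilon^{-q}(|k|^2+\varepsilon^2)^{q/2}\bigr]\Phi T_\varepsilon f\bigr\|_{L_2(\mathbb{R})}$, which is precisely the weight appearing in Lemmas~\ref{lemma1} and~\ref{lemma2}.

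Next I would split the $k$-axis into $\mathfrak{K}=\{|k|\le\kappa\}$ (inside the band $j=s$), $\widetilde\Omega_1\setminus\mathfrak{K}$ (still $j=s$), and $\{|k|>\pi\}$ (which covers the bands $j>s$). On the last two regions the multiplier $\varepsilon^q(|k|^2+\varepsilon^2)^{-q/2}$ is bounded by $\varepsilon^q\kappa^{-q}$, resp.\ $\varepsilon^q\pi^{-q}$; since the $\Psi_j^*$ and $\Phi^*$ are contractions, the $\Psi_j^*$ have mutually orthogonal ranges, and $\|[\varphi_0]\|\le\|\varphi_0\|_{L_\infty}$, both the $u_\varepsilon$-part and the homogenized part restricted to these regions have norm $\le C\varepsilon^q\|f\|_{H^q}\le C\varepsilon^{q/2}\|f\|_{H^q}$ (using $\varepsilon\le1$, $q\ge0$). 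Thus everything reduces to the region $\mathfrak{K}$ in the band $j=s$.

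On $\mathfrak{K}$ I would invoke~\eqref{E_powerseries_1} (here $k_0=0$, sign ``$+$''), which gives $e^{-it\varepsilon^{-2}E(k)}=e^{-it\varepsilon^{-2}\sigma_0}e^{-it\varepsilon^{-2}b_0k^2}e^{-it\varepsilon^{-2}k^4\gamma_0(k)}$, and write the corresponding part of the difference, after pulling out the unimodular scalar $e^{-it\varepsilon^{-2}\sigma_0}$, as $I+II$ with
\begin{equation*}
	I=(\Psi^*-[\varphi_0]\Phi^*)\bigl[e^{-it\varepsilon^{-2}b_0k^2}e^{-it\varepsilon^{-2}k^4\gamma_0(k)}\chi_\mathfrak{K}\bigr]\Phi T_\varepsilon f,\qquad
	II=[\varphi_0]\Phi^*\bigl[(e^{-it\varepsilon^{-2}k^4\gamma_0(k)}-1)e^{-it\varepsilon^{-2}b_0k^2}\chi_\mathfrak{K}\bigr]\Phi T_\varepsilon f.
\end{equation*}
For $I$, inserting the factor $\varepsilon^q(|k|^2+\varepsilon^2)^{-q/2}\cdot\varepsilon^{-q}(|k|^2+\varepsilon^2)^{q/2}$, discarding the unimodular multipliers, and applying~\eqref{lemma1_est_1} gives $\|I\|\le C\varepsilon^{\min(1,q)}\|f\|_{H^q}\le C\varepsilon^{q/2}\|f\|_{H^q}$ for $0\le q\le2$. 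For $II$, using $|e^{i\theta}-1|=2|\sin(\theta/2)|$, $\|[\varphi_0]\|\le\|\varphi_0\|_{L_\infty}$, the isometry of $\Phi^*$, and the same weight splitting, $\|II\|$ is bounded by $2\|\varphi_0\|_{L_\infty}\|f\|_{H^q}$ times the $L_\infty$-norm of the multiplier in~\eqref{lemma2_est_1}; since the denominator there is $\ge|\gamma_0(0)|^{-q/4}$, estimate~\eqref{lemma2_est_1} (valid for $0\le q\le4$ and $0<\varepsilon|t|^{-1/2}\le\mathfrak{e}$) yields $\|II\|\le C|\gamma_0(0)|^{q/4}\varepsilon^{q/2}|t|^{q/4}\|f\|_{H^q}$. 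Collecting the bounds for $I$, $II$ and the three boundary/tail contributions and tracking the constants produces~\eqref{cond1_Schrod_thrm_est} with the stated dependence of $C$.

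The analytically delicate point is already packaged in Lemma~\ref{lemma2}: the sharp $L_\infty$-estimate for $\varepsilon^q(|k|^2+\varepsilon^2)^{-q/2}\sin\bigl(\tfrac12 t\varepsilon^{-2}k^4\gamma_0(k)\bigr)\chi_\mathfrak{K}$ is exactly what produces the $(1+|t|^{q/4})$-factor and the exponent $\varepsilon^{q/2}$. Hence I expect the real obstacle to be the Floquet--Bloch bookkeeping of the first paragraph — keeping track of the precise realizations $\Psi_j\colon L_2(\mathbb{R})\to L_2(\widetilde\Omega_{j-s+1})$, the relations $\Psi_j\Psi_l^*=\delta_{jl}I$ and $\sum_{l}\chi_{\widetilde\Omega_l}\equiv1$, and the orthogonality of the ranges of the $\Psi_j^*$ — together with the uniform control of the infinitely many high bands $j>s$, rather than any individual estimate.
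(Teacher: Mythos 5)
Your proof is correct and follows essentially the same route as the paper's: scaling to $e^{-it\varepsilon^{-2}A}$, reduction to a weighted $(L_2\to L_2)$ operator norm via the Sobolev isometry, splitting off $|k|>\kappa$, Lemma~\ref{lemma1} for the replacement of $\Psi^*$ by $[\varphi_0]\Phi^*$, and the upper bound in~\eqref{lemma2_est_1} for the difference of exponentials. The only (harmless) deviation is that your splitting $I+II$ attaches the difference-of-exponentials multiplier to $[\varphi_0]\Phi^*$ rather than to $\Psi^*$ as in the paper; since all the extra factors involved are unimodular, this changes nothing.
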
 
\begin{proof}
	By~\eqref{u_represent}, estimate~\eqref{cond1_Schrod_thrm_est} can be reformulated in the operator terms:
	\begin{equation}
		\label{case1_Schr_thrm_est_operat}
		\Bigl\| e^{-i t A_\varepsilon} \Upsilon^{(+)}_\varepsilon - e^{-i t \varepsilon^{-2} \sigma_0} [\varphi_0^{\varepsilon}] e^{-i t A^{\mathrm{hom}}_0} \Bigr\|_{H^q (\mathbb{R}) \to L_2(\mathbb{R})} \le C (1+|t|^{q/4}) \varepsilon^{q/2},
	\end{equation}
	where $t \ne 0$, $0 < \varepsilon |t|^{-1/2} \le \mathfrak{e}$, $0 \le q \le 2$. Thus, our aim is to prove~\eqref{case1_Schr_thrm_est_operat}.  Since
	\begin{equation}
		\label{Sobolev_isomorhism}
		\left.
		\begin{aligned}
			&\text{the operator } (-d_x^2 + I)^{q/2} \text{ is an isometric isomorphism} \\
			&\text{of the Sobolev space } H^q(\mathbb{R}) \text{ onto } L_2(\mathbb{R}),
		\end{aligned} 
		\quad \right\rbrace		
	\end{equation}
	we have
	\begin{multline*}
		\Bigl\| e^{-i t A_\varepsilon} \Upsilon^{(+)}_\varepsilon - e^{-i t \varepsilon^{-2} \sigma_0} [\varphi_0^{\varepsilon}] e^{-i t A^{\mathrm{hom}}_0} \Bigr\|_{H^q (\mathbb{R}) \to L_2(\mathbb{R})} \\=  \Bigl\| \Bigl(e^{-i t A_\varepsilon} \Upsilon^{(+)}_\varepsilon - e^{-i t \varepsilon^{-2} \sigma_0} [\varphi_0^{\varepsilon}] e^{-i t A^{\mathrm{hom}}_0}\Bigr) (- d_x^2 + I)^{-q/2} \Bigr\|_{L_2 (\mathbb{R}) \to L_2(\mathbb{R})}.
	\end{multline*}
	From the unitarity of the scaling transformation it directly follows that 
	\begin{equation}
		\label{thrm1_proof_scale_transf}
		\begin{multlined}[c][0.9\textwidth]
			\Bigl\| \Bigl(e^{-i t A_\varepsilon} \Upsilon^{(+)}_\varepsilon -  e^{-i t \varepsilon^{-2} \sigma_0}[\varphi_0^{\varepsilon}] e^{-i t A^{\mathrm{hom}}_0}\Bigr) (- d_x^2 + I)^{-q/2} \Bigr\|_{L_2 (\mathbb{R}) \to L_2(\mathbb{R})} 
			\\ 
			= \Bigl\| \Bigl( e^{-i t \varepsilon^{-2} A} \Upsilon^{(+)} - e^{-i t \varepsilon^{-2} \sigma_0} [\varphi_0] e^{-i t \varepsilon^{-2} A^{\mathrm{hom}}_0} \Bigr) \varepsilon^{q} (- d_x^2 + \varepsilon^{2}I)^{-q/2} \Bigr\|_{L_2 (\mathbb{R}) \to L_2(\mathbb{R})},
		\end{multlined}
	\end{equation}
	where $\Upsilon^{(+)} \coloneqq \sum_{j=s}^\infty \Psi_j^* R_{j-s+1} \Phi$. Introduce the projection $F_\mathfrak{K} = \Phi^* [\chi_\mathfrak{K}] \Phi$. Obviously,
	\begin{equation}
		\| \varepsilon^q (- d_x^2 + \varepsilon^{2}I)^{-q/2} (I - F_\mathfrak{K}) \|_{L_2 (\mathbb{R}) \to L_2(\mathbb{R})} \le \kappa^{-q} \varepsilon^q,
	\end{equation}
	whence
	\begin{equation}
		\label{thrm_(I-F)_est}
		\begin{multlined}[c][0.9\textwidth]
			\Bigl\| \Bigl( e^{-i t \varepsilon^{-2} A} \Upsilon^{(+)} - e^{-i t \varepsilon^{-2} \sigma_0} [\varphi_0] e^{-i t \varepsilon^{-2} A^{\mathrm{hom}}_0}\Bigr) \varepsilon^{q} (- d_x^2 + \varepsilon^{2}I)^{-q/2} (I - F_\mathfrak{K}) \Bigr\|_{L_2 (\mathbb{R}) \to L_2(\mathbb{R})} \\ \le (1 + \|\varphi_0\|_{L_\infty} )  \kappa^{-q} \varepsilon^q.
		\end{multlined}
	\end{equation}
	Consider the operator $\bigl( e^{-i t \varepsilon^{-2} A} \Upsilon^{(+)} - e^{-i t \varepsilon^{-2} \sigma_0} [\varphi_0] e^{-i t \varepsilon^{-2} A^{\mathrm{hom}}_0}\bigr) \varepsilon^{q} (- d_x^2 + \varepsilon^{2}I)^{-q/2} F_\mathfrak{K}$. The following identities are true: 
	\begin{align}
		\label{thrm_f1}
		e^{-i t \varepsilon^{-2} A} \Upsilon^{(+)} \ \varepsilon^{q} (- d_x^2 + \varepsilon^{2}I)^{-q/2} F_\mathfrak{K} &= \Psi^* [e^{-i t \varepsilon^{-2} E(k)} \varepsilon^{q} (k^2 + \varepsilon^{2})^{-q/2} \chi_\mathfrak{K}(k)] \Phi,\\
		\label{thrm_f2}
		[\varphi_0] e^{-i t \varepsilon^{-2} A^{\mathrm{hom}}_0} \varepsilon^{q} (- d_x^2 + \varepsilon^{2}I)^{-q/2} F_\mathfrak{K} &= [\varphi_0] \Phi^* [e^{-i t \varepsilon^{-2} b_0 k^2} \varepsilon^{q} (k^2 + \varepsilon^{2})^{-q/2} \chi_\mathfrak{K}(k)] \Phi.
	\end{align}
	By~\eqref{lemma1_est_1}, we have
	\begin{equation}
		\label{thrm1_proof_lemma1_appl}
		\| (\Psi^* - [\varphi_0] \Phi^*)  [e^{-i t \varepsilon^{-2} b_0 k^2} \varepsilon^{q} (k^2 + \varepsilon^{2})^{-q/2} \chi_\mathfrak{K}(k)] \Phi  \| \le C \varepsilon^{\min(1,q)}.
	\end{equation}
	It remains to estimate $\| \Psi^* [(e^{-i t \varepsilon^{-2} E(k)} - e^{-i t \varepsilon^{-2} \sigma_0} e^{-i t \varepsilon^{-2} b_0 k^2})  \varepsilon^{q} (k^2 + \varepsilon^{2})^{-q/2} \chi_\mathfrak{K}(k)] \Phi\|_{L_2 (\mathbb{R}) \to L_2(\mathbb{R})}$. 
	From~\eqref{E_powerseries_1} it is seen that
	\begin{equation}
		\label{exp-exp_sin}
		\begin{multlined}[c][0.8\textwidth]
			e^{-i t \varepsilon^{-2} E(k)} - e^{-i t \varepsilon^{-2} \sigma_0} e^{-i t \varepsilon^{-2} b_0 k^2} = e^{-i t \varepsilon^{-2} (\sigma_0 + b_0 k^2)} (e^{-i t \varepsilon^{-2} k^4 \gamma_0 (k)} - 1)
			\\ 
			= -e^{-i t \varepsilon^{-2} (\sigma_0 + b_0 k^2)} e^{-\frac{1}{2} i t \varepsilon^{-2} k^4 \gamma_0 (k)} \cdot 2i \sin\left( \frac{1}{2} t \varepsilon^{-2} k^4 \gamma_0 (k) \right).
		\end{multlined}
	\end{equation}
	Applying upper estimate~\eqref{lemma2_est_1} and taking into account that $| e^{-i t \varepsilon^{-2} (\sigma_0 + b_0 k^2)} | = 1$, $\bigl| e^{-\frac{1}{2} i t \varepsilon^{-2} k^4 \gamma_0 (k)}\bigr| = 1$, we get
	\begin{equation}
		\label{thrm_f3}
		\begin{multlined}[c][0.8\textwidth]
			\left\| \Psi^* [(e^{-i t \varepsilon^{-2} E(k)} - e^{-i t \varepsilon^{-2} \sigma_0} e^{-i t \varepsilon^{-2} b_0 k^2})  \varepsilon^{q} (k^2 + \varepsilon^{2})^{-q/2} \chi_\mathfrak{K}(k)] \Phi \right\|_{L_2 (\mathbb{R}) \to L_2(\mathbb{R})} \\ \le  \frac{C \varepsilon^{q/2} |t|^{q/4}}{(|\gamma_{k_0} (k_0)|^{-1/2} + \varepsilon |t|^{1/2})^{q/2}}, \qquad 0 < \varepsilon |t|^{-1/2} \le \mathfrak{e}.
		\end{multlined}
	\end{equation}
	Combining~\eqref{thrm1_proof_scale_transf}, \eqref{thrm_(I-F)_est}--\eqref{thrm1_proof_lemma1_appl}, and \eqref{thrm_f3}, we arrive at the required estimate~\eqref{case1_Schr_thrm_est_operat}.
\end{proof}

Let $f, g \in L_2(\mathbb{R})$. Now we consider the behavior of the solution $v_\varepsilon (x,t)$, $\varepsilon \to 0$, of the Cauchy problem for the hyperbolic equation
\begin{equation}
	\label{cond1_Cauchy_problem_hyperb}
	\left\{
	\begin{aligned}
		&\frac{\partial^2}{\partial t^2}  v_\varepsilon (x,t) = - (A_\varepsilon v_\varepsilon)(x,t) + \varepsilon^{-2} \sigma_0 v_\varepsilon(x,t),\\
		&v_\varepsilon (x,0) = (\Upsilon^{(+)}_\varepsilon f)(x), \; (\partial_t v_\varepsilon) (x,0) = (\Upsilon^{(+)}_\varepsilon g)(x).
	\end{aligned}
	\right.
\end{equation}
Let $v_0 (x,t)$ be the solution of the corresponding "homogenized" problem
\begin{equation}
	\label{cond1_Cauchy_problem_hyperb_homog}
	\left\{
	\begin{aligned}
		&\frac{\partial^2}{\partial t^2} v_0 (x,t) = - (A^{\textrm{hom}}_0 v_0)(x,t) ,\\
		&v_0 (x,0) = f (x), \; (\partial_t v_0) (x,0) = g (x).
	\end{aligned}
	\right.
\end{equation}
The solutions of problems~\eqref{cond1_Cauchy_problem_hyperb} and~\eqref{cond1_Cauchy_problem_hyperb_homog} satisfy the following representations:
\begin{equation}
	\label{v_represent}
	\left.
	\begin{aligned}
		&v_\varepsilon = \cos \bigl(t A_{\sigma_{0}, \varepsilon}^{1/2}\bigr) \Upsilon^{(+)}_\varepsilon f + A_{\sigma_{0}, \varepsilon}^{-1/2} \sin \bigl(t A_{\sigma_{0}, \varepsilon}^{1/2}\bigr) \Upsilon^{(+)}_\varepsilon g,
		\\
		&v_0 = \cos (t (A^{\mathrm{hom}}_0)^{1/2}) f + (A^{\mathrm{hom}}_0)^{-1/2} \sin (t (A^{\mathrm{hom}}_0)^{1/2}) g,
	\end{aligned}
	\quad\right\rbrace 
\end{equation}
where $A_{\sigma_{0}, \varepsilon} \coloneqq (A_\varepsilon  - \varepsilon^{-2}\sigma_0) \mathcal{E}_{A_\varepsilon}[\varepsilon^{-2} \sigma_0, \infty)$.

\begin{thrm}
	\label{cond1_hyperb_thrm}
	Suppose that Condition~\textup{\ref{cond1}} and~\eqref{gamma_ne_0} are fulfilled. Let $v_\varepsilon$ be the solution of problem~\eqref{cond1_Cauchy_problem_hyperb}, and let $v_0$ be the solution of problem~\eqref{cond1_Cauchy_problem_hyperb_homog}. Let $t \ne 0$, $f \in H^{q}(\mathbb{R})$, $g \in H^{r}(\mathbb{R})$. We have
	\begin{align}
		\label{cond1_hyperb_thrm_est}
		&\begin{multlined}[c][0.8\textwidth]
			\| v_\varepsilon(\cdot, t) - \varphi_0^{\varepsilon}  v_0(\cdot, t)\|_{L_2(\mathbb{R})} \\ \le C \bigl( (1+|t|^{q/3}) \varepsilon^{2q/3} \|f\|_{H^{q}(\mathbb{R})} + (1+|t|^{(r+1)/3}) \varepsilon^{(2r+2)/3}  \|g\|_{H^{r}(\mathbb{R})} \bigr), \\ 0 < \varepsilon |t|^{-1} \le \tilde{\mathfrak{e}}, \quad 0 < \varepsilon \le 1, \quad 0 \le q \le 3/2, \quad 0 \le r \le 1/2,
		\end{multlined}
		\\
		\label{cond1_hyperb_thrm_est_r<0}
		&\begin{multlined}[c][0.8\textwidth]
			\| v_\varepsilon(\cdot, t) - \varphi_0^{\varepsilon}  v_0(\cdot, t)\|_{L_2(\mathbb{R})} \\ \le C \bigl( (1+|t|^{q/3}) \varepsilon^{2q/3} \|f\|_{H^{q}(\mathbb{R})} + ((1+|t|^{(r+1)/3}) \varepsilon^{(2r+2)/3} + |t|^{1/3} \varepsilon^{2/3}) \|g\|_{H^{r}(\mathbb{R})} \bigr), \\ 0 < \varepsilon |t|^{-1} \le \tilde{\mathfrak{e}}, \quad 0 < \varepsilon \le 1, \quad 0 \le q \le 3/2, \quad -1 \le r < 0,
		\end{multlined}
	\end{align}
	with the constants $C = C(q, r, \kappa, \alpha_0, \beta_0, \beta_1, \sigma_{0}, b_0, \| \varphi_{0} \|_{L_\infty}, |\tilde{\gamma}_{0} (0)|, \|\theta_0\|_{M_\kappa(0)})$.
\end{thrm}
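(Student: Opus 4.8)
The plan is to follow the scheme used for Theorem~\ref{cond1_Schrod_thrm}, now applied to the two operator functions occurring in~\eqref{v_represent}. By~\eqref{v_represent}, the estimates~\eqref{cond1_hyperb_thrm_est}, \eqref{cond1_hyperb_thrm_est_r<0} are equivalent to operator bounds which, by linearity, I split into a ``$\cos$-part'' acting on $f$,
\[
\bigl\| \cos\bigl(t A_{\sigma_0,\varepsilon}^{1/2}\bigr)\Upsilon^{(+)}_\varepsilon - [\varphi_0^\varepsilon]\cos\bigl(t (A^{\mathrm{hom}}_0)^{1/2}\bigr)\bigr\|_{H^q(\mathbb{R})\to L_2(\mathbb{R})},
\]
and an ``$A^{-1/2}\sin$-part'' acting on $g$, with $\cos$ replaced by $A^{-1/2}\sin$ and $H^q$ by $H^r$. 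Applying the unitary scaling $T_\varepsilon$ together with the isomorphism~\eqref{Sobolev_isomorhism} (exactly as in~\eqref{thrm1_proof_scale_transf}) reduces each part to an $L_2(\mathbb{R})\to L_2(\mathbb{R})$ norm of the corresponding difference of operators built from $A$, composed with $\varepsilon^{q}(-d_x^2+\varepsilon^2 I)^{-q/2}$ (resp. $\varepsilon^{r}(-d_x^2+\varepsilon^2 I)^{-r/2}$), where $A_{\sigma_0}:=(A-\sigma_0)\mathcal{E}_A[\sigma_0,\infty)$. I then insert the projection $F_\mathfrak{K}=\Phi^*[\chi_\mathfrak{K}]\Phi$. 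On $I-F_\mathfrak{K}$ I estimate crudely: for the $\cos$-part this is $\le C\varepsilon^{q}\le C\varepsilon^{2q/3}$ using $\|\cos(\cdot)\|\le1$ and $\|\Upsilon^{(+)}\|\le1$; for the $A^{-1/2}\sin$-part I first record the bound $(E_l(k)-\sigma_0)^{-1/2}\le C|k|^{-1}$, valid for all $l\ge s$ and $k\in\widetilde{\Omega}_{l-s+1}$ (a consequence of the band estimates~\eqref{E_l_estimates} and the monotonicity and location of the bands), which together with $|\sin|\le1$ gives a symbol $\le C\varepsilon|k|^{-1}\varepsilon^{r}(k^2+\varepsilon^2)^{-r/2}$, hence an $I-F_\mathfrak{K}$ contribution $\le C\varepsilon^{1+\min(0,r)}\le C\varepsilon^{(2r+2)/3}$ for $-1\le r\le1/2$. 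On $F_\mathfrak{K}$ only the band $s$ survives (since $\widetilde{\Omega}_{l-s+1}\cap\mathfrak{K}=\varnothing$ for $l>s$), and the operator collapses to a multiplier $\Psi^*[\,\cdot\,]\Phi$ with the band symbol of $A$.

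On $\mathfrak{K}$ I then replace $\Psi^*$ by $[\varphi_0]\Phi^*$ and replace $E(k)$ by its quadratic approximation. Since $\cos\bigl(t\varepsilon^{-1}(E(k)-\sigma_0)^{1/2}\bigr)$ is unimodular, the replacement of $\Psi^*$ by $[\varphi_0]\Phi^*$ for the $\cos$-part costs $C\varepsilon^{\min(1,q)}\le C\varepsilon^{2q/3}$ by Lemma~\ref{lemma1}\,\eqref{lemma1_est_1}; for the $A^{-1/2}\sin$-part I use that $\varepsilon(E(k)-\sigma_0)^{-1/2}\sin\bigl(t\varepsilon^{-1}(E(k)-\sigma_0)^{1/2}\bigr)$ equals $\varepsilon b_0^{-1/2}|k|^{-1}$ times a bounded factor (by~\eqref{sqrt_E_s_series_k=0}, \eqref{delta_fix3}) and is itself $\le C\min(|t|,\varepsilon|k|^{-1})$, so the mechanism of Lemma~\ref{lemma1}\,\eqref{lemma1_est_2} applies and costs $C\varepsilon^{1+\min(0,r)}\le C\varepsilon^{(2r+2)/3}$. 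For the $E(k)$-replacement in the $\cos$-part, \eqref{sqrt_E_s_series_k=0} and the identity $\cos\alpha-\cos\beta=-2\sin\tfrac{\alpha+\beta}{2}\sin\tfrac{\alpha-\beta}{2}$ give
\[
\cos\bigl(t\varepsilon^{-1}(E-\sigma_0)^{1/2}\bigr)-\cos\bigl(t\varepsilon^{-1}b_0^{1/2}|k|\bigr) = -2\sin\!\Bigl(t\varepsilon^{-1}\bigl(b_0^{1/2}|k|+\tfrac12|k|^3\tilde\gamma_0(k)\bigr)\Bigr)\sin\!\Bigl(\tfrac12 t\varepsilon^{-1}|k|^3\tilde\gamma_0(k)\Bigr);
\]
bounding the first factor by $1$, the remaining multiplier norm is exactly the quantity estimated in Lemma~\ref{lemma2}\,\eqref{lemma2_est_2}, which is $\le C(1+|t|^{q/3})\varepsilon^{2q/3}$ (using also $\varepsilon\le1$ to absorb the denominator). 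For the $A^{-1/2}\sin$-part I write the symbol difference as
\[
\varepsilon(E-\sigma_0)^{-1/2}\bigl(\sin(t\varepsilon^{-1}(E-\sigma_0)^{1/2})-\sin(t\varepsilon^{-1}b_0^{1/2}|k|)\bigr)+\varepsilon\sin(t\varepsilon^{-1}b_0^{1/2}|k|)\bigl((E-\sigma_0)^{-1/2}-b_0^{-1/2}|k|^{-1}\bigr);
\]
for the first summand, $\sin\alpha-\sin\beta=2\cos\tfrac{\alpha+\beta}{2}\sin\tfrac{\alpha-\beta}{2}$ and $(E-\sigma_0)^{-1/2}\asymp|k|^{-1}$ reduce it to $\varepsilon$ times the multiplier of Lemma~\ref{lemma2}\,\eqref{lemma2_est_3}, hence $\le C(1+|t|^{(r+1)/3})\varepsilon^{(2r+2)/3}$; for the second summand, \eqref{sqrt_E_s_series_k=0} gives $(E-\sigma_0)^{-1/2}-b_0^{-1/2}|k|^{-1}=O(|k|)$, so with $|\sin|\le\min(1,c|t|\varepsilon^{-1}|k|)$ this is a lower-order term, $\le C\varepsilon^{1+\min(0,r)}$ in the regime $0<\varepsilon|t|^{-1}\le\tilde{\mathfrak{e}}$. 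Collecting all contributions yields~\eqref{cond1_hyperb_thrm_est}.

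For $-1\le r<0$ the factor $\varepsilon^{r}(-d_x^2+\varepsilon^2 I)^{-r/2}=\varepsilon^{-|r|}(-d_x^2+\varepsilon^2 I)^{|r|/2}$ is \emph{unbounded}, so the reductions above must be carried out keeping the smoothing $(E_l(k)-\sigma_0)^{-1/2}\le C|k|^{-1}$ in force at every step; it is exactly this order-one smoothing of $A_{\sigma_0,\varepsilon}^{-1/2}\sin(tA_{\sigma_0,\varepsilon}^{1/2})\Upsilon^{(+)}_\varepsilon$ that keeps the relevant operator bounded after composition with $(-d_x^2+I)^{|r|/2}$ when $|r|\le1$, and the price for $r<0$ is the additional term $|t|^{1/3}\varepsilon^{2/3}$ in~\eqref{cond1_hyperb_thrm_est_r<0}, coming from the part of the estimate that cannot be absorbed into $\varepsilon^{(2r+2)/3}$. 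I expect the main difficulty to be precisely the $A^{-1/2}\sin$-part: (i) arranging the factorisation so that the $|k|^{-1}$ singularity of the symbol at $k=0$ is matched against the bounded oscillating factor and Lemma~\ref{lemma1}\,\eqref{lemma1_est_2} genuinely applies; (ii) splitting the symbol difference into the ``amplitude'' and ``phase'' pieces above and verifying that the amplitude piece is lower order while the phase piece is governed by Lemma~\ref{lemma2}\,\eqref{lemma2_est_3}; and (iii) the bookkeeping for $r<0$ showing that every term stays $\le C\bigl((1+|t|^{(r+1)/3})\varepsilon^{(2r+2)/3}+|t|^{1/3}\varepsilon^{2/3}\bigr)$. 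The $\cos$-part, by contrast, is a straightforward transcription of the argument for Theorem~\ref{cond1_Schrod_thrm} with $e^{-it\varepsilon^{-2}(\cdot)}$ replaced by $\cos(t\varepsilon^{-1}(\cdot)^{1/2})$.
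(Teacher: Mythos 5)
Your proposal is correct and follows essentially the same route as the paper's proof: reduction via \eqref{v_represent} to the $\cos$- and $A^{-1/2}\sin$-parts, scaling plus \eqref{Sobolev_isomorhism}, the splitting with $F_\mathfrak{K}$, Lemma~\ref{lemma1} for the replacement $\Psi^*\mapsto[\varphi_0]\Phi^*$, the product-to-sum trigonometric identities combined with \eqref{sqrt_E_s_series_k=0} and Lemma~\ref{lemma2}\,\eqref{lemma2_est_2}, \eqref{lemma2_est_3} for the phase replacement, and the separate $O(|k|)$ amplitude correction $(E-\sigma_0)^{-1/2}-(b_0k^2)^{-1/2}$. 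The only deviations are cosmetic (your uniform bound $(E_l(k)-\sigma_0)^{-1/2}\le C|k|^{-1}$ in place of the paper's $N$-splitting on $I-F_\mathfrak{K}$, and applying the eigenfunction replacement to the exact rather than the homogenized symbol), and both lead to the same estimates with the same parameter dependence.
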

\begin{proof}
	By~\eqref{v_represent}, we have to prove the estimates
	\begin{align}
		\label{cos_norm}
		&\begin{multlined}[c][0.8\textwidth]
		\Bigl\| \cos \bigl(t A_{\sigma_{0}, \varepsilon}^{1/2}\bigr) \Upsilon^{(+)}_\varepsilon-  [\varphi_0^{\varepsilon}] \cos (t (A^{\mathrm{hom}}_0)^{1/2}) \Bigr\|_{H^{q} (\mathbb{R}) \to L_2(\mathbb{R})}  \le C (1+|t|^{q/3}) \varepsilon^{2q/3}, 
		\\ 0 \le q \le 3/2,		
		\end{multlined}
		\\
		\label{sin_norm_r>0}
		&\begin{multlined}[c][0.8\textwidth]
			\Bigl\| A_{\sigma_{0}, \varepsilon}^{-1/2} \sin \bigl( t A_{\sigma_{0}, \varepsilon}^{1/2}\bigr) \Upsilon^{(+)}_\varepsilon  -  [\varphi_0^{\varepsilon}] (A^{\mathrm{hom}}_0)^{-1/2} \sin (t (A^{\mathrm{hom}}_0)^{1/2}) \Bigr\|_{H^{r} (\mathbb{R}) \to L_2(\mathbb{R})} \\
			\le C (1+|t|^{(r+1)/3}) \varepsilon^{(2r+2)/3}, \qquad 0 \le r \le 1/2,
		\end{multlined}
		\\
		\label{sin_norm_r<0}
		&\begin{multlined}[c][0.8\textwidth]
			\Bigl\| A_{\sigma_{0}, \varepsilon}^{-1/2} \sin \bigl( t A_{\sigma_{0}, \varepsilon}^{1/2}\bigr) \Upsilon^{(+)}_\varepsilon  -  [\varphi_0^{\varepsilon}] (A^{\mathrm{hom}}_0)^{-1/2} \sin (t (A^{\mathrm{hom}}_0)^{1/2}) \Bigr\|_{H^{r} (\mathbb{R}) \to L_2(\mathbb{R})} \\
			\le C ((1+|t|^{(r+1)/3}) \varepsilon^{(2r+2)/3} + |t|^{1/3} \varepsilon^{2/3}), \qquad -1 \le r < 0,
		\end{multlined}
	\end{align}
	where $t \ne 0$, $0 < \varepsilon |t|^{-1} \le \tilde{\mathfrak{e}}$. Using~\eqref{Sobolev_isomorhism} and the unitarity of the scaling transformation, we have
	\begin{equation}
		\label{cos_norm_reduction}
		\begin{multlined}[c][0.8\textwidth]
			\Bigl\| \cos \bigl(t A_{\sigma_{0}, \varepsilon}^{1/2} \bigr) \Upsilon^{(+)}_\varepsilon -  [\varphi_0^{\varepsilon}] \cos (t (A^{\mathrm{hom}}_0)^{1/2}) \Bigr\|_{H^{q} (\mathbb{R}) \to L_2(\mathbb{R})}
			\\
			= \Bigl\| \Bigl( \cos\bigl(t \varepsilon^{-1} A_{\sigma_{0}}^{1/2}\bigr) \Upsilon^{(+)} -  [\varphi_0] \cos(t \varepsilon^{-1} (A^{\mathrm{hom}}_0)^{1/2}) \Bigr) \varepsilon^{q} (- d_x^2 + \varepsilon^{2}I)^{-q/2} \Bigr\|_{L_2 (\mathbb{R}) \to L_2(\mathbb{R})}
		\end{multlined}
	\end{equation}
	and
	\begin{equation}
		\label{sin_norm_reduction}
		\begin{aligned}
				&\Bigl\| A_{\sigma_{0}, \varepsilon}^{-1/2} \sin \bigl( t A_{\sigma_{0}, \varepsilon}^{1/2}\bigr) \Upsilon^{(+)}_\varepsilon -  [\varphi_0^{\varepsilon}] (A^{\mathrm{hom}}_0)^{-1/2} \sin (t (A^{\mathrm{hom}}_0)^{1/2}) \Bigr\|_{H^{r}(\mathbb{R}) \to L_2(\mathbb{R})}
			\\
			&\begin{multlined}[c][0.9\textwidth]
				= \Bigl\| \varepsilon \Bigl( A_{\sigma_{0}}^{-1/2} \sin \bigl(t \varepsilon^{-1} A_{\sigma_{0}}^{1/2}\bigr) \Upsilon^{(+)}  \\ -  [\varphi_0] (A^{\mathrm{hom}}_0)^{-1/2} \sin (t \varepsilon^{-1} (A^{\mathrm{hom}}_0)^{1/2}) \Bigr) \varepsilon^{r} (- d_x^2 + \varepsilon^{2}I)^{-r/2} \Bigr\|_{L_2(\mathbb{R}) \to L_2(\mathbb{R})},
			\end{multlined}
		\end{aligned}
	\end{equation}
	where $A_{\sigma_0} \coloneqq (A  - \sigma_0) \mathcal{E}_{A}[\sigma_0, \infty)$.
	
	Let us estimate~\eqref{cos_norm_reduction}. Similarly to the proof of Theorem~\ref{cond1_Schrod_thrm}, one obtains
	\begin{gather*}
	\begin{multlined}[c][0.8\textwidth]
		\Bigl\| \Bigl( \cos\bigl(t \varepsilon^{-1} A_{\sigma_{0}}^{1/2}\bigr) \Upsilon^{(+)} - [\varphi_0] \cos(t \varepsilon^{-1} (A^{\mathrm{hom}}_0)^{1/2}) \Bigr) \varepsilon^{q} (- d_x^2 + \varepsilon^{2}I)^{-q/2} (I - F_\mathfrak{K}) \Bigr\|_{L_2 (\mathbb{R}) \to L_2(\mathbb{R})} \\ \le (1 + \|\varphi_0\|_{L_\infty} ) \kappa^{-q} \varepsilon^q,
	\end{multlined}
	\\
		\cos\bigl(t \varepsilon^{-1} A_{\sigma_{0}}^{1/2}\bigr) \Upsilon^{(+)} \ \varepsilon^{q} (- d_x^2 + \varepsilon^{2}I)^{-q/2} F_\mathfrak{K} = \Psi^* [\cos\bigl(t \varepsilon^{-1} (E(k) - \sigma_0)^{1/2}\bigr) \varepsilon^{q} (k^2 + \varepsilon^{2})^{-q/2} \chi_\mathfrak{K}(k)] \Phi,
		\\
		[\varphi_0] \cos\bigl(t \varepsilon^{-1} (A^{\mathrm{hom}}_0)^{1/2}\bigr) \varepsilon^{q} (- d_x^2 + \varepsilon^{2}I)^{-q/2} F_\mathfrak{K} = [\varphi_0] \Phi^* [\cos\bigl(t \varepsilon^{-1} (b_0 k^2)^{1/2}\bigr) \varepsilon^{q} (k^2 + \varepsilon^{2})^{-q/2} \chi_\mathfrak{K}(k)] \Phi,
	\\
		\| (\Psi^* - [\varphi_0] \Phi^*) [\cos\bigl(t \varepsilon^{-1} (b_0 k^2)^{1/2}\bigr) \varepsilon^{q} (k^2 + \varepsilon^{2})^{-q/2} \chi_\mathfrak{K}(k)] \Phi \| \le C \varepsilon^{\min(1,q)}.
	\end{gather*}
	It remains to estimate the norm
	\begin{equation*}
		\Bigl\| \Psi^* [(\cos(t \varepsilon^{-1} (E(k) - \sigma_0)^{1/2}) - \cos(t \varepsilon^{-1} (b_0 k^2)^{1/2})) \varepsilon^{q} (k^2 + \varepsilon^{2})^{-q/2} \chi_\mathfrak{K}(k)] \Phi \Bigr\|_{L_2 (\mathbb{R}) \to L_2(\mathbb{R})}.
	\end{equation*}
	Transform the difference of the cosines using the identity
	\begin{multline*}
		\cos\bigl(t \varepsilon^{-1} (E(k) - \sigma_0)^{1/2}\bigr) - \cos\bigl(t \varepsilon^{-1} (b_0 k^2)^{1/2})\bigr)  \\
		= -2 \sin \left(\frac{1}{2} t \varepsilon^{-1}  \bigl((E(k) - \sigma_0)^{1/2} + (b_0 k^2)^{1/2}\bigr) \right) \sin \left(\frac{1}{2} t \varepsilon^{-1}  \bigl((E(k) - \sigma_0)^{1/2} - (b_0 k^2)^{1/2}\bigr) \right).
	\end{multline*}
	Obviously, $\left| \sin \left(\tfrac{1}{2} t \varepsilon^{-1} ((E(k) - \sigma_0)^{1/2} + (b_0 k^2)^{1/2}) \right)\right| \le 1$. Applying upper estimate~\eqref{lemma2_est_2} and taking into account~\eqref{sqrt_E_s_series_k=0}, we obtain that
	\begin{multline*}
		\Bigl\| \Psi^* [(\cos(t \varepsilon^{-1} (E(k) - \sigma_0)^{1/2}) - \cos(t \varepsilon^{-1} (b_0 k^2)^{1/2})) \varepsilon^{q} (k^2 + \varepsilon^{2})^{-q/2} \chi_\mathfrak{K}(k)] \Phi \Bigr\|_{L_2 (\mathbb{R}) \to L_2(\mathbb{R})} 
		\\
		\le	\frac{C\varepsilon^{2q/3} |t|^{q/3}}{(|\tilde{\gamma}_{k_0} (k_0)|^{-2/3} + \varepsilon^{4/3} |t|^{2/3})^{q/2}}, \qquad 0 < \varepsilon |t|^{-1} \le \tilde{\mathfrak{e}},
	\end{multline*}
	which completes the proof of~\eqref{cos_norm}.

	Now we turn to the proof of estimates~\eqref{sin_norm_r>0}, \eqref{sin_norm_r<0}.
	Firstly, we estimate the operator under the norm sign in~\eqref{sin_norm_reduction} multiplied by the projection $(I-F_\mathfrak{K})$ on the right:
	\begin{equation}
		\label{sin_(I-F)_est}
		\begin{aligned}
			&\begin{multlined}[c][0.8\textwidth]
				\varepsilon \Bigl( A_{\sigma_{0}}^{-1/2} \sin \bigl(t \varepsilon^{-1} A_{\sigma_{0}}^{1/2}\bigr) \Upsilon^{(+)}  \\ -  [\varphi_0] (A^{\mathrm{hom}}_0)^{-1/2} \sin (t \varepsilon^{-1} (A^{\mathrm{hom}}_0)^{1/2}) \Bigr) \varepsilon^{r} (- d_x^2 + \varepsilon^{2}I)^{-r/2}(I-F_\mathfrak{K})
			\end{multlined}
			\\
			&\begin{multlined}[c][0.8\textwidth]
				= \varepsilon \Bigl( \sum_{j=s}^\infty  \Psi_j^* R_{j-s+1} \bigl[ \bigl(E_j(k) - \sigma_0\bigr)^{-1/2} \sin \bigl(t \varepsilon^{-1} (E_j(k) - \sigma_0)^{1/2}\bigr) \chi_{\widetilde{\Omega}_{j-s+1}} (k) \bigr]   \\ -  [\varphi_0] \Phi^* \bigl[ (b_0 k^2)^{-1/2} \sin (t \varepsilon^{-1} (b_0 k^2)^{1/2}) \bigl] \Bigr) \bigl[ \varepsilon^{r} (k^2 + \varepsilon^{2})^{-r/2} (1-\chi_\mathfrak{K}(k)) \bigr] \Phi.
			\end{multlined}
		\end{aligned}
	\end{equation}
	We use the elementary inequality $|\sin x| \le 1$, $x \in \mathbb{R}$, and the estimates
	\begin{align*}
		\bigl(E_j(k) - \sigma_0\bigr)^{-1/2} \chi_{\widetilde{\Omega}_{j-s+1}} (k) (1-\chi_\mathfrak{K}(k)) &\le \bigl(E(\kappa) - \sigma_0\bigr)^{-1/2}, & &j\ge s,
		\\
		(b_0 k^2)^{-1/2} (1-\chi_\mathfrak{K}(k)) &\le b_0^{-1/2} \kappa^{-1}.
	\end{align*}
	Next, if $r \ge 0$, then $\varepsilon^{r} (k^2 + \varepsilon^2)^{-r/2} \le 1$, and the $(L_2(\mathbb{R})\to L_2(\mathbb{R}))$-norm of operator~\eqref{sin_(I-F)_est} is estimated by $C \varepsilon$. Consider the case where $-1 \le r < 0$. Let $N \ge 1$. Then
	\begin{align*}
		\varepsilon^{r} (k^2 + \varepsilon^2)^{-r/2} \chi_{[-N,N]}(k) &\le \varepsilon^{r} (N^2 + 1)^{-r/2}, & -1 &\le r < 0, 
		\\
		(b_0 k^2)^{-1/2} \varepsilon^{r} (k^2 + \varepsilon^2)^{-r/2} (1- \chi_{[-N,N]}(k))  &\le b_0^{-1/2} N^{-1} (N^2+1)^{-r/2} \varepsilon^{r}, & -1 &\le r < 0.
	\end{align*}
	Here we have taken into account that the function $(k^2 + \varepsilon^2)^{-r/2}$ is even and monotonically increasing in $k \ge 0$, and the function $|k|^{-1} (k^2 + \varepsilon^2)^{-r/2}$ is even and monotonically decreasing in $k \ge 1$.
	Next, from lower estimate~\eqref{E_l_estimates} and periodicity of the functions $E_j(k)$ we deduce
	\begin{equation*}
		E_j(k) \chi_{\widetilde{\Omega}_{j-s+1}} (k) \ge \alpha_0 \beta_0^2 \beta_1^{-2} (|k| + \pi (s-1))^2 \chi_{\widetilde{\Omega}_{j-s+1}} (k) \ge \alpha_0 \beta_0^2 \beta_1^{-2} k^2 \chi_{\widetilde{\Omega}_{j-s+1}} (k), \qquad j \ge s,
	\end{equation*}
	and therefore
	\begin{equation}
		\label{sin_(1-chi)_est}
		\begin{multlined}[c][0.9\textwidth]
			\bigl(E_j(k) - \sigma_0\bigr)^{-1/2} (1- \chi_{[-N,N]}(k)) \chi_{\widetilde{\Omega}_{j-s+1}} (k) \varepsilon^{r} (k^2 + \varepsilon^2)^{-r/2} \\ \le (\alpha_0 \beta_0^2 \beta_1^{-2} k^2 - \sigma_0)^{-1/2} (1- \chi_{[-N,N]}(k)) \varepsilon^{r} (k^2 + \varepsilon^2)^{-r/2} \le C \varepsilon^{r},\\ C = (\alpha_0 \beta_0^2 \beta_1^{-2} N^2 - \sigma_0)^{-1/2} (N^2 + 1)^{-r/2}, \qquad j \ge s,  \quad -1 \le r < 0. 
		\end{multlined}
	\end{equation}
	Here we have taken into account that the function in the middle part of~\eqref{sin_(1-chi)_est} monotonically decreases in $k \ge N$. The number $N$ is chosen such that $\alpha_0 \beta_0^2 \beta_1^{-2} N^2 - \sigma_0 > 0$. As a result, for $-1 \le r < 0$ operator~\eqref{sin_(I-F)_est} is estimated in the $(L_2(\mathbb{R}) \to L_2(\mathbb{R}))$-norm by $C \varepsilon^{r+1}$.
	
	Now, consider
	\begin{equation}
		\label{sin_F}
		\varepsilon \Bigl( A_{\sigma_{0}}^{-1/2} \sin \bigl(t \varepsilon^{-1} A_{\sigma_{0}}^{1/2}\bigr) \Upsilon^{(+)}  -  [\varphi_0] (A^{\mathrm{hom}}_0)^{-1/2} \sin (t \varepsilon^{-1} (A^{\mathrm{hom}}_0)^{1/2}) \Bigr) \varepsilon^r (- d_x^2 + \varepsilon^{2}I)^{-r/2}  F_\mathfrak{K}.
	\end{equation}
	Similarly to~\eqref{thrm_f1}, \eqref{thrm_f2}, we see that operator~\eqref{sin_F} equals
	\begin{equation*}
		\begin{multlined}[c][0.9\textwidth]
			\varepsilon \Bigl( \Psi^* \bigl[ \bigl(E(k) - \sigma_0\bigr)^{-1/2} \sin \bigl(t \varepsilon^{-1} (E(k) - \sigma_0\bigr)^{1/2}\bigr) \bigr]  -  [\varphi_0] \Phi^* \bigl[ (b_0 k^2)^{-1/2} \sin (t \varepsilon^{-1} (b_0 k^2)^{1/2}) \bigl] \Bigr)  \\ \times \bigl[  \varepsilon^r (k^2 + \varepsilon^{2})^{-r/2}  \chi_{\mathfrak{K}}(k)\bigr] \Phi.
		\end{multlined}
	\end{equation*}
	By~\eqref{sqrt_E_s_series_k=0}, \eqref{delta_fix2}, and \eqref{delta_fix3},
	\begin{multline*}
		\bigl| \bigl(E(k) - \sigma_0\bigr)^{-1/2} - (b_0 k^2)^{-1/2} \bigr| \chi_{\mathfrak{K}}(k) = \frac{|k|^3 |\tilde{\gamma}_0(k)|}{b_0^{1/2}|k| \bigl|b_0^{1/2}|k| + |k|^3 \tilde{\gamma}_0(k)\bigr|} \chi_{\mathfrak{K}}(k) 
		\\ \le \frac{3|k| |\tilde{\gamma}_0(0)|}{2 b_0^{1/2} \bigl|b_0^{1/2} + k^2 \tilde{\gamma}_0(k)\bigr|} \chi_{\mathfrak{K}}(k) \le 3 b_0^{-1} |k| |\tilde{\gamma}_0(0)| \chi_{\mathfrak{K}}(k),
	\end{multline*}
	whence
	\begin{equation*}
		\bigl| \bigl(E(k) - \sigma_0\bigr)^{-1/2} - (b_0 k^2)^{-1/2} \bigr| \varepsilon^{r+1} (k^2 + \varepsilon^{2})^{-r/2}  \chi_{\mathfrak{K}}(k) \le 3 b_0^{-1} |\tilde{\gamma}_0(0)| \varepsilon^{r+1} \kappa (\kappa^2 + \varepsilon^{2})^{-r/2}.
	\end{equation*}
	Here we have taken into account that the function $k (k^2+\varepsilon^2)^{-r/2}$ monotonically increases in  $k \ge 0$.
	
	Next, from~\eqref{lemma1_est_2} it directly follows that
	\begin{equation*}
		\varepsilon \| (\Psi^* - [\varphi_0] \Phi^*) [(b_0 k^2)^{-1/2} \sin (t \varepsilon^{-1} (b_0 k^2)^{1/2}) \varepsilon^{r} (k^2 + \varepsilon^{2})^{-r/2} \chi_\mathfrak{K}(k)] \Phi \| \le C \varepsilon^{\min(1,r+1)}.
	\end{equation*}
	It remains to estimate
		\begin{multline*}
			\varepsilon\Bigl\| \Psi^* \Bigl[ (b_0 k^2)^{-1/2} \Bigl(\sin\bigl(t \varepsilon^{-1} (E(k) - \sigma_0)^{1/2}\bigr) \\ -  \sin\bigl(t \varepsilon^{-1} (b_0 k^2)^{1/2}\bigr) \Bigr) \varepsilon^{r} (k^2 + \varepsilon^{2})^{-r/2} \chi_\mathfrak{K}(k)\Bigr] \Phi \Bigr\|_{L_2 (\mathbb{R}) \to L_2(\mathbb{R})}.
		\end{multline*}
	We have
	\begin{multline*}
		 \sin(t \varepsilon^{-1} (E(k) - \sigma_0)^{1/2}) - \sin(t \varepsilon^{-1} (b_0 k^2)^{1/2})\\=
		2 \cos \left(\tfrac{1}{2} t \varepsilon^{-1} ((E(k) - \sigma_0)^{1/2} + (b_0 k^2)^{1/2}) \right) \sin \left(\tfrac{1}{2} t \varepsilon^{-1} ((E(k) - \sigma_0)^{1/2} - (b_0 k^2)^{1/2})\right).
	\end{multline*}
	Obviously, $\left| \cos \left(\tfrac{1}{2} t \varepsilon^{-1} \bigl((E(k) - \sigma_0)^{1/2} + (b_0 k^2)^{1/2}\bigr) \right)\right| \le 1$. Application of upper estimate~\eqref{lemma2_est_3} together with~\eqref{sqrt_E_s_series_k=0} completes the proof of inequalities~\eqref{sin_norm_r>0}, \eqref{sin_norm_r<0}.	
\end{proof}

\subsection{The case where Condition~\ref{cond3} is fulfilled.}
Now it is convenient to realize points of the circle in~\eqref{Psi_l} in the following way:
\begin{equation*}
	\Psi_l \colon L_2(\mathbb{R}) \to L_2(\widetilde{\Omega}'_{s-l+1}), \qquad l = 1, \ldots, s,
\end{equation*}
where $\widetilde{\Omega}'_{j} = \widetilde{\Omega}_{j} + \pi$.

Consider the Cauchy problem for the nonstationary Schr\"{o}dinger equation
\begin{equation}
	\label{cond3_Cauchy_problem_Schrod}
	\left\{
	\begin{aligned}
		&i \frac{\partial}{\partial t} w_\varepsilon (x,t) = (A_\varepsilon w_\varepsilon)(x,t) ,\\
		&w_\varepsilon (x,0) = (\widetilde{\Upsilon}^{(-)}_\varepsilon f)(x),
	\end{aligned}
	\right.
\end{equation}
where
\begin{align*}
	&(\widetilde{\Upsilon}^{(-)}_\varepsilon f)(x) \coloneqq (2 \pi)^{-1/2} \int_\mathbb{R} (\Phi f) (k-\varepsilon^{-1} \pi) \sum_{j=1}^{s} e^{ikx} \varphi_j(x/\varepsilon, \varepsilon k) \chi_{\widetilde{\Omega}'_{s-j+1}} (\varepsilon k) \, dk,
	\\
	&\widetilde{\Upsilon}^{(-)}_\varepsilon f \in \mathcal{E}_{A_\varepsilon}[0, \varepsilon^{-2} \sigma_{\pi}] L_2(\mathbb{R}),
\end{align*}
and the corresponding "homogenized" problem
\begin{equation}
	\label{cond3_Cauchy_problem_Schrod_homog}
	\left\{
	\begin{aligned}
		&i \frac{\partial}{\partial t} w_0 (x,t) = -(A^{\mathrm{hom}}_{\pi} w_0) (x,t) ,\\
		&w_0 (x,0) = f (x).
	\end{aligned}
	\right.
\end{equation}
Here $A^{\mathrm{hom}}_\pi =  - b_\pi \frac{d^2}{d x^2}$, $\Dom A^{\mathrm{hom}}_\pi = H^2(\mathbb{R})$, is the operator acting in $L_2(\mathbb{R})$, which is called \emph{the effective operator at the right edge $\sigma_{\pi} = E(\pi)$ of the band $R(E)$}. 

The solutions of problems~\eqref{cond3_Cauchy_problem_Schrod} and~\eqref{cond3_Cauchy_problem_Schrod_homog} can be represented as follows:
\begin{equation}
	\label{w_represent}
	w_\varepsilon = e^{-i t A_\varepsilon} \widetilde{\Upsilon}^{(-)}_\varepsilon f, \qquad w_0 = e^{-i t (-A^{\mathrm{hom}}_{\pi})} f,
\end{equation}
where $\widetilde{\Upsilon}^{(-)}_\varepsilon \coloneqq \sum_{j=1}^s T_\varepsilon^* \Psi_j^* R'_{s-j+1}  \Phi T_\varepsilon [e^{i \pi x/\varepsilon}] $, and $R'_l$ is the operator of restriction to $\widetilde{\Omega}'_l$.

\begin{thrm}
	\label{cond3_Schrod_thrm}
	Suppose that Condition~\textup{\ref{cond3}} and~\eqref{gamma_ne_0} are fulfilled. Let $w_\varepsilon$ be the solution of problem~\eqref{cond3_Cauchy_problem_Schrod}, and let $w_0$ be the solution of problem~\eqref{cond3_Cauchy_problem_Schrod_homog}. Let $t \ne 0$, $f \in H^q(\mathbb{R})$, $0 \le q \le 2$. We have
	\begin{equation}
		\label{cond3_Schrod_thrm_est}
		\begin{multlined}[c][0.8\textwidth]
			\| w_\varepsilon(\cdot, t) - e^{-i t \varepsilon^{-2} \sigma_{\pi}} \varphi_{\pi}^{\varepsilon} e^{i \pi (\cdot/\varepsilon) } w_0(\cdot, t)\|_{L_2(\mathbb{R})} \le C (1+|t|^{q/4}) \varepsilon^{q/2} \|f\|_{H^q(\mathbb{R})}, 
			\\ 
			0 < \varepsilon |t|^{-1/2} \le \mathfrak{e},
		\end{multlined}
	\end{equation}
	with the constant $C = C(q, \kappa, \| \varphi_{\pi} \|_{L_\infty}, |\gamma_{\pi} (\pi)|, \|\theta_\pi\|_{M_\kappa(\pi)})$.
\end{thrm}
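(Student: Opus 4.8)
The plan is to transcribe the proof of Theorem~\ref{cond1_Schrod_thrm} almost word for word, replacing the band edge $k_0=0$ by $k_0=\pi$, the expansions~\eqref{E_powerseries_1},~\eqref{phi_powerseries_1} by~\eqref{E_powerseries_2},~\eqref{phi_powerseries_2} (with the sign ``$-$'', since Condition~\ref{cond3} holds), and carrying the rapidly oscillating factor $e^{i\pi x/\varepsilon}$ along.

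\emph{Step 1 (operator reformulation and scaling).} By~\eqref{w_represent}, estimate~\eqref{cond3_Schrod_thrm_est} is equivalent to the operator bound
\begin{equation*}
\bigl\| e^{-itA_\varepsilon}\widetilde{\Upsilon}^{(-)}_\varepsilon - e^{-it\varepsilon^{-2}\sigma_{\pi}}\bigl[\varphi_\pi^\varepsilon e^{i\pi x/\varepsilon}\bigr]\,e^{-it(-A^{\mathrm{hom}}_\pi)} \bigr\|_{H^q(\mathbb{R})\to L_2(\mathbb{R})} \le C(1+|t|^{q/4})\varepsilon^{q/2}.
\end{equation*}
I would compose with $(-d_x^2+I)^{-q/2}$ on the right (using~\eqref{Sobolev_isomorhism}) and conjugate by the unitary scaling $T_\varepsilon$ as in~\eqref{thrm1_proof_scale_transf}; the only new ingredients are the identity $T_\varepsilon[e^{i\pi x/\varepsilon}]=[e^{i\pi x}]T_\varepsilon$ and the fact that $[e^{i\pi x}]$ is unitary on $L_2(\mathbb{R})$ with $[e^{i\pi x}]\Phi^*[\mu(k)]\Phi=\Phi^*[\mu(k-\pi)]\Phi[e^{i\pi x}]$. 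Cancelling $[e^{i\pi x}]$ on the far right and using $e^{-it\varepsilon^{-2}A}\Psi_j^*=\Psi_j^*[e^{-it\varepsilon^{-2}E_j}]$ together with the fact that the symbol of $e^{-it\varepsilon^{-2}(-A^{\mathrm{hom}}_\pi)}$ is $e^{it\varepsilon^{-2}b_\pi k^2}$, the problem reduces to estimating the $(L_2\to L_2)$-norm of
\begin{equation*}
\sum_{j=1}^{s}\Psi_j^*R'_{s-j+1}\bigl[e^{-it\varepsilon^{-2}E_j(k)}\,\varepsilon^{q}\bigl((k-\pi)^2+\varepsilon^2\bigr)^{-q/2}\bigr]\Phi - e^{-it\varepsilon^{-2}\sigma_\pi}[\varphi_\pi]\Phi^*\bigl[e^{it\varepsilon^{-2}b_\pi(k-\pi)^2}\varepsilon^{q}\bigl((k-\pi)^2+\varepsilon^2\bigr)^{-q/2}\bigr]\Phi .
\end{equation*}

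\emph{Step 2 (localization at $k=\pi$ and band isolation).} Insert $F^{(\pi)}_{\mathfrak{K}}:=\Phi^*[\chi_\mathfrak{K}(k-\pi)]\Phi$ and its complement. On $I-F^{(\pi)}_\mathfrak{K}$ the factor $\varepsilon^q((k-\pi)^2+\varepsilon^2)^{-q/2}(1-\chi_\mathfrak{K}(k-\pi))$ is bounded by $\kappa^{-q}\varepsilon^q$, which gives an $O(\varepsilon^q)=O(\varepsilon^{q/2})$ contribution exactly as in~\eqref{thrm_(I-F)_est}. On $F^{(\pi)}_\mathfrak{K}$ the support lies in $(\pi-\kappa,\pi+\kappa]\subset(0,2\pi)$; since the shifted Brillouin zones $\widetilde{\Omega}'_l=\widetilde{\Omega}_l+\pi$ with $l\ge 2$ do not meet this interval when $\kappa<\pi$, only the term $j=s$ survives and $R'_1$ acts trivially there. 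Writing $E=E_s$, $\varphi=\varphi_s$, $\Psi=\Psi_s$ and $m(k):=\varepsilon^q((k-\pi)^2+\varepsilon^2)^{-q/2}\chi_\mathfrak{K}(k-\pi)$, the $F^{(\pi)}_\mathfrak{K}$-part equals
\begin{equation*}
\Psi^*\bigl[(e^{-it\varepsilon^{-2}E(k)}-e^{-it\varepsilon^{-2}\sigma_\pi}e^{it\varepsilon^{-2}b_\pi(k-\pi)^2})\,m(k)\bigr]\Phi + (\Psi^*-[\varphi_\pi]\Phi^*)\bigl[e^{-it\varepsilon^{-2}\sigma_\pi}e^{it\varepsilon^{-2}b_\pi(k-\pi)^2}m(k)\bigr]\Phi ,
\end{equation*}
the analog of~\eqref{thrm_f1}--\eqref{thrm_f2} with $k_0=\pi$.

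\emph{Step 3 (application of Lemmas~\ref{lemma1} and~\ref{lemma2}; main obstacle).} The second summand is controlled by~\eqref{lemma1_est_1} with $k_0=\pi$; the extra unimodular phase $e^{-it\varepsilon^{-2}\sigma_\pi}e^{it\varepsilon^{-2}b_\pi(k-\pi)^2}$ is harmless, exactly as in~\eqref{thrm1_proof_lemma1_appl}, so this term is $O(\varepsilon^{\min(1,q)})=O(\varepsilon^{q/2})$ for $0\le q\le 2$. For the first summand I use~\eqref{E_powerseries_2} with the sign ``$-$'' to write, as in~\eqref{exp-exp_sin},
\begin{equation*}
e^{-it\varepsilon^{-2}E(k)}-e^{-it\varepsilon^{-2}\sigma_\pi}e^{it\varepsilon^{-2}b_\pi(k-\pi)^2}=-2i\,e^{-it\varepsilon^{-2}(\sigma_\pi-b_\pi(k-\pi)^2)}e^{-\frac12 it\varepsilon^{-2}(k-\pi)^4\gamma_\pi(k)}\sin\!\Bigl(\tfrac12 t\varepsilon^{-2}(k-\pi)^4\gamma_\pi(k)\Bigr),
\end{equation*}
and then apply the upper estimate~\eqref{lemma2_est_1} with $k_0=\pi$ (bounding the two unimodular prefactors by $1$ and using $\|\Psi^*\|\le1$); this yields $C\varepsilon^{q/2}|t|^{q/4}(|\gamma_\pi(\pi)|^{-1/2}+\varepsilon|t|^{1/2})^{-q/2}\le C(1+|t|^{q/4})\varepsilon^{q/2}$ on the range $0<\varepsilon|t|^{-1/2}\le\mathfrak{e}$. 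Summing the three contributions gives~\eqref{cond3_Schrod_thrm_est}. There is no genuinely new difficulty compared with Theorem~\ref{cond1_Schrod_thrm}: the only step requiring care is the bookkeeping of the oscillating factor $e^{i\pi x/\varepsilon}$ --- checking that it commutes correctly with $T_\varepsilon$ and with the smoothing factor $\varepsilon^q(-d_x^2+\varepsilon^2 I)^{-q/2}$, that after the Fourier transform it translates all symbols and cut-offs by $\pi$, and that the translated Brillouin zones $\widetilde{\Omega}'_l$ are arranged so that $\chi_\mathfrak{K}(\,\cdot-\pi)$ still isolates the single band $j=s$; once this is done, Lemmas~\ref{lemma1} and~\ref{lemma2} apply verbatim with $k_0=\pi$.
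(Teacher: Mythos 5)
Your proposal is correct and follows essentially the same route as the paper's own proof: the paper likewise reduces to the operator estimate, conjugates by the scaling and by $[e^{i\pi x}]$ (via the identities $\Phi^*[\mu(k-\pi)]\Phi[e^{i\pi x}]=[e^{i\pi x}]\Phi^*[\mu(k)]\Phi$), isolates the band $j=s$ on the support of $\chi_\mathfrak{K}(k-\pi)$, and then applies Lemmas~\ref{lemma1} and~\ref{lemma2} with $k_0=\pi$ exactly as you describe. No gaps.
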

\begin{proof}
	By~\eqref{w_represent}, we have to prove the estimate
	\begin{equation*}
		\Bigl\| e^{-i t A_\varepsilon} \widetilde{\Upsilon}^{(-)}_\varepsilon - e^{-i t \varepsilon^{-2} \sigma_{\pi}} \varphi_{\pi}^{\varepsilon} [e^{i \pi \varepsilon^{-1} x}] e^{-i t (-A^{\mathrm{hom}}_{\pi})} \Bigr\|_{H^q (\mathbb{R}) \to L_2(\mathbb{R})} \le C (1+|t|^{q/4}) \varepsilon^{q/2},
	\end{equation*}
	where $t \ne 0$, $0 < \varepsilon |t|^{-1/2} \le \mathfrak{e}$, $0 \le q \le 2$. From~\eqref{Sobolev_isomorhism} and the unitarity of the scaling transformation it follows that
	\begin{multline*}
		\Bigl\| e^{-i t A_\varepsilon}  \widetilde{\Upsilon}^{(-)}_\varepsilon - e^{-i t \varepsilon^{-2} \sigma_{\pi}} \varphi_{\pi}^{\varepsilon} [e^{i \pi \varepsilon^{-1} x}] e^{-i t (-A^{\mathrm{hom}}_{\pi})} \Bigr\|_{H^q (\mathbb{R}) \to L_2(\mathbb{R})} \\ =
		\Bigl\| \Bigl( e^{-i t \varepsilon^{-2} A} \widetilde{\Upsilon}^{(-)}  - e^{-i t \varepsilon^{-2} \sigma_{\pi}} [\varphi_{\pi}] [e^{i \pi x}] e^{-i t \varepsilon^{-2} (-A^{\mathrm{hom}}_{\pi})} \Bigr)  \varepsilon^{q} (- d_x^2 + \varepsilon^{2}I)^{-q/2} \Bigr\|_{L_2 (\mathbb{R}) \to L_2(\mathbb{R})}.
	\end{multline*}
	Here $\widetilde{\Upsilon}^{(-)} = \sum_{j=1}^s \Psi_j^* R'_{s-j+1} \Phi [e^{i \pi x}]$. Next, 
	\begin{align*}
		\Phi^* [\varepsilon^q ((k-\pi)^2 + \varepsilon^2)^{-q/2} ]\Phi [e^{i\pi x}] &= [e^{i\pi x}] \varepsilon^{q} (- d_x^2 + \varepsilon^{2}I)^{-q/2},
		\\
		\Phi^* [e^{i t \varepsilon^{-2} b_{\pi} (k-\pi)^2} \varepsilon^q ((k-\pi)^2 + \varepsilon^2)^{-q/2} ]\Phi [e^{i\pi x}] &= [e^{i\pi x}] e^{-i t \varepsilon^{-2} (-A^{\mathrm{hom}}_{\pi})} \varepsilon^{q} (- d_x^2 + \varepsilon^{2}I)^{-q/2},
	\end{align*}
	whence
		\begin{multline*}
		\Bigl\| \Bigl( e^{-i t \varepsilon^{-2} A} \widetilde{\Upsilon}^{(-)} - e^{-i t \varepsilon^{-2} \sigma_{\pi}} [\varphi_{\pi}] [e^{i \pi x}] e^{-i t \varepsilon^{-2} (-A^{\mathrm{hom}}_{\pi})} \Bigr) \varepsilon^{q} (- d_x^2 + \varepsilon^{2}I)^{-q/2} \Bigr\|_{L_2 (\mathbb{R}) \to L_2(\mathbb{R})} \\ =
		\Bigl\| \Bigl( e^{-i t \varepsilon^{-2} A} \sum_{j=1}^s \Psi_j^* R'_{s-j+1} - e^{-i t \varepsilon^{-2} \sigma_{\pi}} [\varphi_{\pi}] \Phi^* [e^{i t \varepsilon^{-2} b_{\pi} (k-\pi)^2}] \Bigr) [\varepsilon^q ((k-\pi)^2 + \varepsilon^2)^{-q/2} ]\Phi  \Bigr\|_{L_2 (\mathbb{R}) \to L_2(\mathbb{R})}.
	\end{multline*}
	Obviously, $\varepsilon^q ((k-\pi)^2 + \varepsilon^2)^{-q/2} (1 - \chi_\mathfrak{K}(k-\pi)) \le \kappa^{-q} \varepsilon^q$, and so
	\begin{multline*}
		\Bigl\| \Bigl( e^{-i t \varepsilon^{-2} A} \sum_{j=1}^s \Psi_j^* R'_{s-j+1} - e^{-i t \varepsilon^{-2} \sigma_{\pi}} [\varphi_{\pi}] \Phi^* [e^{i t \varepsilon^{-2} b_{\pi} (k-\pi)^2}] \Bigr) \\ \times [\varepsilon^q ((k-\pi)^2 + \varepsilon^2)^{-q/2} (1 - \chi_\mathfrak{K}(k-\pi))]\Phi  \Bigr\|_{L_2 (\mathbb{R}) \to L_2(\mathbb{R})} \le (1 + \|\varphi_{\pi}\|_{L_\infty}) \kappa^{-q} \varepsilon^q.
	\end{multline*}
	Taking into account the equality
	\begin{multline*}
		e^{-i t \varepsilon^{-2} A} \sum_{j=1}^s \Psi_j^* R'_{s-j+1} [\varepsilon^q ((k-\pi)^2 + \varepsilon^2)^{-q/2} \chi_\mathfrak{K}(k-\pi)] \Phi \\ = \Psi^* [e^{-i t \varepsilon^{-2} E(k)} \varepsilon^{q} ((k-\pi)^2 + \varepsilon^{2})^{-q/2} \chi_\mathfrak{K}(k-\pi)] \Phi
	\end{multline*}
	and the estimate (which follows from~\eqref{lemma1_est_1})
	\begin{equation*}
		\label{thrm3_proof_lemma1_appl}
		\| (\Psi^* - [\varphi_{\pi}] \Phi^*) [e^{-i t \varepsilon^{-2} \sigma_{\pi}} e^{i t \varepsilon^{-2} b_{\pi} (k-\pi)^2} \varepsilon^{q} ((k-\pi)^2 + \varepsilon^{2})^{-q/2} \chi_\mathfrak{K}(k-\pi)] \Phi \| \le C \varepsilon^{\min(1, q)},
	\end{equation*}
	we conclude that it remains to estimate the norm
	\begin{equation*}
		\Bigl\| \Psi^*  [(e^{-i t \varepsilon^{-2} E(k)} - e^{-i t \varepsilon^{-2} \sigma_{\pi}} e^{i t \varepsilon^{-2} b_{\pi} (k-\pi)^2})  \varepsilon^{q} ((k-\pi)^2 + \varepsilon^{2})^{-q/2} \chi_\mathfrak{K}(k - \pi)] \Phi \Bigr\|_{L_2 (\mathbb{R}) \to L_2(\mathbb{R})}.
	\end{equation*} 
	By~\eqref{E_powerseries_2}, we have
	\begin{multline*}
		e^{-i t \varepsilon^{-2} E(k)} - e^{-i t \varepsilon^{-2} \sigma_{\pi}} e^{i t \varepsilon^{-2} b_{\pi} (k-\pi)^2} = e^{-i t \varepsilon^{-2} (\sigma_{\pi} - b_{\pi} (k-\pi)^2)} (e^{-i t \varepsilon^{-2} (k-\pi)^4 \gamma_{\pi} (k)} - 1)
		\\ 
		= -e^{-i t \varepsilon^{-2} (\sigma_{\pi} - b_{\pi} (k-\pi)^2)} e^{-\frac{1}{2} i t \varepsilon^{-2} (k-\pi)^4 \gamma_{\pi} (k)} \cdot 2i \sin\left( \frac{1}{2} t \varepsilon^{-2} (k-\pi)^4 \gamma_{\pi} (k) \right).
	\end{multline*}
	Application of estimate~\eqref{lemma2_est_1} together with $\bigl| e^{-i t \varepsilon^{-2} (\sigma_{\pi} - b_{\pi} (k-\pi)^2)} \bigr| = 1$, $\bigl| e^{-\frac{1}{2} i t \varepsilon^{-2} k^4 \gamma_\pi (k)}\bigr| = 1$ completes the proof.	
\end{proof}

Let $f, g \in L_2(\mathbb{R})$. Consider the Cauchy problem for the hyperbolic equation
\begin{equation}
	\label{cond3_Cauchy_problem_hyperb}
	\left\{
	\begin{aligned}
		&\frac{\partial^2}{\partial t^2} z_\varepsilon (x,t) =  (A_\varepsilon z_\varepsilon) (x,t) - \varepsilon^{-2} \sigma_{\pi} z_\varepsilon(x,t),\\
		&z_\varepsilon (x,0) = (\widetilde{\Upsilon}^{(-)}_\varepsilon f)(x), \quad (\partial_t z_\varepsilon) (x,0) = (\widetilde{\Upsilon}^{(-)}_\varepsilon g)(x),
	\end{aligned}
	\right.
\end{equation}
and the corresponding "homogenized" problem
\begin{equation}
	\label{cond3_Cauchy_problem_hyperb_homog}
	\left\{
	\begin{aligned}
		&\frac{\partial^2}{\partial t^2} z_0 (x,t) = - (A^{\textrm{hom}}_\pi z_0) (x,t) ,\\
		&z_0 (x,0) = f (x), \quad (\partial_t z_0) (x,0) = g (x).
	\end{aligned}
	\right.
\end{equation}
The solutions of problems~\eqref{cond3_Cauchy_problem_hyperb} and~\eqref{cond3_Cauchy_problem_hyperb_homog} satisfy the following representations:
\begin{align*}
		&z_\varepsilon = \cos \bigl(t A_{\sigma_\pi, \varepsilon}^{1/2}\bigr) \widetilde{\Upsilon}^{(-)}_\varepsilon f  + A_{\sigma_\pi, \varepsilon}^{-1/2} \sin \bigl(t A_{\sigma_\pi, \varepsilon}^{1/2}\bigr) \widetilde{\Upsilon}^{(-)}_\varepsilon g,
		\\
		&z_0 = \cos (t (A^{\mathrm{hom}}_\pi)^{1/2}) f + (A^{\mathrm{hom}}_\pi)^{-1/2} \sin (t (A^{\mathrm{hom}}_\pi)^{1/2}) g,
\end{align*}
where $A_{\sigma_\pi, \varepsilon} \coloneqq (\varepsilon^{-2}\sigma_\pi - A_\varepsilon) \mathcal{E}_{A_\varepsilon}[0, \varepsilon^{-2} \sigma_{\pi}]$. The following statement can be proved similarly to the proofs of Theorems~\ref{cond1_hyperb_thrm} and~\ref{cond3_Schrod_thrm}.
\begin{thrm}
	\label{cond3_hyperb_thrm}
	Suppose that Condition~\textup{\ref{cond3}} and~\eqref{gamma_ne_0} are fulfilled. Let $z_\varepsilon$ be the solution of problem~\eqref{cond3_Cauchy_problem_hyperb}, and let $z_0$ be the solution of problem~\eqref{cond3_Cauchy_problem_hyperb_homog}. Let $t \ne 0$, $f \in H^{q}(\mathbb{R})$, $g \in H^{r}(\mathbb{R})$. We have
	\begin{align}
		\label{cond3_hyperb_thrm_est}
		&\begin{multlined}[c][0.8\textwidth]
			\| z_\varepsilon(\cdot, t) - \varphi_{\pi}^{\varepsilon} e^{i \pi (\cdot/\varepsilon) }  z_0(\cdot, t)\|_{L_2(\mathbb{R})} \\ \le C \bigl( (1+|t|^{q/3}) \varepsilon^{2q/3} \|f\|_{H^{q}(\mathbb{R})} + (1+|t|^{(r+1)/3}) \varepsilon^{(2r+2)/3}  \|g\|_{H^{r}(\mathbb{R})} \bigr), \\ 0 < \varepsilon |t|^{-1} \le \tilde{\mathfrak{e}}, \quad 0 < \varepsilon \le 1, \quad 0 \le q \le 3/2, \quad 0 \le r \le 1/2,
		\end{multlined}
		\\
		\label{cond3_hyperb_thrm_est_r<0}
		&\begin{multlined}[c][0.8\textwidth]
			\| z_\varepsilon(\cdot, t) - \varphi_{\pi}^{\varepsilon} e^{i \pi (\cdot/\varepsilon) }  z_0(\cdot, t)\|_{L_2(\mathbb{R})} \\ \le C \bigl( (1+|t|^{q/3}) \varepsilon^{2q/3} \|f\|_{H^{q}(\mathbb{R})} + ((1+|t|^{(r+1)/3}) \varepsilon^{(2r+2)/3} + |t|^{1/3} \varepsilon^{2/3}) \|g\|_{H^{r}(\mathbb{R})} \bigr), \\ 0 < \varepsilon |t|^{-1} \le \tilde{\mathfrak{e}}, \quad 0 < \varepsilon \le 1, \quad 0 \le q \le 3/2, \quad -1 \le r < 0,
		\end{multlined}
	\end{align}
	with the constants $C = C(q, r, \kappa, \alpha_0, \beta_0, \beta_1, \sigma_\pi, b_\pi, \| \varphi_{\pi} \|_{L_\infty}, |\tilde{\gamma}_{\pi} (\pi)|, \|\theta_\pi\|_{M_\kappa(\pi)})$.
\end{thrm}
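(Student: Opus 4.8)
The strategy is to combine the reduction used in the proof of Theorem~\ref{cond1_hyperb_thrm} (hyperbolic equation: split into a $\cos$-part acting on $f$ and an $A^{-1/2}\sin$-part acting on $g$) with the $\pi$-edge bookkeeping of the proof of Theorem~\ref{cond3_Schrod_thrm}. First, using the stated representations of $z_\varepsilon$ and $z_0$, it suffices to prove two operator bounds: one for $\cos(tA_{\sigma_\pi,\varepsilon}^{1/2})\widetilde{\Upsilon}^{(-)}_\varepsilon - [\varphi_\pi^\varepsilon][e^{i\pi x/\varepsilon}]\cos(t(A^{\mathrm{hom}}_\pi)^{1/2})$ in the $(H^q\to L_2)$-norm, and one for the analogous difference of the $A^{-1/2}\sin$-operators in the $(H^r\to L_2)$-norm. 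By~\eqref{Sobolev_isomorhism} each $H^s$-norm is turned into an $L_2$-norm at the price of the factor $(-d_x^2+I)^{-s/2}$; the scaling transformation $T_\varepsilon$ then replaces $A_\varepsilon$ by $A$ at "time" $t\varepsilon^{-1}$ and $(-d_x^2+I)^{-s/2}$ by $\varepsilon^s(-d_x^2+\varepsilon^2 I)^{-s/2}$ (with an extra overall factor $\varepsilon$ in the $\sin$-part). Exactly as in Theorem~\ref{cond3_Schrod_thrm}, commuting the phase $[e^{i\pi x/\varepsilon}]$ through $\Phi$ replaces every Floquet multiplier $m(k)$ by $m(k-\pi)$, so the relevant spectral variable is $k-\pi$, matched to the expansions \eqref{E_powerseries_2}, \eqref{phi_powerseries_2}. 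Under Condition~\ref{cond3} the sign in \eqref{E_powerseries_2} is "$-$", hence one works with $(\sigma_\pi - E(k))^{1/2} = b_\pi^{1/2}|k-\pi| - |k-\pi|^3\tilde\gamma_\pi(k)$ from the second displayed formula after~\eqref{sqrt_E_s_series_k=0}.

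Next I would isolate the region away from the edge by inserting $I-\Phi^*[\chi_\mathfrak{K}(k-\pi)]\Phi$. On that region the $\cos$-part is controlled simply by $\varepsilon^q((k-\pi)^2+\varepsilon^2)^{-q/2}(1-\chi_\mathfrak{K}(k-\pi))\le\kappa^{-q}\varepsilon^q$ together with $\|\varphi_\pi\|_{L_\infty}$; for the $\sin$-part one uses $|\sin|\le 1$, the bound $(b_\pi(k-\pi)^2)^{-1/2}(1-\chi_\mathfrak{K}(k-\pi))\le b_\pi^{-1/2}\kappa^{-1}$, and the lower estimate~\eqref{E_l_estimates} combined with the monotonicity facts $6^\circ$–$8^\circ$ to bound $(\sigma_\pi - E_j(k))^{-1/2}$ on the bands $j=1,\dots,s$ away from $k=\pi$; this yields $C\varepsilon$ when $r\ge 0$ and $C\varepsilon^{r+1}$ when $-1\le r<0$, precisely as in Theorem~\ref{cond1_hyperb_thrm}. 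On the near region $\chi_\mathfrak{K}(k-\pi)$, Lemma~\ref{lemma1} lets me replace $\Psi^*$ by $[\varphi_\pi]\Phi^*$ at cost $C\varepsilon^{\min(1,q)}$ in the $\cos$-part (via~\eqref{lemma1_est_1}) and $C\varepsilon^{\min(1,r+1)}$ in the $\sin$-part (via~\eqref{lemma1_est_2}), and also lets me replace $(\sigma_\pi-E(k))^{-1/2}$ by $(b_\pi(k-\pi)^2)^{-1/2}$ in the $\sin$-amplitude, since by~\eqref{sqrt_E_s_series_k=0}, \eqref{delta_fix2}, \eqref{delta_fix3} their difference is $\le C|k-\pi|\chi_\mathfrak{K}(k-\pi)$, which against the surviving weights costs $C\varepsilon^{r+1}$.

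Then the leading terms remain. Using $\cos a - \cos b = -2\sin\frac{a+b}{2}\sin\frac{a-b}{2}$ and $\sin a - \sin b = 2\cos\frac{a+b}{2}\sin\frac{a-b}{2}$ with $a = t\varepsilon^{-1}(\sigma_\pi - E(k))^{1/2}$, $b = t\varepsilon^{-1}b_\pi^{1/2}|k-\pi|$, one has $\frac{a-b}{2} = -\frac12 t\varepsilon^{-1}|k-\pi|^3\tilde\gamma_\pi(k)$; bounding the first factor by $1$ and applying Lemma~\ref{lemma2} gives, for the $\cos$-part, $\frac{C\varepsilon^{2q/3}|t|^{q/3}}{(|\tilde\gamma_\pi(\pi)|^{-2/3}+\varepsilon^{4/3}|t|^{2/3})^{q/2}}\le C(1+|t|^{q/3})\varepsilon^{2q/3}$ by~\eqref{lemma2_est_2}, valid on $0<\varepsilon|t|^{-1}\le\tilde{\mathfrak e}$; and for the $\sin$-part (remembering the extra $\varepsilon$), estimate~\eqref{lemma2_est_3} gives $C(1+|t|^{(r+1)/3})\varepsilon^{(2r+2)/3}$ for $0\le r\le 1/2$, and for $-1\le r<0$ the same term plus the additional summand $C|t|^{1/3}\varepsilon^{2/3}$, which arises exactly from the regime in~\eqref{lemma2_est_3} where $\varepsilon^{4/3}|t|^{2/3}$ dominates the denominator. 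Collecting all contributions yields~\eqref{cond3_hyperb_thrm_est} and~\eqref{cond3_hyperb_thrm_est_r<0}.

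I expect the main obstacle to be the $A^{-1/2}\sin$-term for negative $r$. There the amplitude $(\sigma_\pi - E(k))^{-1/2}$ is singular at the edge $k=\pi$, and one must see that this $|k-\pi|^{-1}$ singularity is precisely compensated by the $|k-\pi|^3$-vanishing of $\sin\frac{a-b}{2}$ — this is the reason Lemma~\ref{lemma2} carries the extra $|k-k_0|^{-1}$ factor in~\eqref{lemma2_est_3} — and that the two incomparable regimes in~\eqref{lemma2_est_3} produce a maximum equal to the stated right-hand side with the separate $|t|^{1/3}\varepsilon^{2/3}$ summand. A secondary, purely bookkeeping, difficulty is carrying the $\pi$-shift, the phase $[e^{i\pi x/\varepsilon}]$, and the "$-$"-sign convention of~\eqref{E_powerseries_2} consistently through the whole Floquet calculus, and handling the far-region bound for $(\sigma_\pi - E_j(k))^{-1/2}$ on all the bands $j=1,\dots,s$ simultaneously.
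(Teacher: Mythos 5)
Your proposal is correct and is exactly the combination the paper itself prescribes: for this theorem the paper gives no separate proof, stating only that it ``can be proved similarly to the proofs of Theorems~\ref{cond1_hyperb_thrm} and~\ref{cond3_Schrod_thrm}'', which is precisely your reduction (the $\cos$/$A^{-1/2}\sin$ splitting and Lemmas~\ref{lemma1}, \ref{lemma2} at $k_0=\pi$, carried through the phase $[e^{i\pi x/\varepsilon}]$ and the ``$-$''-sign of~\eqref{E_powerseries_2}). One small correction to your far-region step: to bound $(\sigma_\pi-E_j(k))^{-1/2}$ for $j=1,\dots,s$ off the edge you need an \emph{upper} bound on $E_j$ strictly below $\sigma_\pi$, not the lower estimate~\eqref{E_l_estimates}; this follows from the ordering and non-overlapping of the one-dimensional bands ($E_j\le \max E_{s-1}\le\sigma_0<\sigma_\pi$ for $j<s$) together with the strict monotonicity $6^\circ$ of $E_s$ on $[0,\pi]$, and since the quasimomentum range $\bigcup_{j=1}^s\widetilde{\Omega}'_{s-j+1}$ is bounded, the large-$|k|$ splitting used in Theorem~\ref{cond1_hyperb_thrm} for $-1\le r<0$ is not even needed here.
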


\subsection{The case where Condition~\ref{cond2} is fulfilled.} 
In $L_2(\mathbb{R})$, consider the operator $A^{\mathrm{hom}}_0 =  - b_0 \frac{d^2}{d x^2}$, $\Dom A^{\mathrm{hom}}_0 = H^2(\mathbb{R})$, which is called \emph{the effective operator at the right edge $\sigma_{0} = E(0)$ of the band $R(E)$}. Let $f, g \in L_2(\mathbb{R})$. Consider the following Cauchy problem for the nonstationary Schr\"{o}dinger equation and the corresponding "homogenized" problem:
\begin{equation}
	\label{cond2_Cauchy_problems_Schrod}
	\left\{
	\begin{aligned}
		&i \frac{\partial}{\partial t} \tilde{u}_\varepsilon (x,t) = (A_\varepsilon \tilde{u}_\varepsilon) (x,t) ,\\
		&\tilde{u}_\varepsilon (x,0) = (\Upsilon^{(-)}_\varepsilon f)(x),
	\end{aligned}
	\right. 
	\quad
	\left\{
	 \begin{aligned}
	 	&i \frac{\partial}{\partial t} \tilde{u}_0 (x,t) = -(A^{\mathrm{hom}}_0 \tilde{u}_0)(x,t),\\
	 	&\tilde{u}_0 (x,0) = f (x),
	 \end{aligned}
	 \right.
\end{equation} 
where
\begin{align*}
	&(\Upsilon^{(-)}_\varepsilon f)(x) \coloneqq (2 \pi)^{-1/2} \int_\mathbb{R} (\Phi f) (k) \sum_{j=1}^{s} e^{ikx} \varphi_j(x/\varepsilon, \varepsilon k) \chi_{\widetilde{\Omega}_{s-j+1}} (\varepsilon k) \, dk,
	\\
	&\Upsilon^{(-)}_\varepsilon f \in \mathcal{E}_{A_\varepsilon}[0, \varepsilon^{-2} \sigma_{0}] L_2(\mathbb{R}),
\end{align*}
and the similar Cauchy problems for the hyperbolic equations:
\begin{equation}
	\label{cond2_Cauchy_problems_hyperb}
	\left\{
	\begin{aligned}
		&\frac{\partial^2}{\partial t^2} \tilde{v}_\varepsilon (x,t) =  (A_\varepsilon \tilde{v}_\varepsilon)(x,t) - \varepsilon^{-2} \sigma_{0} \tilde{v}_\varepsilon(x,t),\\
		&\tilde{v}_\varepsilon (x,0) = (\Upsilon^{(-)}_\varepsilon f)(x), \\ &(\partial_t \tilde{v}_\varepsilon) (x,0) = (\Upsilon^{(-)}_\varepsilon g)(x),
	\end{aligned}
	\right.
	\quad
	\left\{
	\begin{aligned}
		&\frac{\partial^2}{\partial t^2} \tilde{v}_0 (x,t) = -(A^{\mathrm{hom}}_0 \tilde{v}_0)(x,t),\\
		&\tilde{v}_0 (x,0) = f (x), \\ &(\partial_t \tilde{v}_0) (x,0) = g (x).
	\end{aligned}
	\right.
\end{equation}
\begin{thrm}
	\label{cond2_Schrod_thrm}
	Suppose that Condition~\textup{\ref{cond2}} and~\eqref{gamma_ne_0} are fulfilled. Let $\tilde{u}_\varepsilon$, $\tilde{u}_0$ be the solutions of problems~\eqref{cond2_Cauchy_problems_Schrod}, $t \ne 0$, $f \in H^q(\mathbb{R})$, $0 \le q \le 2$. We have
	\begin{equation}
		\label{cond2_Schrod_thrm_est}
		\| \tilde{u}_\varepsilon(\cdot, t) - e^{-i t \varepsilon^{-2} \sigma_{0}} \varphi_{0}^{\varepsilon} \tilde{u}_0(\cdot, t)\|_{L_2(\mathbb{R})} \le C (1+|t|^{q/4}) \varepsilon^{q/2} \|f\|_{H^q(\mathbb{R})}, \qquad 0 < \varepsilon |t|^{-1/2} \le \mathfrak{e},
	\end{equation}
	with the constant $C = C(q, \kappa, \| \varphi_{0} \|_{L_\infty}, |\gamma_{0} (0)|, \|\theta_0\|_{M_\kappa(0)})$.
\end{thrm}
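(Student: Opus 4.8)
The plan is to repeat, almost verbatim, the argument used for Theorem~\ref{cond1_Schrod_thrm}; the only differences are the indexing of the Bloch bands and Brillouin zones (now the bands $j=1,\dots,s$ and the zones $\widetilde{\Omega}_{s-j+1}$, since for even $s$ the point $\sigma_0=E(0)$ is the \emph{right} edge of $R(E)$), and the sign ``$-$'' in front of $b_0k^2$ in~\eqref{E_powerseries_1}, which will turn out to be invisible to all the estimates. First I would record the representations $\tilde u_\varepsilon=e^{-itA_\varepsilon}\Upsilon^{(-)}_\varepsilon f$ and $\tilde u_0=e^{it A^{\mathrm{hom}}_0}f$, where $\Upsilon^{(-)}_\varepsilon:=\sum_{j=1}^s T_\varepsilon^*\Psi_j^*R_{s-j+1}\Phi T_\varepsilon$ and $R_l$ is the restriction to $\widetilde{\Omega}_l$, and note (via the Floquet--Bloch decomposition) that $\Upsilon^{(-)}_\varepsilon$ is a partial isometry of $L_2(\mathbb{R})$ onto $\mathcal{E}_{A_\varepsilon}[0,\varepsilon^{-2}\sigma_0]L_2(\mathbb{R})$. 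Thus~\eqref{cond2_Schrod_thrm_est} amounts to the operator bound
\[
\bigl\| e^{-itA_\varepsilon}\Upsilon^{(-)}_\varepsilon-e^{-it\varepsilon^{-2}\sigma_0}[\varphi_0^\varepsilon]e^{itA^{\mathrm{hom}}_0}\bigr\|_{H^q(\mathbb{R})\to L_2(\mathbb{R})}\le C(1+|t|^{q/4})\varepsilon^{q/2}.
\]
Using~\eqref{Sobolev_isomorhism} and the unitarity of the scaling transformation $T_\varepsilon$ (recall $A_\varepsilon=\varepsilon^{-2}T_\varepsilon^*AT_\varepsilon$), this reduces to estimating
\[
\bigl(e^{-it\varepsilon^{-2}A}\Upsilon^{(-)}-e^{-it\varepsilon^{-2}\sigma_0}[\varphi_0]e^{it\varepsilon^{-2}A^{\mathrm{hom}}_0}\bigr)\varepsilon^q(-d_x^2+\varepsilon^2I)^{-q/2}
\]
in the $(L_2\to L_2)$-norm, with $\Upsilon^{(-)}:=\sum_{j=1}^s\Psi_j^*R_{s-j+1}\Phi$.

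Next I would split by the projection $F_\mathfrak{K}=\Phi^*[\chi_\mathfrak{K}]\Phi$. On $I-F_\mathfrak{K}$ the symbol $\varepsilon^q(k^2+\varepsilon^2)^{-q/2}$ is bounded by $\kappa^{-q}\varepsilon^q$, so this contribution is $\le(1+\|\varphi_0\|_{L_\infty})\kappa^{-q}\varepsilon^q$. On $F_\mathfrak{K}$ I would use the Floquet--Bloch identities (as in~\eqref{thrm_f1}, \eqref{thrm_f2})
\[
e^{-it\varepsilon^{-2}A}\Upsilon^{(-)}\varepsilon^q(-d_x^2+\varepsilon^2I)^{-q/2}F_\mathfrak{K}=\Psi^*[e^{-it\varepsilon^{-2}E(k)}\varepsilon^q(k^2+\varepsilon^2)^{-q/2}\chi_\mathfrak{K}(k)]\Phi,
\]
and analogously for the $[\varphi_0]$-term with the symbol $e^{it\varepsilon^{-2}b_0k^2}$, then add and subtract $e^{-it\varepsilon^{-2}\sigma_0}e^{it\varepsilon^{-2}b_0k^2}$. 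By Lemma~\ref{lemma1}, estimate~\eqref{lemma1_est_1}, the piece containing $\Psi^*-[\varphi_0]\Phi^*$ is $O(\varepsilon^{\min(1,q)})$. For the remaining piece I would invoke the ``$-$''-version of~\eqref{E_powerseries_1}, $E(k)=\sigma_0-b_0k^2+k^4\gamma_0(k)$, to write
\[
e^{-it\varepsilon^{-2}E(k)}-e^{-it\varepsilon^{-2}\sigma_0}e^{it\varepsilon^{-2}b_0k^2}=-e^{-it\varepsilon^{-2}(\sigma_0-b_0k^2)}e^{-\frac12 it\varepsilon^{-2}k^4\gamma_0(k)}\cdot 2i\sin\!\Bigl(\tfrac12 t\varepsilon^{-2}k^4\gamma_0(k)\Bigr),
\]
drop the two unimodular exponential prefactors, and apply the upper bound in Lemma~\ref{lemma2}, estimate~\eqref{lemma2_est_1} (with $k_0=0$, $\gamma_{k_0}=\gamma_0$), which gives $C\varepsilon^{q/2}|t|^{q/4}(|\gamma_0(0)|^{-1/2}+\varepsilon|t|^{1/2})^{-q/2}\le C(1+|t|^{q/4})\varepsilon^{q/2}$. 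Summing the three contributions yields~\eqref{cond2_Schrod_thrm_est}.

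I do not expect a genuine obstacle: the whole analytic content is already packaged in Lemmas~\ref{lemma1} and~\ref{lemma2}, which are insensitive to the sign of $b_0k^2$ (that sign only passes into the arguments of unimodular exponentials, hence disappears upon taking moduli). The one point that needs a moment's care is the bookkeeping at the start --- verifying that, for even $s$ with $\sigma_0=E(0)$ the top edge of the band $R(E)$, the operator $\Upsilon^{(-)}_\varepsilon$ assembled from the bands $j=1,\dots,s$ and the zones $\widetilde{\Omega}_{s-j+1}$ is the partial isometry onto $\mathcal{E}_{A_\varepsilon}[0,\varepsilon^{-2}\sigma_0]L_2(\mathbb{R})$, so that after scaling and inserting $F_\mathfrak{K}$ the Floquet--Bloch identity isolates exactly the $s$-th band $E(k)=E_s(k)$ near $k=0$, which is where expansion~\eqref{E_powerseries_1} holds.
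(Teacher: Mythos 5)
Your proposal is correct and follows exactly the route the paper intends for this theorem (which it states without proof, referring to the argument for Condition~\ref{cond1}): the same reduction via \eqref{Sobolev_isomorhism}, scaling, the splitting by $F_\mathfrak{K}$, Lemma~\ref{lemma1}, and the upper bound in \eqref{lemma2_est_1}. You correctly identify the only points requiring care — the re-indexing of bands and zones so that $\chi_\mathfrak{K}$ isolates $E_s$ near $k=0$, and the fact that the sign change $E(k)=\sigma_0-b_0k^2+k^4\gamma_0(k)$ is compensated by the sign in the homogenized equation $i\partial_t\tilde u_0=-A^{\mathrm{hom}}_0\tilde u_0$, so the difference of exponentials again reduces to $\sin\bigl(\tfrac12 t\varepsilon^{-2}k^4\gamma_0(k)\bigr)$ times unimodular factors.
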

\begin{thrm}
	\label{cond2_hyperb_thrm}
	Suppose that Condition~\textup{\ref{cond2}} and~\eqref{gamma_ne_0} are fulfilled. Let $\tilde{v}_\varepsilon$, $\tilde{v}_0$ be the solutions of problems~\eqref{cond2_Cauchy_problems_hyperb}, and let $t \ne 0$, $f \in H^{q}(\mathbb{R})$, $g \in H^{r}(\mathbb{R})$. We have
	\begin{align}
		\label{cond2_hyperb_thrm_est}
		&\begin{multlined}[c][0.8\textwidth]
			\| \tilde{v}_\varepsilon(\cdot, t) - \varphi_0^{\varepsilon}  \tilde{v}_0(\cdot, t)\|_{L_2(\mathbb{R})} \\ \le C \bigl( (1+|t|^{q/3}) \varepsilon^{2q/3} \|f\|_{H^{q}(\mathbb{R})} + (1+|t|^{(r+1)/3}) \varepsilon^{(2r+2)/3}  \|g\|_{H^{r}(\mathbb{R})} \bigr), \\ 0 < \varepsilon |t|^{-1} \le \tilde{\mathfrak{e}}, \quad 0 < \varepsilon \le 1, \quad 0 \le q \le 3/2, \quad 0 \le r \le 1/2,
		\end{multlined}
		\\
		\label{cond2_hyperb_thrm_est_r<0}
		&\begin{multlined}[c][0.8\textwidth]
			\| \tilde{v}_\varepsilon(\cdot, t) - \varphi_0^{\varepsilon}  \tilde{v}_0(\cdot, t)\|_{L_2(\mathbb{R})} \\ \le C \bigl( (1+|t|^{q/3}) \varepsilon^{2q/3} \|f\|_{H^{q}(\mathbb{R})} + ((1+|t|^{(r+1)/3}) \varepsilon^{(2r+2)/3} + |t|^{1/3} \varepsilon^{2/3}) \|g\|_{H^{r}(\mathbb{R})} \bigr), \\ 0 < \varepsilon |t|^{-1} \le \tilde{\mathfrak{e}}, \quad 0 < \varepsilon \le 1, \quad 0 \le q \le 3/2, \quad -1 \le r < 0,
		\end{multlined}
	\end{align}
	with the constants $C = C(q, r, \kappa, \alpha_0, \beta_0, \beta_1, \sigma_{0}, b_0, \| \varphi_{0} \|_{L_\infty}, |\tilde{\gamma}_{0} (0)|, \|\theta_0\|_{M_\kappa(0)})$.
\end{thrm}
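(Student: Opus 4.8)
The plan is to imitate the proofs of Theorems~\ref{cond1_hyperb_thrm} and~\ref{cond3_Schrod_thrm} essentially verbatim. The only structural changes are that $\sigma_0 = E(0)$ is now a \emph{right} (upper) edge of the band $R(E)$, so the ``$-$'' sign is taken in the expansion~\eqref{E_powerseries_1} and one uses the square-root expansion $(\sigma_0 - E(k))^{1/2} = b_0^{1/2}|k| - |k|^3\tilde{\gamma}_0(k)$ (the variant valid under Condition~\ref{cond2}), and that the initial data lie in $\mathcal{E}_{A_\varepsilon}[0,\varepsilon^{-2}\sigma_0]L_2(\mathbb{R})$ rather than in $\mathcal{E}_{A_\varepsilon}[\varepsilon^{-2}\sigma_0,\infty)L_2(\mathbb{R})$.

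First I would record the solution representations. On the $A_\varepsilon$-invariant subspace $\mathcal{E}_{A_\varepsilon}[0,\varepsilon^{-2}\sigma_0]L_2$ the operator $\varepsilon^{-2}\sigma_0 - A_\varepsilon$ is nonnegative, so, putting $A_{\sigma_0,\varepsilon} \coloneqq (\varepsilon^{-2}\sigma_0 - A_\varepsilon)\mathcal{E}_{A_\varepsilon}[0,\varepsilon^{-2}\sigma_0]$, the solution of~\eqref{cond2_Cauchy_problems_hyperb} is $\tilde{v}_\varepsilon = \cos(t A_{\sigma_0,\varepsilon}^{1/2})\Upsilon^{(-)}_\varepsilon f + A_{\sigma_0,\varepsilon}^{-1/2}\sin(t A_{\sigma_0,\varepsilon}^{1/2})\Upsilon^{(-)}_\varepsilon g$, while $\tilde{v}_0 = \cos(t(A^{\mathrm{hom}}_0)^{1/2})f + (A^{\mathrm{hom}}_0)^{-1/2}\sin(t(A^{\mathrm{hom}}_0)^{1/2})g$. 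As in Theorem~\ref{cond1_hyperb_thrm}, estimates~\eqref{cond2_hyperb_thrm_est}, \eqref{cond2_hyperb_thrm_est_r<0} then reduce to the operator-norm bound
\[
\bigl\| \cos(t A_{\sigma_0,\varepsilon}^{1/2})\Upsilon^{(-)}_\varepsilon - [\varphi_0^\varepsilon]\cos(t(A^{\mathrm{hom}}_0)^{1/2}) \bigr\|_{H^q \to L_2} \le C(1+|t|^{q/3})\varepsilon^{2q/3}, \qquad 0 \le q \le 3/2,
\]
and to the corresponding $(H^r\to L_2)$-bounds for $A_{\sigma_0,\varepsilon}^{-1/2}\sin(t A_{\sigma_0,\varepsilon}^{1/2})\Upsilon^{(-)}_\varepsilon - [\varphi_0^\varepsilon](A^{\mathrm{hom}}_0)^{-1/2}\sin(t(A^{\mathrm{hom}}_0)^{1/2})$, treated separately for $0\le r\le 1/2$ and for $-1\le r<0$.

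Next I would apply the Sobolev isomorphism~\eqref{Sobolev_isomorhism} and the unitarity of the scaling transformation to pass to the operator $A$, the multiplier $\varepsilon^q(-d_x^2 + \varepsilon^2 I)^{-q/2}$, and $\Upsilon^{(-)} = \sum_{j=1}^s \Psi_j^* R_{s-j+1}\Phi$, and then split off $F_\mathfrak{K} = \Phi^*[\chi_\mathfrak{K}]\Phi$. Since $\chi_\mathfrak{K}\chi_{\widetilde{\Omega}_m} = 0$ for $m\ge 2$, only the band $E = E_s$ near $k_0 = 0$ survives on the $F_\mathfrak{K}$-part, where $\sigma_0 - E(k) = b_0 k^2 - k^4\gamma_0(k)$. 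Using the identities
\[
\cos a - \cos b = -2\sin\tfrac{a+b}{2}\sin\tfrac{a-b}{2}, \qquad \sin a - \sin b = 2\cos\tfrac{a+b}{2}\sin\tfrac{a-b}{2},
\]
with $a = t\varepsilon^{-1}(\sigma_0 - E(k))^{1/2}$, $b = t\varepsilon^{-1}(b_0 k^2)^{1/2}$, so that $\tfrac12(a-b) = -\tfrac12 t\varepsilon^{-1}|k|^3\tilde{\gamma}_0(k)$, one bounds the half-sum factor by $1$ and applies the upper estimate~\eqref{lemma2_est_2} to the cosine difference and~\eqref{lemma2_est_3} to the sine difference (after replacing $(\sigma_0 - E(k))^{-1/2}$ by $(b_0 k^2)^{-1/2}$ at the cost of $|(\sigma_0 - E(k))^{-1/2} - (b_0 k^2)^{-1/2}| \le C|k|$, exactly as in Theorem~\ref{cond1_hyperb_thrm}); the residual $(\Psi^* - [\varphi_0]\Phi^*)(\cdots)$ is absorbed by Lemma~\ref{lemma1}. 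Taking into account the extra factor $\varepsilon$ in front of the sine terms, this yields precisely the powers $\varepsilon^{2q/3}(1+|t|^{q/3})$, $\varepsilon^{(2r+2)/3}(1+|t|^{(r+1)/3})$, and, for $-1\le r<0$, the additional $|t|^{1/3}\varepsilon^{2/3}$, just as in Theorem~\ref{cond1_hyperb_thrm}.

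The most delicate bookkeeping — and essentially the only obstacle — is the $(I-F_\mathfrak{K})$-contribution of the sine operator for $-1\le r<0$, where $\varepsilon^r(k^2+\varepsilon^2)^{-r/2}$ is unbounded and $(\sigma_0 - E_j(k))^{-1/2}$ has to be controlled for each $j = 1,\dots,s$. Here the situation is actually simpler than in Theorem~\ref{cond1_hyperb_thrm}, because the sum $\Upsilon^{(-)}$ is \emph{finite}: for $j<s$ the one-dimensional band ordering and the gap hypothesis of Condition~\ref{cond2} give $\sigma_0 - E_j(k) \ge \sigma_0 - \sup R(E_{s-1}) = \sigma_0 - E_{s-1}(\pi) > 0$ uniformly in $k$ (the last inequality strict because $b_0 > 0$ forces $E_s$ to be non-constant, so $E_{s-1}(\pi) \le E_s(\pi) < E_s(0) = \sigma_0$), while for $j = s$ one has $\sigma_0 - E_s(k) \ge b_0 k^2 - k^4|\gamma_0(k)| \ge c > 0$ on $|k|\ge\kappa$ by the choice~\eqref{delta_fix3} of $\kappa$ and the monotonicity of $E_s$ on $[0,\pi]$. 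Combining these lower bounds with the elementary estimates for $\varepsilon^r(k^2+\varepsilon^2)^{-r/2}$ and $|k|^{-1}(k^2+\varepsilon^2)^{-r/2}$ already used in Theorem~\ref{cond1_hyperb_thrm} bounds this contribution by $C\varepsilon^{r+1}$, which is dominated by the right-hand sides of~\eqref{cond2_hyperb_thrm_est}, \eqref{cond2_hyperb_thrm_est_r<0}; for the cosine and for $r\ge 0$ the same reasoning gives the simpler bounds $O(\varepsilon^q)$ and $O(\varepsilon)$, since then $\varepsilon^r(k^2+\varepsilon^2)^{-r/2}\le 1$. As no new phenomenon arises relative to Conditions~\ref{cond1} and~\ref{cond3}, this completes the proof.
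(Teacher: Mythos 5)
Your proposal is correct and follows exactly the route the paper intends for Condition~\ref{cond2} (the paper omits this proof as analogous to Theorems~\ref{cond1_hyperb_thrm} and~\ref{cond3_Schrod_thrm}): the same reduction via~\eqref{Sobolev_isomorhism}, scaling, the split by $F_\mathfrak{K}$, Lemmas~\ref{lemma1} and~\ref{lemma2}, and the replacement $(\sigma_0-E(k))^{-1/2}\to(b_0k^2)^{-1/2}$. You also correctly identify and resolve the only point where the argument genuinely differs, namely the $(I-F_\mathfrak{K})$-contribution for $-1\le r<0$, where the finite sum over $j=1,\dots,s$ and the uniform lower bound $\sigma_0-E_j(k)\ge\sigma_0-E_s(\kappa)>0$ replace the paper's large-$|k|$ argument via~\eqref{E_l_estimates}.
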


\subsection{The case where Condition~\ref{cond4} is fulfilled.} In $L_2(\mathbb{R})$, consider the operator $A^{\mathrm{hom}}_\pi =  - b_\pi \frac{d^2}{d x^2}$, $\Dom A^{\mathrm{hom}}_\pi = H^2(\mathbb{R})$, which is called \emph{the effective operator at the left edge $\sigma_{\pi} = E(\pi)$ of the band $R(E)$}. Let $f, g \in L_2(\mathbb{R})$. Consider the following Cauchy problem for the nonstationary Schr\"{o}dinger equation and the corresponding "homogenized" problem:
\begin{equation}
	\label{cond4_Cauchy_problems_Schrod}
	\left\{
	\begin{aligned}
		&i \frac{\partial}{\partial t} \tilde{w}_\varepsilon (x,t) = (A_\varepsilon \tilde{w}_\varepsilon) (x,t) ,\\
		&\tilde{w}_\varepsilon (x,0) = (\widetilde{\Upsilon}^{(+)}_\varepsilon f)(x),
	\end{aligned}
	\right.
	\quad
	\left\{
	\begin{aligned}
		&i \frac{\partial}{\partial t} \tilde{w}_0 (x,t) = (A^{\mathrm{hom}}_\pi \tilde{w}_0) (x,t) ,\\
		&\tilde{w}_0 (x,0) = f (x),
	\end{aligned}
	\right.	
\end{equation} 
where
\begin{align*}
	&(\widetilde{\Upsilon}^{(+)}_\varepsilon f)(x) \coloneqq (2 \pi)^{-1/2} \int_\mathbb{R} (\Phi f) (k - \varepsilon^{-1} \pi) \sum_{j=s}^{\infty} e^{ikx} \varphi_j(x/\varepsilon, \varepsilon k) \chi_{\widetilde{\Omega}'_{j-s+1}} (\varepsilon k) \, dk,
	\\
	&\widetilde{\Upsilon}^{(+)}_\varepsilon f \in \mathcal{E}_{A_\varepsilon}[\varepsilon^{-2} \sigma_{\pi}, \infty) L_2(\mathbb{R}),
\end{align*}
and the similar Cauchy problems for the hyperbolic equations:
\begin{equation}
	\label{cond4_Cauchy_problems_hyperb}
	\left\{
	\begin{aligned}
		&\frac{\partial^2}{\partial t^2}  \tilde{z}_\varepsilon (x,t) =  -(A_\varepsilon \tilde{z}_\varepsilon)(x,t) + \varepsilon^{-2} \sigma_\pi \tilde{z}_\varepsilon(x,t),\\
		&\tilde{z}_\varepsilon (x,0) = (\widetilde{\Upsilon}^{(+)}_\varepsilon f)(x), \\ &(\partial_t \tilde{z}_\varepsilon) (x,0) = (\widetilde{\Upsilon}^{(+)}_\varepsilon g)(x), 
	\end{aligned}
	\right.
	\quad
	\left\{
	\begin{aligned}
		&\frac{\partial^2}{\partial t^2} \tilde{z}_0 (x,t) = - (A^{\mathrm{hom}}_\pi \tilde{z}_0) (x,t) ,\\
		&\tilde{z}_0 (x,0) = f (x), \\ &(\partial_t \tilde{z}_0) (x,0) = g (x).
	\end{aligned}
	\right.
\end{equation}
\begin{thrm}
	\label{cond4_Schrod_thrm}	
	Suppose that Condition~\textup{\ref{cond4}} and~\eqref{gamma_ne_0} are fulfilled. Let $\tilde{w}_\varepsilon$, $\tilde{w}_0$ be the solutions of problems~\eqref{cond4_Cauchy_problems_Schrod}, and let $t \ne 0$, $f \in H^q(\mathbb{R})$, $0 \le q \le 2$. We have
	\begin{equation}
		\label{cond4_Schrod_thrm_est}
		\begin{multlined}[c][0.8\textwidth]
			\| \tilde{w}_\varepsilon(\cdot, t) - e^{-i t \varepsilon^{-2} \sigma_\pi} \varphi_{\pi}^{\varepsilon} e^{i \pi (\cdot/\varepsilon) } \tilde{w}_0(\cdot, t)\|_{L_2(\mathbb{R})} \le C (1+|t|^{q/4}) \varepsilon^{q/2} \|f\|_{H^q(\mathbb{R})}, \\ 0 < \varepsilon |t|^{-1/2} \le \mathfrak{e},
		\end{multlined}
	\end{equation}
	with the constant $C = C(q, \kappa, \| \varphi_{\pi} \|_{L_\infty}, |\gamma_{\pi} (\pi)|, \|\theta_\pi\|_{M_\kappa(\pi)})$.
\end{thrm}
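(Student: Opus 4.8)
The plan is to derive Theorem~\ref{cond4_Schrod_thrm} by merging the scheme of Theorem~\ref{cond1_Schrod_thrm} (band numbers $j \ge s$, a \emph{left} threshold, the spectral projection onto $\mathcal{E}_{A_\varepsilon}[\varepsilon^{-2}\sigma_{\pi},\infty)$) with the device of Theorem~\ref{cond3_Schrod_thrm} (threshold at $k_0 = \pi$, removed by conjugation with the rapidly oscillating factor $[e^{i\pi x/\varepsilon}]$). First I would record the representations $\tilde w_\varepsilon = e^{-itA_\varepsilon}\widetilde{\Upsilon}^{(+)}_\varepsilon f$, $\tilde w_0 = e^{-itA^{\mathrm{hom}}_\pi}f$, where the sign of $A^{\mathrm{hom}}_\pi$ in~\eqref{cond4_Cauchy_problems_Schrod} is "$+$" because under Condition~\ref{cond4} the point $\sigma_{\pi} = E(\pi)$ is a \emph{left} edge of the band $R(E)$ (fact~$8^{\circ}$), and reduce~\eqref{cond4_Schrod_thrm_est} to the operator bound
\begin{equation*}
  \bigl\| e^{-itA_\varepsilon}\widetilde{\Upsilon}^{(+)}_\varepsilon - e^{-it\varepsilon^{-2}\sigma_{\pi}}[\varphi_{\pi}^{\varepsilon}]\,[e^{i\pi\varepsilon^{-1}x}]\,e^{-itA^{\mathrm{hom}}_\pi}\bigr\|_{H^q(\mathbb{R})\to L_2(\mathbb{R})} \le C(1+|t|^{q/4})\varepsilon^{q/2}.
\end{equation*}
By the isometric isomorphism~\eqref{Sobolev_isomorhism} and the unitarity of the scaling transformation $T_\varepsilon$, this passes to the $\varepsilon$-scale exactly as in the proof of Theorem~\ref{cond1_Schrod_thrm}, so it suffices to bound the $(L_2(\mathbb{R})\to L_2(\mathbb{R}))$-norm of $\bigl(e^{-it\varepsilon^{-2}A}\widetilde{\Upsilon}^{(+)} - e^{-it\varepsilon^{-2}\sigma_{\pi}}[\varphi_{\pi}][e^{i\pi x}]e^{-it\varepsilon^{-2}A^{\mathrm{hom}}_\pi}\bigr)\varepsilon^q(-d_x^2+\varepsilon^{2}I)^{-q/2}$, where $\widetilde{\Upsilon}^{(+)} = \sum_{j=s}^{\infty}\Psi_j^* R'_{j-s+1}\Phi[e^{i\pi x}]$.

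Next, following the opening steps of the proof of Theorem~\ref{cond3_Schrod_thrm}, I would conjugate by $[e^{i\pi x}]$ to convert multipliers in $k$ into multipliers in $k-\pi$: one has $\Phi^*[\varepsilon^q((k-\pi)^2+\varepsilon^2)^{-q/2}]\Phi[e^{i\pi x}] = [e^{i\pi x}]\varepsilon^q(-d_x^2+\varepsilon^{2}I)^{-q/2}$ and $\Phi^*[e^{-it\varepsilon^{-2}b_\pi(k-\pi)^2}\varepsilon^q((k-\pi)^2+\varepsilon^2)^{-q/2}]\Phi[e^{i\pi x}] = [e^{i\pi x}]e^{-it\varepsilon^{-2}A^{\mathrm{hom}}_\pi}\varepsilon^q(-d_x^2+\varepsilon^{2}I)^{-q/2}$; note that the exponent in the second identity now carries the sign "$-$" (corresponding to the "$+$" sign in~\eqref{E_powerseries_2} under Condition~\ref{cond4}), opposite to the Condition~\ref{cond3} case. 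Splitting the multiplier $\varepsilon^q((k-\pi)^2+\varepsilon^2)^{-q/2}$ via $\chi_\mathfrak{K}(k-\pi)$ and its complement and using $\varepsilon^q((k-\pi)^2+\varepsilon^2)^{-q/2}(1-\chi_\mathfrak{K}(k-\pi)) \le \kappa^{-q}\varepsilon^q$ together with the partial-isometry/unitarity of all the other factors, the complementary part contributes at most $(1+\|\varphi_{\pi}\|_{L_\infty})\kappa^{-q}\varepsilon^q \le C\varepsilon^{q/2}$ for $0<\varepsilon\le1$, $0\le q\le2$.

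On the part localized by $\chi_\mathfrak{K}(k-\pi)$, I would use the identity $e^{-it\varepsilon^{-2}A}\sum_{j=s}^{\infty}\Psi_j^* R'_{j-s+1}[\varepsilon^q((k-\pi)^2+\varepsilon^2)^{-q/2}\chi_\mathfrak{K}(k-\pi)]\Phi = \Psi^*[e^{-it\varepsilon^{-2}E(k)}\varepsilon^q((k-\pi)^2+\varepsilon^2)^{-q/2}\chi_\mathfrak{K}(k-\pi)]\Phi$ — on $\widetilde{\Omega}'_{j-s+1}$ only the $j$-th band enters, and since $\widetilde{\Omega}'_m$ is disjoint from $\{k:|k-\pi|\le\kappa\}$ for $m\ge 2$ (as $\kappa<\pi$), only $j=s$ survives, after which the expansion~\eqref{E_powerseries_2} of $E=E_s$ near $\pi$, valid with the "$+$" sign under Condition~\ref{cond4}, applies. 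Estimate~\eqref{lemma1_est_1} with $k_0=\pi$ then gives $\|(\Psi^*-[\varphi_{\pi}]\Phi^*)[e^{-it\varepsilon^{-2}\sigma_{\pi}}e^{-it\varepsilon^{-2}b_\pi(k-\pi)^2}\varepsilon^q((k-\pi)^2+\varepsilon^2)^{-q/2}\chi_\mathfrak{K}(k-\pi)]\Phi\| \le C\varepsilon^{\min(1,q)}\le C\varepsilon^{q/2}$, so there remains $\|\Psi^*[(e^{-it\varepsilon^{-2}E(k)}-e^{-it\varepsilon^{-2}\sigma_{\pi}}e^{-it\varepsilon^{-2}b_\pi(k-\pi)^2})\varepsilon^q((k-\pi)^2+\varepsilon^2)^{-q/2}\chi_\mathfrak{K}(k-\pi)]\Phi\|$. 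By~\eqref{E_powerseries_2} (with the "$+$" sign) this phase difference factors as $-e^{-it\varepsilon^{-2}(\sigma_{\pi}+b_\pi(k-\pi)^2)}e^{-\frac12 it\varepsilon^{-2}(k-\pi)^4\gamma_\pi(k)}\cdot 2i\sin\bigl(\tfrac12 t\varepsilon^{-2}(k-\pi)^4\gamma_\pi(k)\bigr)$; since $\Psi^*$ is a contraction, the two exponentials are unimodular, and $(|\gamma_\pi(\pi)|^{-1/2}+\varepsilon|t|^{1/2})^{-q/2}\le|\gamma_\pi(\pi)|^{q/4}$, the upper bound in~\eqref{lemma2_est_1} with $k_0=\pi$ yields $\le C\varepsilon^{q/2}|t|^{q/4}\le C(1+|t|^{q/4})\varepsilon^{q/2}$. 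Collecting the three contributions gives~\eqref{cond4_Schrod_thrm_est}, with a constant of the stated form.

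I do not expect a genuine obstacle here: the proof is a bookkeeping hybrid of the already-proved Theorems~\ref{cond1_Schrod_thrm} and~\ref{cond3_Schrod_thrm}. The only points demanding attention are the sign bookkeeping forced by Condition~\ref{cond4} (the "$+$" in~\eqref{E_powerseries_2}, hence the "$+$" sign of $A^{\mathrm{hom}}_\pi$ in the homogenized equation and the "$-$" exponent $e^{-it\varepsilon^{-2}b_\pi(k-\pi)^2}$ produced by the conjugation identity, opposite to the Condition~\ref{cond3} case) and the support argument showing that after the cutoff $\chi_\mathfrak{K}(k-\pi)$ precisely the $s$-th band remains; apart from this, the scaling reduction, Lemma~\ref{lemma1}, the trigonometric factorization, and the sharp bound of Lemma~\ref{lemma2} are imported unchanged.
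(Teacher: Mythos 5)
Your proof is correct and is exactly the argument the paper intends: the paper explicitly omits the proofs for Conditions~\ref{cond2} and~\ref{cond4}, and your write-up is the straightforward hybrid of the detailed proofs of Theorems~\ref{cond1_Schrod_thrm} and~\ref{cond3_Schrod_thrm} (bands $j \ge s$ with the conjugation by $[e^{i\pi x/\varepsilon}]$ shifting the threshold to $k_0 = \pi$), with the sign bookkeeping for~\eqref{E_powerseries_2} and the support argument isolating the $s$-th band handled correctly.
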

\begin{thrm}
	\label{cond4_hyperb_thrm}
	Suppose that Condition~\textup{\ref{cond4}} and~\eqref{gamma_ne_0} are fulfilled. Let $\tilde{z}_\varepsilon$, $\tilde{z}_0$ be the solutions of problems~\eqref{cond4_Cauchy_problems_hyperb}, and let $t \ne 0$, $f \in H^{q}(\mathbb{R})$, $g \in H^{r}(\mathbb{R})$. We have
	\begin{align}
		\label{cond4_hyperb_thrm_est}
		&\begin{multlined}[c][0.8\textwidth]
			\| \tilde{z}_\varepsilon(\cdot, t) - \varphi_{\pi}^{\varepsilon} e^{i \pi (\cdot/\varepsilon) }  \tilde{z}_0(\cdot, t) \|_{L_2(\mathbb{R})} \\ \le C \bigl( (1+|t|^{q/3}) \varepsilon^{2q/3} \|f\|_{H^{q}(\mathbb{R})} + (1+|t|^{(r+1)/3}) \varepsilon^{(2r+2)/3}  \|g\|_{H^{r}(\mathbb{R})} \bigr), \\ 0 < \varepsilon |t|^{-1} \le \tilde{\mathfrak{e}}, \quad 0 < \varepsilon \le 1, \quad 0 \le q \le 3/2, \quad 0 \le r \le 1/2,
		\end{multlined}
		\\
		\label{cond4_hyperb_thrm_est_r<0}
		&\begin{multlined}[c][0.8\textwidth]
			\| \tilde{z}_\varepsilon(\cdot, t) - \varphi_{\pi}^{\varepsilon} e^{i \pi (\cdot/\varepsilon) }  \tilde{z}_0(\cdot, t)\|_{L_2(\mathbb{R})} \\ \le C \bigl( (1+|t|^{q/3}) \varepsilon^{2q/3} \|f\|_{H^{q}(\mathbb{R})} + ((1+|t|^{(r+1)/3}) \varepsilon^{(2r+2)/3} + |t|^{1/3} \varepsilon^{2/3}) \|g\|_{H^{r}(\mathbb{R})} \bigr), \\ 0 < \varepsilon |t|^{-1} \le \tilde{\mathfrak{e}}, \quad 0 < \varepsilon \le 1, \quad 0 \le q \le 3/2, \quad -1 \le r < 0,
		\end{multlined}
	\end{align}
	with the constants $C = C(q, r, \kappa, \alpha_0, \beta_0, \beta_1, \sigma_\pi, b_\pi, \| \varphi_{\pi} \|_{L_\infty}, |\tilde{\gamma}_{\pi} (\pi)|, \|\theta_\pi\|_{M_\kappa(\pi)})$.
\end{thrm}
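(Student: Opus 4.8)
The plan is to mimic the proofs of Theorems~\ref{cond1_hyperb_thrm} and~\ref{cond3_Schrod_thrm}: Condition~\ref{cond4} produces a left band edge $\sigma_\pi$, the ``$+$''\nobreakdash-sign in~\eqref{E_powerseries_2}, the effective operator $A^{\mathrm{hom}}_\pi = -b_\pi d_x^2$, and the hyperbolic equation $\partial_t^2\tilde{z}_\varepsilon = -A_\varepsilon\tilde{z}_\varepsilon + \varepsilon^{-2}\sigma_\pi\tilde{z}_\varepsilon$, so the hyperbolic machinery is the same as for Condition~\ref{cond1}; the only new feature is that the edge sits at the quasimomentum $k_0=\pi$, which is absorbed by the $e^{i\pi\varepsilon^{-1}x}$\nobreakdash-conjugation device of Theorem~\ref{cond3_Schrod_thrm}. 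First I would record the operator representations
\begin{equation*}
	\tilde{z}_\varepsilon = \cos\bigl(t A_{\sigma_\pi,\varepsilon}^{1/2}\bigr)\widetilde{\Upsilon}^{(+)}_\varepsilon f + A_{\sigma_\pi,\varepsilon}^{-1/2}\sin\bigl(t A_{\sigma_\pi,\varepsilon}^{1/2}\bigr)\widetilde{\Upsilon}^{(+)}_\varepsilon g, \qquad \tilde{z}_0 = \cos\bigl(t (A^{\mathrm{hom}}_\pi)^{1/2}\bigr) f + (A^{\mathrm{hom}}_\pi)^{-1/2}\sin\bigl(t (A^{\mathrm{hom}}_\pi)^{1/2}\bigr) g,
\end{equation*}
with $A_{\sigma_\pi,\varepsilon}\coloneqq(A_\varepsilon-\varepsilon^{-2}\sigma_\pi)\mathcal{E}_{A_\varepsilon}[\varepsilon^{-2}\sigma_\pi,\infty)$, thereby reducing~\eqref{cond4_hyperb_thrm_est}, \eqref{cond4_hyperb_thrm_est_r<0} to $(H^q\to L_2)$- and $(H^r\to L_2)$-bounds for the ``cosine'' operator $\cos(tA_{\sigma_\pi,\varepsilon}^{1/2})\widetilde{\Upsilon}^{(+)}_\varepsilon - [\varphi_{\pi}^{\varepsilon}][e^{i\pi\varepsilon^{-1}x}]\cos(t(A^{\mathrm{hom}}_\pi)^{1/2})$ and its ``sine'' analogue.

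Using~\eqref{Sobolev_isomorhism} and the unitarity of $T_\varepsilon$, as in~\eqref{cos_norm_reduction}--\eqref{sin_norm_reduction}, I would pass to the rescaled operators, e.g. $\bigl(\cos(t\varepsilon^{-1}A_{\sigma_\pi}^{1/2})\widetilde{\Upsilon}^{(+)} - [\varphi_{\pi}][e^{i\pi x}]\cos(t\varepsilon^{-1}(A^{\mathrm{hom}}_\pi)^{1/2})\bigr)\varepsilon^q(-d_x^2+\varepsilon^2 I)^{-q/2}$ for the cosine part (and the sine part with $\varepsilon^r(-d_x^2+\varepsilon^2 I)^{-r/2}$ and a front factor $\varepsilon$), where $A_{\sigma_\pi}\coloneqq(A-\sigma_\pi)\mathcal{E}_A[\sigma_\pi,\infty)$ and $\widetilde{\Upsilon}^{(+)}=\sum_{j=s}^\infty\Psi_j^*R'_{j-s+1}\Phi[e^{i\pi x}]$. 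Exactly as in the proof of Theorem~\ref{cond3_Schrod_thrm}, the identities $\Phi^*[\varepsilon^q((k-\pi)^2+\varepsilon^2)^{-q/2}]\Phi[e^{i\pi x}]=[e^{i\pi x}]\varepsilon^q(-d_x^2+\varepsilon^2 I)^{-q/2}$ and $\Phi^*[\cos(t\varepsilon^{-1}(b_\pi(k-\pi)^2)^{1/2})\,\cdot\,]\Phi[e^{i\pi x}]=[e^{i\pi x}]\cos(t\varepsilon^{-1}(A^{\mathrm{hom}}_\pi)^{1/2})\,\cdot\,$ (and the corresponding one for $(A^{\mathrm{hom}}_\pi)^{-1/2}\sin(\cdot)$) push $[e^{i\pi x}]$ past the Fourier multipliers and replace the symbol $k$ by $k-\pi$; the cut-off then becomes $\chi_\mathfrak{K}(k-\pi)$, and $\cos(t\varepsilon^{-1}A_{\sigma_\pi}^{1/2})\widetilde{\Upsilon}^{(+)}$ is turned into $\Psi^*[\cos(t\varepsilon^{-1}(E(k)-\sigma_\pi)^{1/2})\,\cdot\,]\Phi$ (and likewise for the sine).

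I would then split by $\chi_\mathfrak{K}(k-\pi)$ and $1-\chi_\mathfrak{K}(k-\pi)$. On the complement $1-\chi_\mathfrak{K}(k-\pi)$ the cosine part is bounded by $(1+\|\varphi_{\pi}\|_{L_\infty})\kappa^{-q}\varepsilon^q$ via $\varepsilon^q((k-\pi)^2+\varepsilon^2)^{-q/2}(1-\chi_\mathfrak{K}(k-\pi))\le\kappa^{-q}\varepsilon^q$; for the sine part I would repeat the argument of~\eqref{sin_(I-F)_est}--\eqref{sin_(1-chi)_est}, using $|\sin x|\le1$, the elementary bounds on $(E_j(k)-\sigma_\pi)^{-1/2}$, $(b_\pi(k-\pi)^2)^{-1/2}$ and $\varepsilon^r((k-\pi)^2+\varepsilon^2)^{-r/2}$ away from $k=\pi$, and the lower bounds~\eqref{E_l_estimates} together with the periodicity of $\{E_j\}$ (which guarantee $(E_j(k)-\sigma_\pi)^{-1/2}\le C$ on $\widetilde{\Omega}'_{j-s+1}$ for $|k-\pi|$ large and all $j\ge s$), producing a contribution $O(\varepsilon^{\min(r+1,1)})$. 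On $\chi_\mathfrak{K}(k-\pi)$ I would replace $\Psi^*$ by $[\varphi_{\pi}]\Phi^*$ using Lemma~\ref{lemma1}: estimate~\eqref{lemma1_est_1} for the cosine part, estimate~\eqref{lemma1_est_2} for the sine part (for the latter also using $\bigl|(E(k)-\sigma_\pi)^{-1/2}-(b_\pi(k-\pi)^2)^{-1/2}\bigr|\chi_\mathfrak{K}(k-\pi)\le C|k-\pi|$, which follows from~\eqref{sqrt_E_s_series_k=0}, \eqref{delta_fix2}, \eqref{delta_fix3} and~\eqref{E_powerseries_2}).

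It remains to control the truly oscillatory differences $\cos(t\varepsilon^{-1}(E(k)-\sigma_\pi)^{1/2})-\cos(t\varepsilon^{-1}(b_\pi(k-\pi)^2)^{1/2})$ and $\sin(t\varepsilon^{-1}(E(k)-\sigma_\pi)^{1/2})-\sin(t\varepsilon^{-1}(b_\pi(k-\pi)^2)^{1/2})$ on $\chi_\mathfrak{K}(k-\pi)$. Using $\cos A-\cos B=-2\sin\tfrac{A+B}{2}\sin\tfrac{A-B}{2}$ and $\sin A-\sin B=2\cos\tfrac{A+B}{2}\sin\tfrac{A-B}{2}$, bounding the $\sin\tfrac{A+B}{2}$, resp. $\cos\tfrac{A+B}{2}$, factor by $1$, inserting $(E(k)-\sigma_\pi)^{1/2}-(b_\pi(k-\pi)^2)^{1/2}=|k-\pi|^3\tilde{\gamma}_{\pi}(k)$ from~\eqref{sqrt_E_s_series_k=0}, and applying the upper estimates~\eqref{lemma2_est_2} (cosine part, parameter $q$) and~\eqref{lemma2_est_3} (sine part, parameter $r$) with $k_0=\pi$, one gets the bounds $C\varepsilon^{2q/3}|t|^{q/3}(|\tilde{\gamma}_{\pi}(\pi)|^{-2/3}+\varepsilon^{4/3}|t|^{2/3})^{-q/2}$ and $C\varepsilon^{(2r+2)/3}|t|^{(r+1)/3}|\tilde{\gamma}_{\pi}(\pi)|^{1/3}(|\tilde{\gamma}_{\pi}(\pi)|^{-2/3}+\varepsilon^{4/3}|t|^{2/3})^{-r/2}$; dominating $(|\tilde{\gamma}_{\pi}(\pi)|^{-2/3}+\varepsilon^{4/3}|t|^{2/3})^{-r/2}$ by the sum of its two powers yields, for $r<0$, the additional term $C|t|^{1/3}\varepsilon^{2/3}$ of~\eqref{cond4_hyperb_thrm_est_r<0}, while for $r\ge0$ it is absorbed into $C(1+|t|^{(r+1)/3})\varepsilon^{(2r+2)/3}$. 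Collecting all contributions gives the theorem. I do not expect a genuinely new obstacle: Lemmas~\ref{lemma1} and~\ref{lemma2}, the hyperbolic splitting of Theorem~\ref{cond1_hyperb_thrm}, and the $\pi$-shift conjugation of Theorem~\ref{cond3_Schrod_thrm} supply every ingredient; the only real care is needed in tracking the factor $[e^{i\pi\varepsilon^{-1}x}]$ and the shifted Brillouin zones $\widetilde{\Omega}'_{j-s+1}$ through the scaling, and in verifying that~\eqref{E_l_estimates} still bounds the high-$|k|$ tail of the sine operator uniformly over the bands $j\ge s$.
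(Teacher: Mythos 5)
Your proposal is correct and follows exactly the route the paper intends: the paper only formulates Theorem~\ref{cond4_hyperb_thrm} and indicates that it is proved by combining the hyperbolic splitting of Theorem~\ref{cond1_hyperb_thrm} with the $e^{i\pi x/\varepsilon}$-conjugation and shifted Brillouin zones of Theorem~\ref{cond3_Schrod_thrm}, which is precisely what you carry out, including the correct use of~\eqref{sqrt_E_s_series_k=0} (valid under Condition~\ref{cond4}), Lemmas~\ref{lemma1} and~\ref{lemma2} with $k_0=\pi$, and the origin of the extra $|t|^{1/3}\varepsilon^{2/3}$ term for $r<0$.
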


\section{Concluding remarks}
\subsection{} Under the assumptions of Theorems~\ref{cond1_Schrod_thrm}, \ref{cond3_Schrod_thrm},  \ref{cond2_Schrod_thrm},  \ref{cond4_Schrod_thrm} for $q=2$ and under the assumptions of Theorems~\ref{cond1_hyperb_thrm}, \ref{cond3_hyperb_thrm}, \ref{cond2_hyperb_thrm},  \ref{cond4_hyperb_thrm}  for $q=3/2$, $r=1/2$ we have the error estimates of order $O((1+|t|^{1/2}) \varepsilon)$. The first power of $\varepsilon$ is order-sharp.  

\subsection{} Estimates~\eqref{cond1_hyperb_thrm_est_r<0}, \eqref{cond3_hyperb_thrm_est_r<0}, \eqref{cond2_hyperb_thrm_est_r<0}, \eqref{cond4_hyperb_thrm_est_r<0} are of interest, when the right-hand sides are small (i.e., when $\varepsilon |t|^{1/2}$ is small). If $\varepsilon |t|^{1/2} \le 1$, then the norms in the left-hand sides of~\eqref{cond1_hyperb_thrm_est_r<0}, \eqref{cond3_hyperb_thrm_est_r<0}, \eqref{cond2_hyperb_thrm_est_r<0}, \eqref{cond4_hyperb_thrm_est_r<0} are estimated by 
\begin{equation*}
	C \bigl( (1+|t|^{q/3}) \varepsilon^{2q/3} \|f\|_{H^{q}(\mathbb{R})} + ((1+|t|^{(r+1)/3}) \varepsilon^{(2r+2)/3}) \|g\|_{H^{r}(\mathbb{R})} \bigr)
\end{equation*}
under the same assumptions.

\subsection{} With the help of the lower estimates formulated in Lemma~\ref{lemma2}, it can be proved that the results of Theorems~\ref{cond1_Schrod_thrm}--\ref{cond4_hyperb_thrm} are sharp with respect to the norm type as well as with respect to the dependence on $t$ (for large $t$). Let us show that for the example of the result of Theorem~\ref{cond1_Schrod_thrm} (for $q=2$).
\begin{thrm}
	\label{cond1_Schr_thrm_shrp_thrm}
	Suppose that Condition~\textup{\ref{cond1}} and~\eqref{gamma_ne_0} are fulfilled. Let $u_\varepsilon$ be the solution of problem~\eqref{cond1_Cauchy_problem_Schrod}, and let $u_0$ be the solution of problem~\eqref{cond1_Cauchy_problem_Schrod_homog}. Let $t \ne 0$ and $0 \le q' < 2$. Then there does not exist a constant $\mathcal{C}(t)>0$ such that the estimate
	\begin{equation}
			\label{cond1_Schr_thrm_shrp_est}
			\| u_\varepsilon(\cdot, t) - e^{-i t \varepsilon^{-2} \sigma_0} \varphi_0^{\varepsilon}  u_0(\cdot, t)\|_{L_2(\mathbb{R})} \le \mathcal{C}(t) \varepsilon \|f\|_{H^{q'}(\mathbb{R})}
	\end{equation}
	holds for all sufficiently small $\varepsilon >0$.
\end{thrm}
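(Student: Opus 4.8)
The plan is to argue by contradiction, reducing \eqref{cond1_Schr_thrm_shrp_est} to an operator-norm statement as in the proof of Theorem~\ref{cond1_Schrod_thrm}, isolating the oscillating ``$\sin$'' term, and estimating its norm from below by Lemma~\ref{lemma2}. First I would note that it is enough to treat $0 < q' < 2$: since $\|f\|_{L_2(\mathbb{R})} \le \|f\|_{H^1(\mathbb{R})}$, an estimate of the form \eqref{cond1_Schr_thrm_shrp_est} with $q' = 0$ implies the same estimate with $q' = 1$, so non-existence of $\mathcal{C}(t)$ for $q' = 1$ gives non-existence for $q' = 0$.

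Now fix $t \ne 0$, $q' \in (0,2)$, and assume for contradiction that \eqref{cond1_Schr_thrm_shrp_est} holds with some $\mathcal{C}(t)$ for all small $\varepsilon > 0$. By \eqref{u_represent}, \eqref{Sobolev_isomorhism} and the unitarity of the scaling transformation (cf.~\eqref{thrm1_proof_scale_transf}), this is equivalent to $\| \mathcal{D}_\varepsilon \|_{L_2(\mathbb{R}) \to L_2(\mathbb{R})} \le \mathcal{C}(t)\varepsilon$, where
\begin{equation*}
	\mathcal{D}_\varepsilon \coloneqq \bigl( e^{-i t \varepsilon^{-2} A} \Upsilon^{(+)} - e^{-i t \varepsilon^{-2} \sigma_0} [\varphi_0] e^{-i t \varepsilon^{-2} A^{\mathrm{hom}}_0} \bigr) \varepsilon^{q'} (- d_x^2 + \varepsilon^{2}I)^{-q'/2}.
\end{equation*}
Since $F_\mathfrak{K}$ is an orthogonal projection, $\| \mathcal{D}_\varepsilon \| \ge \| \mathcal{D}_\varepsilon F_\mathfrak{K} \|$, and by \eqref{thrm_f1}, \eqref{thrm_f2} and \eqref{exp-exp_sin} one has $\mathcal{D}_\varepsilon F_\mathfrak{K} = D_1 + D_2$ with
\begin{align*}
	D_1 &= \Psi^* \bigl[ \bigl(e^{-i t \varepsilon^{-2} E(k)} - e^{-i t \varepsilon^{-2} \sigma_0} e^{-i t \varepsilon^{-2} b_0 k^2}\bigr) \varepsilon^{q'} (k^2 + \varepsilon^{2})^{-q'/2} \chi_\mathfrak{K}(k) \bigr] \Phi, \\
	D_2 &= e^{-i t \varepsilon^{-2} \sigma_0}\,(\Psi^* - [\varphi_0]\Phi^*)\bigl[ e^{-i t \varepsilon^{-2} b_0 k^2} \varepsilon^{q'} (k^2 + \varepsilon^{2})^{-q'/2} \chi_\mathfrak{K}(k) \bigr] \Phi.
\end{align*}
Here $\| D_2 \| \le C \varepsilon^{\min(1,q')}$ by \eqref{lemma1_est_1} --- this is exactly \eqref{thrm1_proof_lemma1_appl} with $q$ replaced by $q'$. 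For $D_1$, note that $\Psi^*$ is an isometry and the symbol is supported in $\mathfrak{K} \subset \widetilde{\Omega}$, so $\| D_1 \|$ coincides with the $L_\infty$-norm of that symbol, which by \eqref{exp-exp_sin} equals $2\,\bigl\| \varepsilon^{q'}(k^2 + \varepsilon^2)^{-q'/2}\bigl|\sin\bigl(\tfrac12 t \varepsilon^{-2} k^4 \gamma_0(k)\bigr)\bigr| \chi_\mathfrak{K}(k) \bigr\|_{L_\infty}$, i.e.\ twice the quantity estimated in Lemma~\ref{lemma2}. Applying the \emph{lower} bound in \eqref{lemma2_est_1} (with $k_0 = 0$ and $q$ replaced by $q'$) gives, for $0 < \varepsilon |t|^{-1/2} \le \mathfrak{e}$,
\begin{equation*}
	\| D_1 \| \ge c_1(q')\, \varepsilon^{q'/2} |t|^{q'/4} \bigl( |\gamma_0(0)|^{-1/2} + \varepsilon |t|^{1/2} \bigr)^{-q'/2}, \qquad c_1(q') > 0.
\end{equation*}

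To conclude, I would fix $t$ and let $\varepsilon \to 0^+$. Once $\varepsilon |t|^{1/2} \le |\gamma_0(0)|^{-1/2}$ (and $\varepsilon \le \mathfrak{e}|t|^{1/2}$), the last bound yields $\| D_1 \| \ge c(t, q', |\gamma_0(0)|)\, \varepsilon^{q'/2}$, while $\| D_2 \| = o(\varepsilon^{q'/2})$ because $\min(1,q') > q'/2$ for $q' \in (0,2)$; hence $\| \mathcal{D}_\varepsilon \| \ge \| D_1 \| - \| D_2 \| \ge \tfrac12 c(t,q',|\gamma_0(0)|)\, \varepsilon^{q'/2}$ for all small $\varepsilon$. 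Together with $\| \mathcal{D}_\varepsilon \| \le \mathcal{C}(t)\varepsilon$ this forces $\tfrac12 c(t,q',|\gamma_0(0)|) \le \mathcal{C}(t)\,\varepsilon^{1 - q'/2}$ for all small $\varepsilon$, which is impossible since $1 - q'/2 > 0$. I expect the only genuinely delicate point to be the identity $\| D_1 \| = \|\text{symbol}\|_{L_\infty}$: this is where one needs that $\Psi^*$ is an \emph{isometry} (not merely a contraction) and that the symbol is supported in the fundamental domain $\widetilde{\Omega}$; everything else is bookkeeping, since the two hard estimates (the lower bound in Lemma~\ref{lemma2} and the corrector bound in Lemma~\ref{lemma1}) are already in hand.
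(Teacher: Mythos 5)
Your proposal is correct and follows essentially the same route as the paper: reduce to an operator-norm bound, split off the corrector term via Lemma~\ref{lemma1}, use that $\Psi^*$ acts isometrically on $L_2(\widetilde{\Omega})\supset L_2(\mathfrak{K})$ to identify $\|D_1\|$ with the $L_\infty$-norm of the symbol, and invoke the lower bound in~\eqref{lemma2_est_1}. The only cosmetic difference is that the paper reduces to $1\le q'<2$ so that all error terms are $O(\varepsilon)$, whereas you treat all $q'\in(0,2)$ directly by checking $\min(1,q')>q'/2$; both are valid.
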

\begin{proof}
	We prove by contradiction. Without loss of generality, it suffices to assume that $1 \le q' < 2$. Suppose that~\eqref{cond1_Schr_thrm_shrp_est} is valid. By~\eqref{u_represent}, it means that the estimate
	\begin{equation*}
		\Bigl\| e^{-i t A_\varepsilon} \Upsilon^{(+)}_\varepsilon - e^{-i t \varepsilon^{-2} \sigma_0} [\varphi_0^{\varepsilon}] e^{-i t A^{\mathrm{hom}}_0} \Bigr\|_{H^{q'} (\mathbb{R}) \to L_2(\mathbb{R})} \le \mathcal{C}(t) \varepsilon
	\end{equation*}
	holds for all sufficiently small $\varepsilon$. 
	By virtue of~\eqref{Sobolev_isomorhism}--\eqref{exp-exp_sin} (with $q$ replaced by $q'$), we conclude that 
	\begin{multline*}
		\left\| \Psi^* \left[2i e^{-i t \varepsilon^{-2} (\sigma_0 + b_0 k^2 + (1/2) k^4 \gamma_0 (k))} \sin\left( \frac{1}{2} t \varepsilon^{-2} k^4 \gamma_0 (k) \right)  \varepsilon^{q'} (k^2 + \varepsilon^{2})^{-q'/2} \chi_\mathfrak{K}(k)\right] \Phi \right\|_{L_2 (\mathbb{R}) \to L_2(\mathbb{R})} \\  \le \check{\mathcal{C}}(t) \varepsilon
	\end{multline*}
	with a constant $\check{\mathcal{C}}(t) > 0$ for all sufficiently small $\varepsilon$. Note that the initial space of the operator $\Psi^*$ coincides with $\Ran \Psi = L_2(\widetilde{\Omega})$. Obviously, the range of the operator $$\left[2i e^{-i t \varepsilon^{-2} (\sigma_0 + b_0 k^2 + (1/2) k^4 \gamma_0 (k))} \sin\left( \frac{1}{2} t \varepsilon^{-2} k^4 \gamma_0 (k) \right)  \varepsilon^{q'} (k^2 + \varepsilon^{2})^{-q'/2} \chi_\mathfrak{K}(k)\right] \Phi$$ is contained in the initial space of $\Psi^*$. Together with lower estimate~\eqref{lemma2_est_1}, this yields
	\begin{equation*}
		\label{cond1_Schr_thrm_shrp_proof_f1}
		\frac{\varepsilon^{q'/2} |t|^{q'/4}}{(|\gamma_0 (0)|^{-1/2} + \varepsilon |t|^{1/2})^{q'/2}} \le \check{\mathcal{C}}(t) \varepsilon
	\end{equation*}
	for all sufficiently small $\varepsilon$. But this is not true if $q' < 2$. This contradiction completes the proof.
\end{proof}

\begin{thrm}
	Suppose that Condition~\textup{\ref{cond1}} and~\eqref{gamma_ne_0} are fulfilled. Let $u_\varepsilon$ be the solution of problem~\eqref{cond1_Cauchy_problem_Schrod}, and let $u_0$ be the solution of problem~\eqref{cond1_Cauchy_problem_Schrod_homog}. Let $q' \ge 2$. Then there does not exist a function $\mathcal{C}(t) > 0$ such that $\lim_{t \to \infty} \mathcal{C}(t)/|t|^{1/2} = 0$ and estimate~\eqref{cond1_Schr_thrm_shrp_est} holds for all $t \in \mathbb{R}$ and all sufficiently small $\varepsilon > 0$. 
\end{thrm}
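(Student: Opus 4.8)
I would argue by contradiction, re‑using the reduction from the proof of Theorem~\ref{cond1_Schr_thrm_shrp_thrm} (which never uses the restriction $q'<2$) and adding one new idea: a coupling of $\varepsilon$ with $t$. Assume a function $\mathcal{C}(t)>0$ with $\lim_{t\to\infty}\mathcal{C}(t)/|t|^{1/2}=0$ exists for which \eqref{cond1_Schr_thrm_shrp_est} holds for all $t\ne 0$ and all sufficiently small $\varepsilon>0$. Using~\eqref{u_represent}, the Sobolev isometry~\eqref{Sobolev_isomorhism}, the scaling transformation~\eqref{thrm1_proof_scale_transf}, the cutoff $F_\mathfrak{K}$, and the bounds~\eqref{thrm_(I-F)_est} and~\eqref{thrm1_proof_lemma1_appl} for the discarded pieces — which are $O(\varepsilon)$ because $q'\ge 2>1$, with constant depending only on $q'$, $\kappa$, $\|\varphi_0\|_{L_\infty}$, $\|\theta_0\|_{M_\kappa(0)}$ — together with~\eqref{thrm_f1}, \eqref{thrm_f2}, the identity~\eqref{exp-exp_sin}, and the fact that $\Psi^*$ is a partial isometry onto $L_2(\widetilde{\Omega})\supset L_2(\mathfrak{K})$, I obtain (exactly as in that proof, with $q$ replaced by $q'$)
\begin{equation*}
	\Bigl\|\varepsilon^{q'}(k^2+\varepsilon^2)^{-q'/2}\sin\!\Bigl(\tfrac12\,t\varepsilon^{-2}k^4\gamma_0(k)\Bigr)\chi_\mathfrak{K}(k)\Bigr\|_{L_\infty}\le c_0\bigl(\mathcal{C}(t)+C\bigr)\varepsilon
\end{equation*}
for every $t\ne 0$ and every sufficiently small $\varepsilon>0$; note that the right‑hand side, divided by $|t|^{1/2}$, still tends to $0$ as $t\to\infty$.

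The heart of the matter is to exploit this inequality together with the lower bound~\eqref{lemma2_est_1} of Lemma~\ref{lemma2} (again with $q$ replaced by $q'$). If $q'=2$ it suffices to fix $t$ and let $\varepsilon\to 0^+$: the left‑hand side is $\asymp\varepsilon|t|^{1/2}\bigl(|\gamma_0(0)|^{-1/2}+\varepsilon|t|^{1/2}\bigr)^{-1}$ for $0<\varepsilon|t|^{-1/2}\le\mathfrak{e}$, so dividing by $\varepsilon$ and passing to the limit gives $\mathcal{C}(t)+C\ge c_1|\gamma_0(0)|^{1/2}|t|^{1/2}$. If $2<q'\le 4$ the limit $\varepsilon\to 0$ yields nothing, and instead I would take $\varepsilon=\varepsilon(t):=(q'/2-1)|\gamma_0(0)|^{-1/2}|t|^{-1/2}$, the point maximizing $\varepsilon\mapsto\varepsilon^{q'/2-1}\bigl(|\gamma_0(0)|^{-1/2}+\varepsilon|t|^{1/2}\bigr)^{-q'/2}$; for $|t|$ large this $\varepsilon(t)$ lies below the admissible smallness threshold and satisfies $\varepsilon(t)|t|^{-1/2}\le\mathfrak{e}$, and evaluating the maximum produces $\mathcal{C}(t)+C\ge c_2(q')\,|\gamma_0(0)|^{1/2}|t|^{1/2}$ for all large $|t|$. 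If $q'>4$, the left‑hand side is $\asymp|\gamma_0(0)|\,\varepsilon^2|t|$ with no restriction on $\varepsilon$, so fixing any fixed small admissible $\varepsilon$ already gives $\mathcal{C}(t)+C\ge c_3|\gamma_0(0)|\,\varepsilon\,|t|$, which grows faster than $|t|^{1/2}$. In every case $\liminf_{t\to\infty}\mathcal{C}(t)/|t|^{1/2}>0$, contradicting the hypothesis.

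The main obstacle — and the reason this is not an immediate corollary of the reduction in Theorem~\ref{cond1_Schr_thrm_shrp_thrm} — is the regime $q'>2$: one cannot extract a lower bound on $\mathcal{C}(t)$ by sending $\varepsilon\to 0$ with $t$ fixed, since the relevant multiplier norm then vanishes, and the correct move is to choose $\varepsilon$ of the critical size $|t|^{-1/2}$, i.e.\ the value at which the two terms $|\gamma_0(0)|^{-1/2}$ and $\varepsilon|t|^{1/2}$ in the denominator of the Lemma~\ref{lemma2} bound balance. A point to be careful about is that this choice is legitimate only if the smallness of $\varepsilon$ assumed in the statement is uniform in $t$ (so that $\varepsilon\sim|t|^{-1/2}$ is admissible once $|t|$ is large enough); I would make that reading explicit at the outset. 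The subcase $q'>4$ is then the easiest, because Lemma~\ref{lemma2} supplies there an unconditional two‑sided bound which already forces at least linear growth of $\mathcal{C}(t)$.
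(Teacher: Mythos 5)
Your argument is correct and follows the paper's own route: argue by contradiction, rerun the reduction of Theorem~\ref{cond1_Schr_thrm_shrp_thrm} (which indeed nowhere uses $q'<2$), and then play the resulting inequality against the lower bound of Lemma~\ref{lemma2} with $\varepsilon$ coupled to $t$ at the critical scale $\varepsilon\sim|t|^{-1/2}$ --- the paper simply substitutes $\varepsilon=|t|^{-1/2}$ in every subcase instead of optimizing over $\varepsilon$ as you do for $2<q'\le 4$. One small caveat on your $q'>4$ subcase: the bound $\asymp|\tilde{\gamma}|\,\varepsilon^2|t|$ of Lemma~\ref{lemma2} cannot be literally unconditional (the multiplier is bounded by $1$ in $L_\infty$, so its derivation implicitly needs $\varepsilon^2|t|$ bounded), hence keeping $\varepsilon^2|t|$ of order one --- as the choice $\varepsilon=|t|^{-1/2}$ does --- is safer than fixing $\varepsilon$ and letting $t\to\infty$.
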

\begin{proof} We also prove by contradiction. Repeating the same arguments that were used in the proof of Theorem~\ref{cond1_Schr_thrm_shrp_thrm}, we obtain the estimates
	\begin{alignat*}{2}
		\frac{\varepsilon^{q'/2} |t|^{q'/4}}{(|\gamma_0 (0)|^{-1/2} + \varepsilon |t|^{1/2})^{q'/2}} &\le \check{\mathcal{C}}(t) \varepsilon, & \qquad &\text{for} \quad 2 \le q' \le 4,
		\\
		|\gamma_{0} (0)| \varepsilon^2 |t| &\le \check{\mathcal{C}}(t) \varepsilon, & &\text{for} \quad q' > 4,
	\end{alignat*}
	for $t \ne 0$ and all sufficiently small $\varepsilon$, which can be rewritten as
	\begin{alignat*}{2}
		\frac{\varepsilon^{q'/2-1} |t|^{q'/4-1/2}}{(|\gamma_0 (0)|^{-1/2} + \varepsilon |t|^{1/2})^{q'/2}} &\le \frac{\check{\mathcal{C}}(t)}{|t|^{1/2}}, & \qquad &\text{for} \quad 2 \le q' \le 4,
		\\
		|\gamma_{0} (0)| \varepsilon |t|^{1/2} &\le \frac{\check{\mathcal{C}}(t)}{|t|^{1/2}}, & &\text{for} \quad q' > 4,
	\end{alignat*}
where $\check{\mathcal{C}}(t)$ is a positive function such that $\lim_{t \to \infty} \check{\mathcal{C}}(t)/ |t|^{1/2} = 0$.  But this estimate is not true for large $|t|$ and sufficiently small $\varepsilon = |t|^{-1/2}$. This contradiction completes the proof.
\end{proof}

\subsection{} Finally, throughout the paper we have assumed that~\eqref{gamma_ne_0} is fulfilled. This is the generic case. Now suppose that
\begin{equation*}
	\gamma_{k_0} (k) = (k - k_0)^{2m} \check{\gamma}_{k_0} (k), \qquad \check{\gamma}_{k_0} (k_0) \ne 0, \quad m \ge 1.
\end{equation*} 
Then the statements of Theorems~\ref{cond1_Schrod_thrm}--\ref{cond4_hyperb_thrm} can be improved. In particular, for $f \in H^{q_1}(\mathbb{R})$, $q_1 = (m+2)/(m+1)$, the norms in the left-hand sides of~\eqref{cond1_Schrod_thrm_est}, \eqref{cond3_Schrod_thrm_est}, \eqref{cond2_Schrod_thrm_est}, \eqref{cond4_Schrod_thrm_est} are estimated by
\begin{equation*}
	C (1 + |t|^{1/(2m+2)}) \varepsilon \|f\|_{H^{q_1}(\mathbb{R})};
\end{equation*}
and for $f \in H^{q_2}(\mathbb{R})$, $g \in H^{q_3}(\mathbb{R})$,  $q_2 = (2m+3)/(2m+2)$, $q_3 = 1/(2m+2)$, the norms in the left-hand sides of~\eqref{cond1_hyperb_thrm_est}, \eqref{cond3_hyperb_thrm_est}, \eqref{cond2_hyperb_thrm_est}, \eqref{cond4_hyperb_thrm_est} are estimated by
\begin{equation*}
	C (1 + |t|^{1/(2m+2)}) \varepsilon (\|f\|_{H^{q_2}(\mathbb{R})} + \|g\|_{H^{q_3}(\mathbb{R})}).
\end{equation*}

\end{document}